\tikzset{
  >={Latex[width=2mm,length=2mm]},
            base/.style = {rectangle, rounded corners, draw=black,
                           minimum width=3.5cm, minimum height=1cm,
                           text centered},
}
\newtheorem{lemma}{Lemma}[section]
\newtheorem{theorem}[lemma]{Theorem}
\newtheorem{definition}[lemma]{Definition}
\newtheorem{corollary}[lemma]{Corollary}
\theoremstyle{remark}
\newtheorem{remark}[lemma]{Remark}
\newtheorem{example}[lemma]{Example}
\newcommand{\eps}{{\varepsilon}}
\newcommand{\R}{{\mathbb R}}
\newcommand{\Z}{{\mathbb Z}}
\newcommand{\N}{{\mathbb N}}
\renewcommand{\P}{\mathcal{P}}
\newcommand{\Int}{\mathrm{int}}
\newcommand{\Ext}{\mathrm{ext}}
\renewcommand{\theenumi}{\roman{enumi}}\renewcommand{\labelenumi}{(\theenumi)} 
\title{Symplectic billiards for pairs of polygons}
\author{Peter Albers, Fabian Lander, and Jannik M. Westermann}
\address{Institut f\"ur Mathematik, Universit\"at Heidelberg,
Im Neuenheimer Feld 205, 69120 Heidelberg, Germany}
\email{peter.albers@uni-heidelberg.de}
\email{fabian.lander@mis.mpg.de}
\email{jmwesterm@gmail.com}
\date{\today}
\begin{document}

\begin{abstract} 
We introduce symplectic billiards for pairs of possibly non-convex polygons. After establishing basic properties, we give several criteria on pairs of polygons for the symplectic billiard map to be fully periodic, i.e.~\textit{every} orbit is periodic. The first fully periodic examples were discovered by Albers--Tabachnikov \cite{Albers_Tabachnikov_Introducing_symplectic_billiards} and Albers--Banhatti--Sadlo--Schwartz--Tabachnikov in \cite{Albers_Banhatti_Sadlo_Schwartz_Tabachnikov_2025}. Our criteria allow us to construct a plethora of new examples. Moreover, we provide an example of a pair of polygons where the symplectic billiard map is fully periodic while having orbits of arbitrarily large period. After giving a class of examples which  provably have isolated periodic orbits (and are thus not fully periodic) we exhibit the first example without any periodic orbits at all. It is open whether having no periodic orbits at all is possible in the single polygon setting. Finally, we prove that if one replaces polygons by smooth, strictly convex curves then there are always infinitely many periodic orbits.
\end{abstract}

\maketitle

\section{Introduction}

In this article we introduce symplectic billiards for pairs of polygons which, in addition, are allowed to be non-convex. We briefly treat the case of pairs of smooth, strictly convex curves at the end.  Symplectic billiards was studied first in \cite{Albers_Tabachnikov_Introducing_symplectic_billiards} for smooth, strictly convex curves and convex polygons in the plane. The higher dimensional case was treated there as well. In \cite{Albers_Banhatti_Sadlo_Schwartz_Tabachnikov_2025}, the polygonal case was investigated in great detail, in particular, using computer simulations. In \cite{Schwartz_Symplectic_tiling} Schwartz introduced a version of tiling billiards stemming from the symplectic reflection rule.

There are several motivations to extend the definition of symplectic billiards from one billiard table to a pair of tables. It was already recognized in \cite{Albers_Tabachnikov_Introducing_symplectic_billiards} that the even and odd parts of a symplectic billiard trajectory separately have interesting properties. For instance, recording the even / odd parts of a symplectic billiard trajectory in two copies of the table exhibits hidden symmetries, see the proof of Theorems 8 and 9 in \cite{Albers_Tabachnikov_Introducing_symplectic_billiards}. This is exactly symplectic billiards on two copies of the same table. Allowing a pair of (in general different) tables is a natural generalization as we will explain below. 

Another motivation comes from Minkowski billiards, first studied by Gutkin--Tabachnikov in \cite{Gutkin_Tabachnikov_Billiards_in_Finsler}, i.e.~billiards where the reflection rule on the ``table'' is with respect to a Minkowski metric, that is, a Finsler metric which is constant. Thus a Minkowski metric is described by one ``unit ball''. Minkowski billiards has a very symmetric description where the role of the table and that of the unit ball are interchangeable, see Definition 2.9, Remark 2.10 and Figure 1 in the article \cite{Artstein_Avidan_Ostrover_Bounds_for_Minkowski_billiard} by Artstein-Avidan--Ostrover. 

Finally, if one considers symplectic billiards on an arbitrary table in $\R^2$ together with the standard unit disk (as second table) then it turns out that the induced map on the arbitrary table is simply Euclidean billiards. In other words, Euclidean billiards is a very special case of symplectic billiards on a pair of tables. This observation will be pursued in future work.

The above reasons motivated us to extend the definition of symplectic billiards to pairs of tables. At the same time we explore an extension to non-convex tables. This new point of view allows us to systematically prove certain experimentally made observations from \cite{Albers_Banhatti_Sadlo_Schwartz_Tabachnikov_2025} but also to  construct many interesting new examples. The definition for pairs of tables and non-convex polygons works verbatim in the higher dimensional case but is outside the scope of this article.

In \cite{Albers_Tabachnikov_Introducing_symplectic_billiards}  the rule for the symplectic billiard reflection is derived from a variational point of view, namely extremizing enclosed area. As illustrated in Figure \ref{fig:symplectic_billiard_reflection}, the pair $(x,y)$ of points on the curve is reflected by the symplectic billiard map to the pair $(y,z)$ if and only if the line $xz$ is parallel to the tangent line at $y$. 
This continues to hold if we consider two distinct polygons $P_-$ and $P_+$ (or curves) and impose the same billiard reflection rule, that is, the segment $xz$ for points $x,z\in P_-$ is parallel to the side containing $y\in P_+$, see Figure \ref{fig:symplectic_billiard_reflection}, and similarly with the roles of $P_-$ and $P_+$ reversed. The previous, single table case is included by simply considering $P_-=P_+$. 

\begin{figure}[ht]
\includegraphics[width=.9\linewidth]{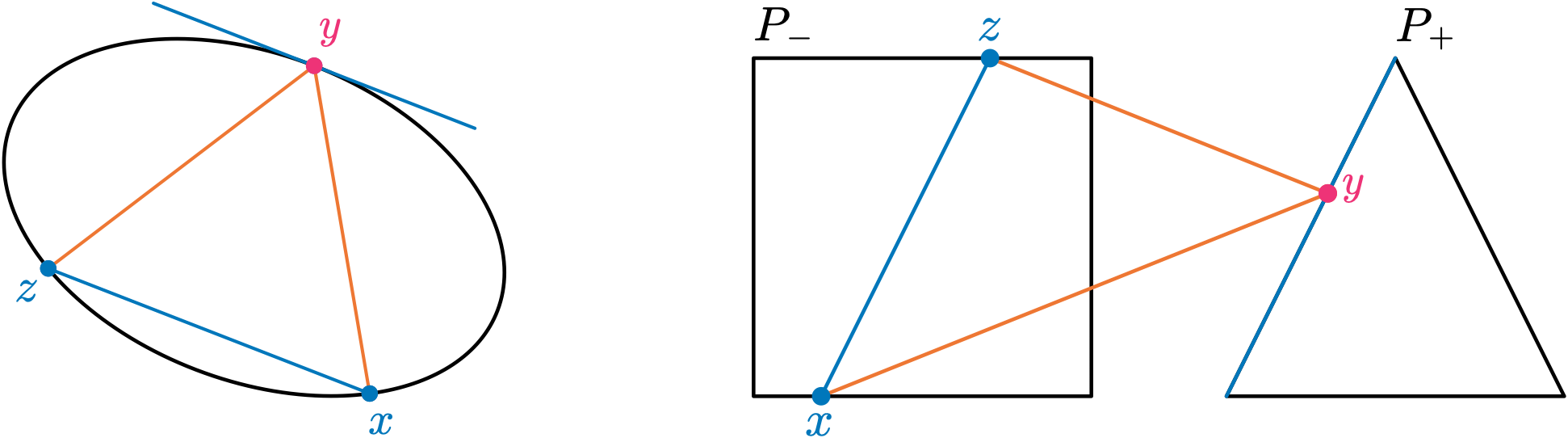} \qquad\qquad
\caption{The symplectic billiard reflection in a curve and in two polygons.}
\label{fig:symplectic_billiard_reflection}
\end{figure}


In Section \ref{sec:symplectic_billiard_map_on_pairs_of_polygons} we provide the necessary notions and definitions. The non-convexity makes it necessary to deal with certain subtleties. For instance, proving that the symplectic billiard map can be iterated infinitely often forward and backward  on a set of full measure requires quite some more work than in the convex case. As part of this discussion we introduce the subsets $C\subset F\subset P_- \sqcup P_+$ of critical points resp.~filled set of vertices and $N\subset C^\#\subset (P_-\times P_+) \sqcup (P_+\times P_-)$, the discontinuity set resp.~the $C$-grid. 

In Section \ref{section:criteria} we then formulate and prove the main assertions, Theorems \ref{theorem:F_finite} and \ref{theorem:periodicity-criterion}, concerning periodicity criteria for pairs of polygons in terms of the sets $F$, $C$ and $C^\#$. This goes back to the discovery in \cite{Albers_Banhatti_Sadlo_Schwartz_Tabachnikov_2025} of several polygons for which \textit{every} symplectic billiard trajectory is periodic. For very few cases, e.g.~the Quad, an ad-hoc argument was provided in \cite{Albers_Banhatti_Sadlo_Schwartz_Tabachnikov_2025}. Full periodicity of the symplectic billiard map was experimentally observed for a number of other cases, too. Theorems  \ref{theorem:F_finite} and \ref{theorem:periodicity-criterion} give a systematic treatment of this phenomenon:

\begin{theorem}[\text{cf. Theorem }\ref{theorem:periodicity-criterion} (III)]
Fix a pair of polygons $P_-$ and $P_+$. If there are only finitely many classes of trajectories starting in vertices, more precisely, if the set $C$ of critical points is finite, then every symplectic billiard orbit is periodic with a uniform period bound. 
\end{theorem}

The generalization to pairs of polygons was instrumental in arriving at our proofs and the statements of the theorems. In turn, understanding why every symplectic billiard orbit on certain pairs of polygons is periodic makes it possible to give big classes of examples, see e.g.~Corollary \ref{corollary:three_directions}. Moreover, we exhibit the first examples where every symplectic billiard orbit is periodic but there is no uniform upper period bound:

\begin{theorem}[\text{cf. Theorem }\ref{theorem:FP_but_not_BP}]
There is a pair of polygons $P_-$ and $P_+$ for which every symplectic billiard orbit is periodic; however, their periods are not uniformly bounded. 
\end{theorem}

In contrast to the fully periodic cases, in Section \ref{section:kite} we prove in detail that there are quadrilaterals, called (north-east) ne-quadrilaterals, where not every orbit is periodic. In fact, we show that certain periodic orbits are isolated:

\begin{theorem}[\text{cf. Section }\ref{section:kite}]
There is an open family of single table polygons each having an isolated periodic symplectic billiard orbit. 
\end{theorem}

The set of these ne-quadrilaterals forms an open set in the space of quadrilaterals up to affine transformations. While it was ``evident''  using computer simulations that there are examples with non-periodic orbits there was, so far, no proof of this fact. The proof of the existence of isolated periodic orbits uses an interesting geometric observation about ne-quadrilaterals. 

In Section \ref{section:necktie} we discuss a pair of convex polygons called the necktie, see Figure \ref{fig:necktie_intro}. 
\begin{figure}[ht]
    \centering
    \includegraphics[width=0.06\linewidth]{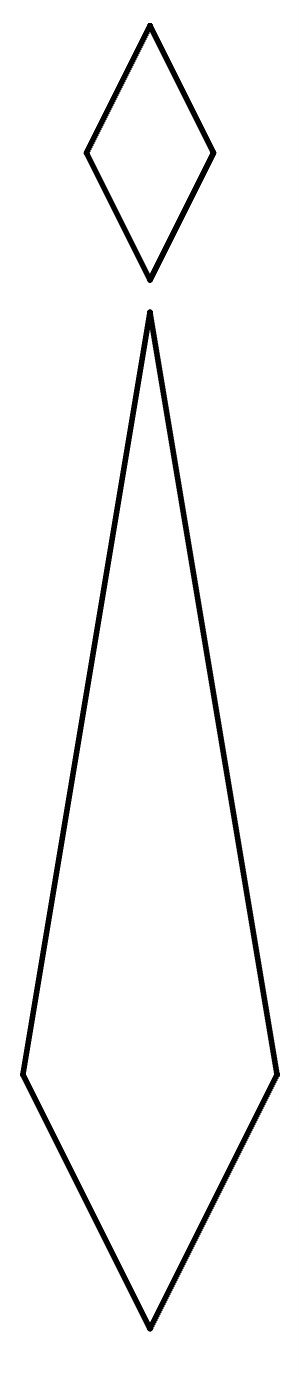}
    \caption{The necktie -- a pair of polygons for which the symplectic billiard map has no periodic orbits at all, see Section \ref{section:necktie}.}
    \label{fig:necktie_intro}
\end{figure}
We prove that in this example the symplectic billiard map possesses not a single periodic orbit:

\begin{theorem}[\text{cf. Section \ref{section:necktie}}]
    There is a pair of convex polygons for which the symplectic billiard map has no periodic orbits at all.
\end{theorem}

This is in stark contrast to the following, where we prove that for pairs of smooth, strictly convex curves there always exist infinitely many periodic orbits:

\begin{theorem}[\text{cf. Theorem }\ref{theorem:smooth_existence}]
    Every pair of smooth, strictly convex tables has infinitely many periodic symplectic billiard orbits.
\end{theorem}

The necktie is the first example of a pair of polygons without any periodic orbits. It is an open question whether such examples exist in the single table setting. The proof that the necktie does not have periodic orbits involves exhibiting a subset of its phase space which every orbit visits and on which the symplectic billiard map induces a return map. This return map is (conjugate to) the so-called dyadic odometer resp.~the von Neumann-Kakutani transformation which has no periodic points.

\subsection*{Acknowledgements} The authors are partially supported by the Deutsche Forschungsgemeinschaft under Germany’s Excellence Strategy EXC2181/1 - 390900948 (the Heidelberg STRUCTURES Excellence Cluster), the Collaborative Research Center SFB/TRR 191 - 281071066 (Symplectic Structures in Geometry, Algebra and Dynamics), and the Research Training Group RTG 2229 - 281869850 (Asymptotic Invariants and Limits of Groups and Spaces).

This article tremendously benefited from discussions with Rich Schwartz and Sergei Tabachnikov for which we warmly thank both of them.

We thank the anonymous referee for a very thorough report and many helpful suggestions!

\section{The symplectic billiard map on pairs of polygons}\label{sec:symplectic_billiard_map_on_pairs_of_polygons}

Let us begin by describing the setting in certain detail. For two points $x,y\in\R^2$ we denote by $xy$ resp.~$\overline{xy}$ the open resp.~closed segment between $x$ and $y$.


\begin{definition}
Let $v_1, \dots, v_n \in \R^2$ be pairwise distinct such that any two open segments $v_i v_{i+1}$, $v_j v_{j+1}$, $i\neq j$, do not intersect, nor does an open segment $v_jv_{j+1}$ contain any $v_i$. The piecewise linear, closed curve comprised of $\overline{v_iv_{i+1}}$, with $v_{n+1} := v_1$, is called an (embedded) polygon $P$. We call $V = \{v_1, \dots, v_n\} $ the set of vertices and $v_1 v_{2}, \dots, v_n v_{n+1}$ (open) edges of $P$.
\end{definition}

\begin{remark} \label{remark_P_as_S1}
    An alternative description of a polygon $P$ is as a continuous injective map $P:S^1\to \R^2$ whose image is contained in a finite union of lines. In the following we always assume that three consecutive points $v_i, v_{i+1}, v_{i+2}$ are not co-linear. Moreover, we read indices cyclically. 
\end{remark}

\begin{definition}
    Let $P$ be a polygon. Then $\R^2 \setminus P$ is divided into exactly two connected components, one of which is bounded. The bounded component $\Int(P)$ is called the interior of $P$, the other component $\Ext(P)$ is called the exterior of $P$.
\end{definition}

\begin{remark}\label{remark:outside_inside}
The interior of a polygon is well-defined by the polygonal version of the Jordan curve theorem, see for instance \cite{Courant_Robbins}. In particular, locally near a point on an edge of $P$ there is a unique ``outside'' and ``inside''.
For a point $x$ on an edge $v_i v_{i+1}$ we denote by $\nu_x$ the outer unit normal vector, that is, the vector of unit length that is orthogonal to $v_i v_{i+1}$ and points into the exterior of $P$ at $x$, i.e.~$x + \eps \nu_x \in \Ext(P)$ for any sufficiently small $\eps > 0$.
\end{remark}

\begin{remark}\label{remark:billiard_rule}
We will extend the definition of symplectic billiards from \cite{Albers_Tabachnikov_Introducing_symplectic_billiards} to non-convex polygons and to pairs of polygons. Most of this article is concerned with pairs of convex polygons. Therefore, the reader may choose to skip the non-convex generalization. Before giving rigorous definitions and statements we will showcase the idea.

Recall that for a convex polygon $P$, the reflection law for the symplectic billiard map is the following rule. Let $x$ and $y$ be points on $P$ that lie on non-parallel edges. Given such points $x$ and $y$ on $P$, choose $z$ as the unique intersection of $P$ and the line $x + T_yP$ other than $x$, see Figure \ref{fig:convex_rule}. 

\begin{figure}[ht]
    \centering
    \includegraphics[width=.35\linewidth]{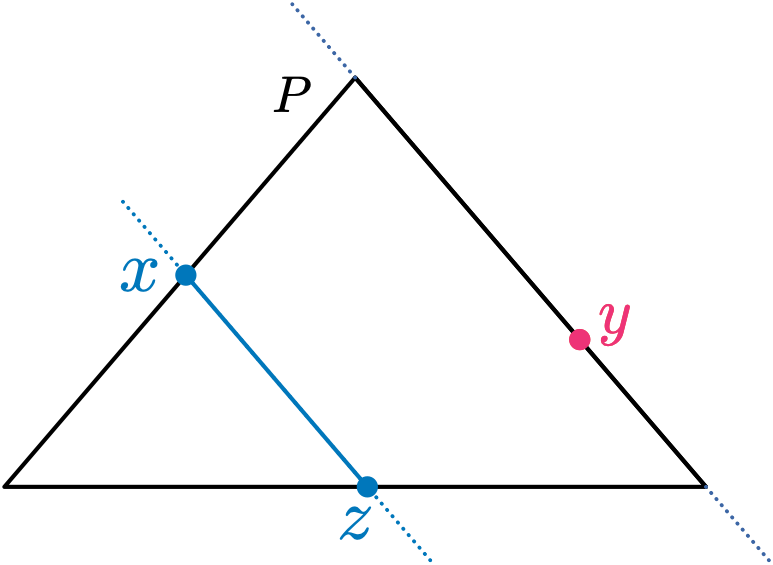}
    \caption{The symplectic billiard map on a convex polygon.}
    \label{fig:convex_rule}
\end{figure}

%
%
This setting 
can be generalized to two, possibly non-convex polygons (also called tables) as follows.
    Given two polygons $P_-$ and $P_+$ and two points $x,y$ on non-parallel edges and not on the same polygon, i.e.~$x\in P_\pm$ and $y\in P_\mp$, 
    we modify the above rule to choose $z$ as the unique intersection (other than $x$) of $P_\pm$ with the line $x + T_yP_\mp$ that satisfies $xz \subset \Int(P_\pm)$, see Figure \ref{fig:disjoint_rule}. 

\begin{figure}[ht]
    \centering
    \includegraphics[width=.75\linewidth]{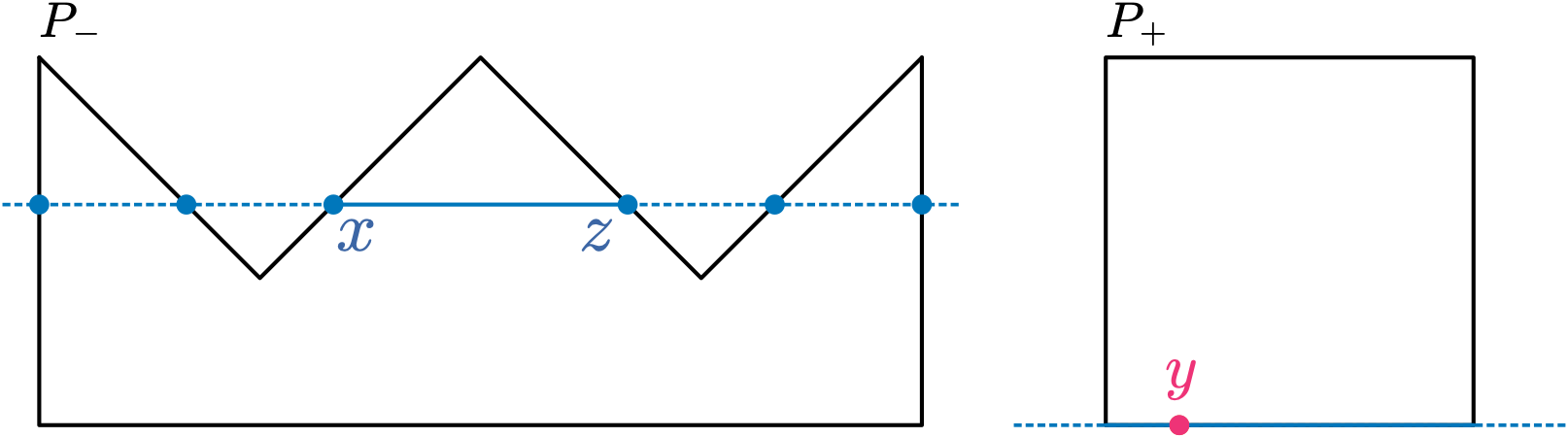}
    \caption{The symplectic billiard map on two polygons.}
    \label{fig:disjoint_rule}
\end{figure}

The plus and minus signs are only for convenience of notation and have no deeper meaning. It is useful to assume them to be disjoint for visualization, as the symplectic billiard dynamics is not changed when translating the polygons. Writing $P_\pm$ and $P_\mp$ indicates a sign choice and its opposite choice.

Note that for simplicity we draw the polygons disjointly next to each other. The previous case of a single polygon $P$ is subsumed (and generalized to non-convex tables) by setting $P_+:=P_-:=P$. We will focus on the case of two tables.

Setting $x_0:= x\in P_\pm$, $x_1:= y\in P_\mp$ and $x_2:= z\in P_\pm$, we can repeat applying this rule by starting at $x_1\in P_\mp$, moving along the tangent of $x_2\in P_\pm$ into the interior of $P_\mp$, until hitting the boundary, which determines a new point $x_3\in P_\mp$. Iterating this gives a sequence $(x_k)_{k\in\N_0}$ which is the forward part of the symplectic billiard trajectory of the pair $(x_0,x_1)\in P_\pm\times P_\mp$. See Figure \ref{fig:first_iterations} for the first few iterations. The even trajectory $(x_{2k})_{k\in\N_0}$ stays in $P_\pm$ (blue in Figure \ref{fig:first_iterations}) and the odd trajectory $(x_{2k-1})_{k\in\N}$ stays in $P_\mp$ (red in Figure \ref{fig:first_iterations}). Note that inserting $(x_{i+1},x_i)$ into our rule (mind the order), we obtain $x_{i-1}$. Using this reversibility we will consider $x_{-1},\,x_{-2},\ldots$ as the backward part of the trajectory.

\begin{figure}[ht]
    \centering
    \includegraphics[width=.5\linewidth]{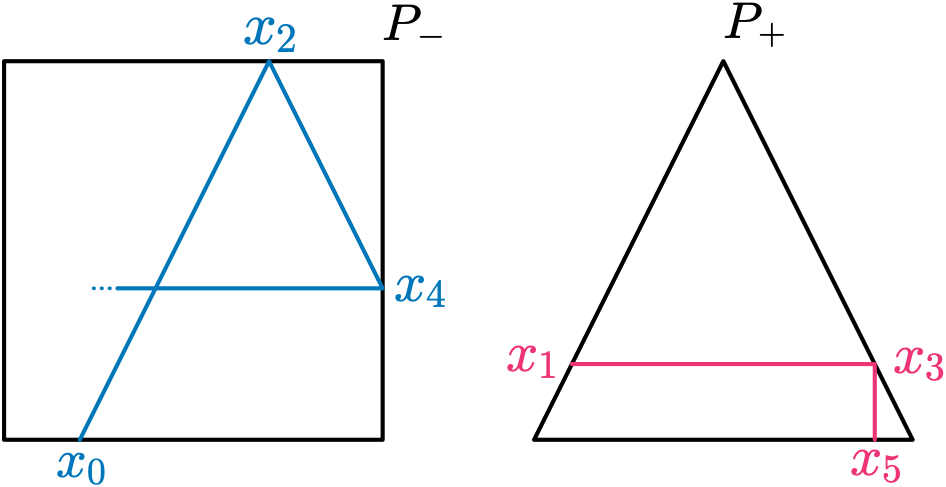}
    \caption{A part of a symplectic billiard trajectory on two polygons.}
    \label{fig:first_iterations}
\end{figure}

\end{remark}

The symplectic billiard rule is not well-defined for all $(x,y)$, however. For example $y$ should not be a vertex or $x$ and $y$ should not lie on parallel edges. When $x$ is a vertex, the rule may or may not be well-defined, see Figure \ref{fig:map_not_defined} for examples.

\begin{figure}[ht]
    \centering
    \includegraphics[width=.6\linewidth]{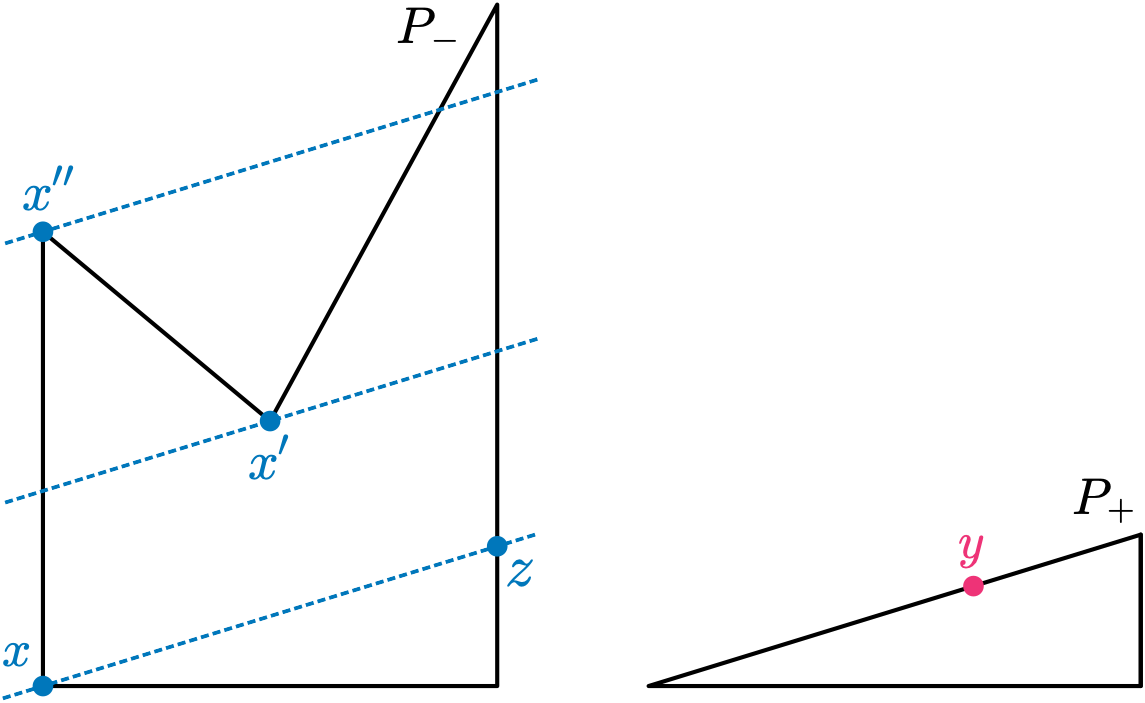}
    \caption{The symplectic billiard map sends $(x,y)$ to $(y,z)$, however it is not defined at the points $(x',y)$ and $(x'',y)$
    }
    \label{fig:map_not_defined}
\end{figure}

We say that a pair $(x,y)\in P_\pm \times P_\mp$ satisfies \textbf{condition A} if 
$$
y \notin V_\mp \text{ and }\exists!\, z \in P_\pm \cap (x + T_yP_\mp \setminus \{0\}) \text{ such that }xz \subset \Int(P_\pm). 
$$
We define the maximal phase space by
$$
\P_\mathrm{max}:= \{(x,y) \in (P_- \times P_+)\sqcup (P_+\times P_-)\mid  (x,y) \text{ satisfies condition A}\}. 
$$
This is the biggest set on which the symplectic billiard map is defined (for one iteration).

\begin{definition}\label{def:sympl._billiard_map}
    The symplectic billiard map is defined as
    \begin{equation}\nonumber
    \begin{aligned}
        \phi: \P_\mathrm{max} &\longrightarrow (P_-\times P_+) \sqcup (P_+\times P_-)\\
        (x,y) &\mapsto (y,z)
    \end{aligned}
    \end{equation}
    where $z$ is the unique point from condition A.
    \end{definition}

\begin{remark}\label{remark:single_tables_are_a_special_case}
The definition of the symplectic billiard map on two tables agrees with the existing notion of symplectic billiard on single table convex polygons and extends this implicitly to single table non-convex polygons as well (taking a non-convex polygon as $P_- = P_+$). However, the single table setting allows for orbits of odd periods, and in the two table counterpart the respective orbit has twice the period. That is, a three-periodic trajectory $(\ldots,x,y,z,x,y,z,\ldots)$ in a single polygon $P$ becomes $(\ldots,x_+,y_-,z_+,x_-,y_+,z_-,\ldots)$ in $(P\times P)\sqcup (P\times P)$ with $x_\pm=x$ etc.
\end{remark}

We want to show that the maximal phase space is a ``fat'' set. For that let us prove that it contains the set $\P'$ of pairs of points on non-parallel edges, i.e.~contains
$$
    \P' := \{(x,y) \in (P_- \setminus V_- \times P_+\setminus V_+) \sqcup (P_+\setminus V_+ \times P_-\setminus V_- ) \mid\det(\nu_x,\nu_y)  \neq  0\}
$$
where we recall that $V_\pm\subset P_\pm$ is the respective vertex set.

\begin{lemma}\label{lemma:unique_intersection_point}
    For any $(x,y) \in  \P' $ there exists a unique $z \in P_\pm \cap (x + T_y P_\mp \setminus \{0\})$ such that $xz \subset \Int(P_\pm)$. Hence $\P'$ is contained in $\P_\mathrm{max}$. Moreover, if $z$ is not a vertex then $\det(\nu_y,\nu_z) \neq 0$, i.e.~$(y,z) \in \P'$. 
\end{lemma}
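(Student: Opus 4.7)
The plan is to reduce the statement to a transversality-plus-compactness argument for the line through $x$ in the direction of the edge containing $y$. Fix a nonzero tangent vector $w \in T_y P_\mp$ and parameterize this line by $\gamma(t) := x + tw$. Since $y \notin V_\mp$, the line $T_y P_\mp$ is honestly the line through the edge of $P_\mp$ at $y$, and the hypothesis $\det(\nu_x,\nu_y) \neq 0$ says exactly that $w$ is not parallel to the edge of $P_\pm$ containing $x$. Thus $\gamma$ is transverse to $P_\pm$ at $t=0$: there is some $\delta > 0$ so that one of $\gamma((-\delta,0))$, $\gamma((0,\delta))$ lies entirely in $\Int(P_\pm)$ and the other in $\Ext(P_\pm)$. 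After replacing $w$ by $-w$ if necessary I may assume $\gamma((0,\delta)) \subset \Int(P_\pm)$.

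For existence, set $t^* := \sup\{ t > 0 \mid \gamma((0,t)) \subset \Int(P_\pm)\}$. Since $\Int(P_\pm)$ is bounded (Jordan), $t^* < \infty$, and $t^* \geq \delta > 0$ by the first step. Put $z := \gamma(t^*)$. By definition of $t^*$ the open segment $xz$ lies in $\Int(P_\pm)$, hence by continuity $z \in \overline{\Int(P_\pm)} = \Int(P_\pm) \cup P_\pm$; if $z$ were in $\Int(P_\pm)$ one could extend beyond $t^*$ since $\Int(P_\pm)$ is open, contradicting maximality. So $z \in P_\pm$, and $z \neq x$ because $t^* > 0$.

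For uniqueness, suppose $z' = \gamma(t')$ with $t' \neq 0$ satisfies $z' \in P_\pm$ and $xz' \subset \Int(P_\pm)$. If $t' < 0$, points of $\gamma$ near $0$ on that side lie in $\Ext(P_\pm)$, contradicting $xz' \subset \Int(P_\pm)$. If $0 < t' < t^*$, then $z' \in \gamma((0,t^*)) \subset \Int(P_\pm)$, contradicting $z' \in P_\pm$. If $t' > t^*$, then $z = \gamma(t^*) \in xz' \subset \Int(P_\pm)$, contradicting $z \in P_\pm$. Hence $t' = t^*$ and $z' = z$. This already shows $\P' \subset \P_\mathrm{max}$.

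For the last claim, assume $z$ is not a vertex of $P_\pm$ and, toward a contradiction, that $\det(\nu_y,\nu_z) = 0$. Then $w \in T_y P_\mp$ is parallel to $T_z P_\pm$, so the line $\ell = \gamma(\R)$ is parallel to the edge $e$ of $P_\pm$ containing $z$. Since $z \in \ell \cap e$, the line $\ell$ must coincide with the line through $e$; because $z$ is in the relative interior of $e$, a whole neighborhood of $z$ in $\ell$ lies on $e \subset P_\pm$. But for $t$ slightly less than $t^*$ we have $\gamma(t) \in \Int(P_\pm)$, which is disjoint from $P_\pm$, a contradiction. The main obstacle is organizing the sup argument so that the two possibilities $z \in \Int(P_\pm)$ and $z \in \Ext(P_\pm)$ are both excluded without tacitly assuming convexity; the transversality at $x$ together with boundedness of $\Int(P_\pm)$ carries all the weight.
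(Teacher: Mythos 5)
Your proof is correct and follows essentially the same route as the paper's: existence via transversality of the line $x+T_yP_\mp$ to the edge through $x$, uniqueness by comparing the positions of the candidate intersection points along that line (your case analysis in $t'$ is the parametric version of the paper's ``$z-x$ and $z'-x$ point in the same direction, so w.l.o.g.\ $xz\subset xz'$'' argument), and the final claim by noting that $\nu_y\parallel\nu_z$ would force part of $xz$ to lie on the edge through $z$ rather than in $\Int(P_\pm)$. Your write-up is in fact more explicit than the paper's (the supremum construction makes precise the existence step that the paper only asserts), and no step is missing.
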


\begin{proof}
    Let $(x,y)\in \P'$. Since $\det(\nu_x,\nu_y) \neq 0$ there exists $z\in P_\pm\cap(x+T_yP_\mp\setminus \{0\})$ with $xz\subset\Int(P_\pm)$. Now, we assume that there exists another point $z'\in P_\pm\cap(x+T_yP_\mp\setminus \{0\})$ with $xz'\subset\Int(P_\pm)$. Locally near $x$ there is a unique ``outside'' and ``inside'' of $P$, see Remark \ref{remark:outside_inside}. Therefore, $z-x$ and $z'-x$ point  in the same direction. We may assume without loss of generality $xz\subset xz'$. If $z\neq z'$ then $z$ lies in the interior of $P_\pm$ and not on $P_\pm$, therefore $z=z'$.
    
    For the second claim assume that $z$ is not a vertex and that $\nu_y$ and $\nu_z$ are parallel. Since $z$ is not a vertex and $x-z$ is orthogonal to $\nu_y=\pm\nu_z$, the segment $xz$ must be partially contained in the edge of $P_\pm$ containing $z$, which contradicts $xz \subset \Int(P_\pm)$.
\end{proof}

In general, the image of $\P_\mathrm{max}$ under $\phi$ is not contained in $\P_\mathrm{max}$ again since $z$ might be a vertex of $P_\pm$. Note that if $z$ is not a vertex, then $(y,z) \in \P'\subset \P_\mathrm{max}$ by Lemma \ref{lemma:unique_intersection_point}. Moreover, given $y\in P_\mp$ the condition 
\begin{equation}\nonumber
z \in P_\pm \cap (x + T_yP_\mp \setminus \{0\}) \text{ and }xz \subset \Int(P_\pm)
\end{equation}
on points $x,z\in P_\pm$ is symmetric in $x$ and $z$, i.e.~is equivalent to
\begin{equation}\nonumber
x \in P_\pm \cap (z + T_yP_\mp \setminus \{0\}) \text{ and }zx \subset \Int(P_\pm).
\end{equation}
Thus, if $z$ is not a vertex, then not only $(y,z) \in \P'\subset \P_\mathrm{max}$ but also $(z,y) \in \P'\subset \P_\mathrm{max}$, again by Lemma \ref{lemma:unique_intersection_point}. Using uniqueness in condition $\mathrm{A}$  the next Lemma follows.

\begin{lemma}\label{lemma:billiard_map_reversible_on_bigger_set}
    For any $(x,y) \in \P'$ with $\phi(x,y) = (y,z)$ such that $z$ is not a vertex, we have $(z,y)\in \P'$ and $\phi(z,y) = (y,x)$. 
\end{lemma}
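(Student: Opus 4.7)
My plan is to simply unpack the symmetry observation recorded in the paragraph just before the lemma. First, since $(x,y)\in\P'$ and $\phi(x,y)=(y,z)$, condition $\mathrm{[A]}$ gives
$z\in P_\pm\cap(x+T_yP_\mp\setminus\{0\})$ with $xz\subset\Int(P_\pm)$.
As noted, this condition is symmetric in $x$ and $z$, so the same point $x$ satisfies $x\in P_\pm\cap(z+T_yP_\mp\setminus\{0\})$ with $zx\subset\Int(P_\pm)$.

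Next I would verify that $(z,y)\in\P'$ so that Lemma~\ref{lemma:unique_intersection_point} may be applied with base point $z$. From $(x,y)\in\P'$ we already know $y\notin V_\mp$ and $x\notin V_\pm$; by hypothesis also $z\notin V_\pm$; and the ``moreover'' clause of Lemma~\ref{lemma:unique_intersection_point} applied to $(x,y)$ (using that $z$ is not a vertex) yields $\det(\nu_y,\nu_z)\neq 0$, hence $\det(\nu_z,\nu_y)\neq 0$. This confirms $(z,y)\in\P'$.

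Finally, applying Lemma~\ref{lemma:unique_intersection_point} to $(z,y)$ produces a \emph{unique} point $w\in P_\pm\cap(z+T_yP_\mp\setminus\{0\})$ with $zw\subset\Int(P_\pm)$. The first paragraph exhibited $x$ as such a point, so uniqueness forces $w=x$, and therefore $\phi(z,y)=(y,x)$.

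I do not anticipate any real obstacle: Lemma~\ref{lemma:unique_intersection_point} together with the $x\leftrightarrow z$ symmetry of the defining condition already does all the conceptual work, and the proof is a clean assembly of those two ingredients. The only subtlety to watch is to invoke the uniqueness statement of Lemma~\ref{lemma:unique_intersection_point} at the pair $(z,y)$ rather than at the original pair $(x,y)$, which is precisely what the hypothesis ``$z$ is not a vertex'' enables.
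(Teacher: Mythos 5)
Your proof is correct and follows essentially the same route as the paper: the $x\leftrightarrow z$ symmetry of the condition in $\mathrm{[A]}$, the ``moreover'' clause of Lemma~\ref{lemma:unique_intersection_point} to get $(z,y)\in\P'$, and uniqueness applied at $(z,y)$ to force $\phi(z,y)=(y,x)$. No gaps.
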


\begin{remark}\label{remark:bigger_set_reversible}
It is easy to see that Lemma \ref{lemma:billiard_map_reversible_on_bigger_set} continues to hold even for points $(x,y) \in \P_\mathrm{max}$ or for $z$ being a vertex but not a ``non-convex vertex'', see Figure \ref{fig:non-reversible}. In particular, the issue in Figure \ref{fig:non-reversible} never arises in the convex case. 
\end{remark}

\begin{figure}[ht]
    \centering
    \includegraphics[width=.6\linewidth]{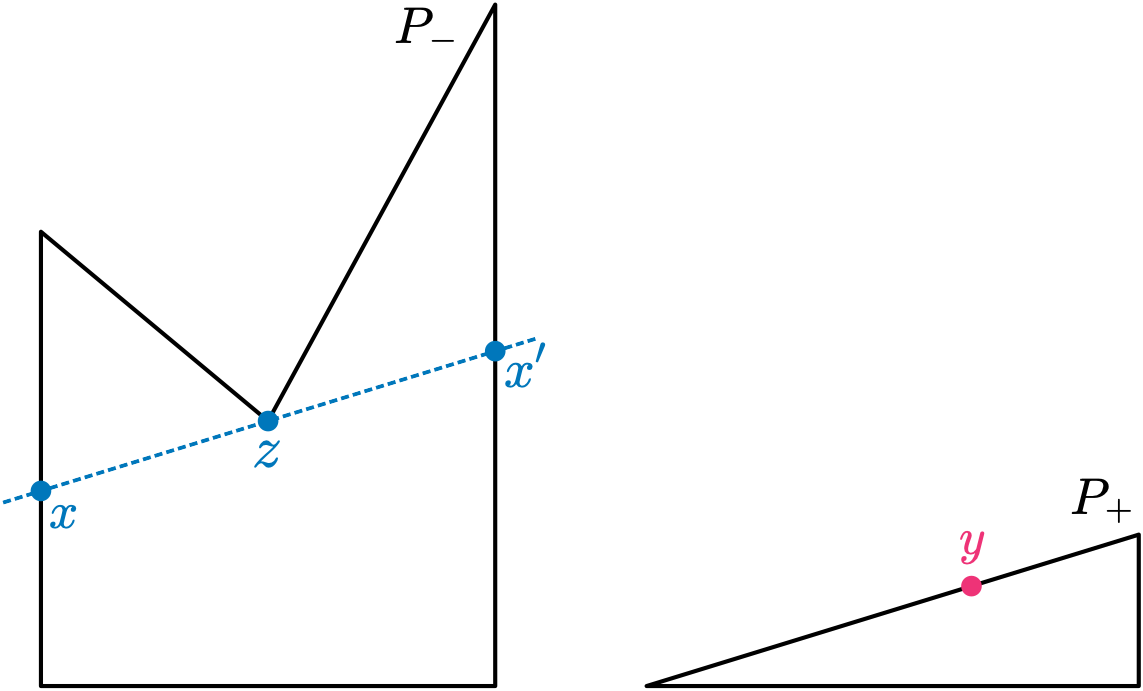}
    \caption{We have both $\phi(x,y) = \phi(x',y) = (y,z)$. The vertex $z$ is a ``non-convex'' vertex and $\phi(z,y)$ is not defined.}
    \label{fig:non-reversible}
\end{figure}

We can iterate the billiard map forwards and backwards infinitely often on the phase space $\P$ which we get if we remove a null set $N$ from $\P_\mathrm{max}$. This null set $N$ is the set of points for which the forward or backward trajectory hits a vertex. 
We give a formal definition for completeness. For ease of notation we introduce the swap map $\sigma (x,y) = (y,x)$. 

In the remainder of this section we define various sets and analyze their properties. For convenience we collect all these sets at the end of this section in Remark \ref{rmk:glossary}.

\begin{definition}\label{definition:iterable_phase_space}
    Let $(P_-,P_+)$ be a pair of polygons and $(V_-,V_+)$ their sets of vertices. Set $N_0 = (P_-\times V_+) \sqcup (P_+\times V_-)$ and recursively $N_{i+1} := \phi^{-1}(N_i)$, $i\in \N_0 $.
    Then the set
    $$
    N:=\bigcup_{i \in \N_0} N_i \cup \sigma\Big(\bigcup_{i \in \N_0} N_i\Big)
    $$
 is called the discontinuity set. 
 We define the phase space as 
    $$
    \P := \P_\mathrm{max} \setminus N = \P' \setminus N,
    $$
    see Remark \ref{rmk:P_m_setminus_N=P_prime_setminus_N} for the equality.
    Moreover, we define the forward phase space 
    $$
    \P_f := \{(x,y) \in \P \mid \det(\nu_x,\nu_y) > 0\}
    $$
    as well as the backward phase space
    $$
    \P_b := \{(x,y) \in \P \mid \det(\nu_x,\nu_y) < 0\}.
    $$
\end{definition}

\begin{remark}\label{rmk:P_m_setminus_N=P_prime_setminus_N}
We point out that $N_0$ is a finite union of lines and thus so is $N_i$. In particular, $N$ is indeed a null set.

The equality $\P_\mathrm{max} \setminus N = \P' \setminus N$ follows from $\P' \subset \P_\mathrm{max}$ and $\P_\mathrm{max}\setminus\P' \subset \sigma(N_0)$. In particular, we have that $\phi(x,y)=(y,z)$ for $(x,y)\in\P$ implies that $z$ is not a vertex. We also point out
$$
\P = \P_f \sqcup \P_b
$$
and $\sigma(\P_f)=\P_b$ resp.~$\sigma(\P_b)=\P_f$.
\end{remark}

\begin{lemma}\label{lemma:billard_map_maps_phase_space_into_itself}
   The symplectic billiard map $\phi$ maps $\P$, $\P_f$ resp.~$\P_b$ into itself, i.e.
    $$
    \phi(\P) \subset \P ,\,\phi(\P_f) \subset \P_f \text{ and } \phi(\P_b) \subset \P_b.
    $$
    In particular, the property of being in the forward resp.~backward phase space is preserved under $\phi$.
\end{lemma}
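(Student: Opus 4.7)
The lemma splits into two independent claims: (a) $\phi(\P)\subset\P$, and (b) $\phi$ preserves the sign of $\det(\nu_x,\nu_y)$. Once both are established, the three stated inclusions follow at once.

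\textbf{Proof of (a).} Given $(x,y)\in\P$, set $(y,z):=\phi(x,y)$. By Remark \ref{rmk:P_m_setminus_N=P_prime_setminus_N} the point $z$ is not a vertex, hence Lemma \ref{lemma:unique_intersection_point} gives $(y,z)\in\P'\subset\P_\mathrm{max}$. It remains to verify $(y,z)\notin N$. Suppose first $(y,z)\in N_i$ for some $i\ge 0$; then $(x,y)\in\phi^{-1}(N_i)=N_{i+1}\subset N$, contradicting $(x,y)\in\P$. Suppose instead $(y,z)\in\sigma(N_j)$, i.e.\ $(z,y)\in N_j$. If $j=0$, then $y\in V_\mp$, contradicting $(x,y)\in\P'$. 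If $j\ge 1$, applying Lemma \ref{lemma:billiard_map_reversible_on_bigger_set} (available since $z$ is not a vertex) yields $\phi(z,y)=(y,x)$; combined with $(z,y)\in\phi^{-1}(N_{j-1})$, this forces $(y,x)\in N_{j-1}$, so $(x,y)=\sigma(y,x)\in\sigma(N_{j-1})\subset N$, again a contradiction.

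\textbf{Proof of (b).} Let $J$ denote rotation by $\pi/2$. Since $z-x$ lies along the edge of $P_\mp$ at $y$, which is orthogonal to $\nu_y$, I can write $z-x=cJ\nu_y$ for some $c\neq 0$. The defining condition $xz\subset\Int(P_\pm)$, together with Remark \ref{remark:outside_inside}, means $z-x$ points inward at $x$ and outward at $z$ relative to $P_\pm$, giving $\langle z-x,\nu_x\rangle<0$ and $\langle z-x,\nu_z\rangle>0$. Using the identity $\langle Jv,w\rangle=\det(v,w)$ in $\R^2$, these inequalities become $-c\det(\nu_x,\nu_y)<0$ and $c\det(\nu_y,\nu_z)>0$, so $c$, $\det(\nu_x,\nu_y)$ and $\det(\nu_y,\nu_z)$ all share their sign, proving (b).

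\textbf{Expected main obstacle.} The delicate case is $(y,z)\in\sigma(N_j)$ with $j\ge 1$, which requires going backwards along the orbit; this is precisely what the reversibility Lemma \ref{lemma:billiard_map_reversible_on_bigger_set} supplies, and the symmetric definition of $N$ using $\sigma$ is tailored so that this backwards step stays inside $N$.
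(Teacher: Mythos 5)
Your proof is correct, and it splits into two halves of different character. The half showing $\phi(x,y)\notin N$ is essentially the paper's own argument: the same case distinction between $(y,z)\in N_i$ and $(z,y)\in N_j$, the same use of Lemma \ref{lemma:billiard_map_reversible_on_bigger_set} to pull the second case back to $(y,x)\in N_{j-1}$, and the same appeal to $\sigma$-invariance of $N$ (you are in fact slightly more careful than the paper in isolating the base case $j=0$, which the paper glosses over). The sign-preservation half, however, takes a genuinely different route. The paper reduces to the smooth strictly convex case by constructing an auxiliary $C^1$ convex curve through $x$ and $z$ with matching outer normals and then citing Lemma 2.1 of \cite{Albers_Tabachnikov_Introducing_symplectic_billiards}; your argument instead writes $z-x=cJ\nu_y$ and converts the interiority condition $xz\subset\Int(P_\pm)$ into the strict inequalities $\langle z-x,\nu_x\rangle<0$ and $\langle z-x,\nu_z\rangle>0$ via the identity $\langle Jv,w\rangle=\det(v,w)$. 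This is more elementary and self-contained: it avoids the somewhat informal construction of the interpolating curve and the external reference, and as a byproduct it re-derives $\det(\nu_y,\nu_z)\neq 0$ directly. The only point you leave implicit is why the two inner products are strictly nonzero; this is because equality would force the segment $xz$ to lie along the edge at $x$ resp.~$z$ near that endpoint, contradicting $xz\subset\Int(P_\pm)$ (the same observation used in the proof of Lemma \ref{lemma:unique_intersection_point}). With that one sentence added, your argument is complete.
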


\begin{proof}
By symmetry of the backward and forward phase space, it suffices to show $\phi(\P_f)\subset \P_f$. 
Let $(x,y)\in \P_f$ and $(y,z) := \phi(x,y)$, in particular, $\det(\nu_x,\nu_y)>0$ holds. First let us argue that $\det(\nu_y,\nu_z)>0$ (note that $z$ is not a vertex since $(x,y)\notin N$). For a smooth, convex curve, this statement has been established in  \cite[Lemma 2.1]{Albers_Tabachnikov_Introducing_symplectic_billiards}. Here is an alternate proof that also works in the polygonal case. 
Write $J = \begin{pmatrix}  0 & -1 \\ 1 & 0\end{pmatrix}$ for the rotation by $\pi/2$. Note that for $v,w \in \R^2$ we have $\det(v,Jw) = \langle v, w\rangle = \det(-Jv,w)$. Now write $t= \tfrac{z-x}{\|z-x\|}$, which is parallel to the tangent at $y$ and thus $t = \pm J \nu_y$, respectively $\mp J t = \nu_y.$ 
Now for $(x,y)\in \P_f$ we have
$$ 0 < \det(\nu_x,\nu_y) = \det(\nu_x,\mp J t) = \mp \det(\nu_x, J t) = \mp \langle \nu_x, t\rangle. $$
Since $t$ is pointing into the interior of the polygon at $x$, we have $\langle \nu_x,t\rangle < 0$. Thus $ \nu_y = - J t$. Hence
$$\det(\nu_y,\nu_z) = \det(- J t, \nu_z) = \langle t, \nu_z \rangle > 0$$
since $-t$ is pointing into the interior at $z$. 

The inequality $\det(\nu_y,\nu_z)>0$ implies in particular that $(y,z)\in \P'$. It remains to show $(y,z)\notin N$. Assume otherwise, i.e.~$(y,z)\in N$. Then, by Definition \ref{definition:iterable_phase_space}, there is an $n \in \N_0$ such that either $\phi(x,y)=(y,z) \in \phi^{-n}(N_0)$ or $(z,y) \in \phi^{-n}(N_0)$. In the first case we conclude $(x,y) \in \phi^{-(n+1)}(N_0)$ and thus $(x,y)\in N$. This contradicts $(x,y) \in \P$. In the second case we recall that $z$ is not a vertex. Therefore, Lemma \ref{lemma:billiard_map_reversible_on_bigger_set} asserts $\phi(z,y) = (y,x)$. Hence $(y,x) \in \phi^{-(n-1)}(N_0)$ and, in particular, $(y,x)\in N$. Since $\sigma(N)=N$ we arrive at the same contradiction $(x,y) \in N$. This concludes the proof.
\end{proof}

\begin{corollary}
\label{corollary:invertible_on_phase_space}
The symplectic billiard map $\phi$ is invertible on $\P$ with $\phi^{-1} = \sigma \circ \phi \circ \sigma$. In particular, we have $\phi(\P) = \P$, $\phi(\P_f) = \P_f$ and $\phi(\P_b) = \P_b$.
\end{corollary}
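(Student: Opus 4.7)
The plan is to reduce everything to three ingredients already in hand: (i) the reversibility statement of Lemma \ref{lemma:billiard_map_reversible_on_bigger_set}, (ii) the observation from Remark \ref{rmk:P_m_setminus_N=P_prime_setminus_N} that for $(x,y)\in\P$ with $\phi(x,y)=(y,z)$ the point $z$ is never a vertex, and (iii) the symmetry $\sigma(N)=N$ (built into Definition \ref{definition:iterable_phase_space}) together with $\sigma(\P')=\P'$, which together yield $\sigma(\P)=\P$ and, by the remark, $\sigma(\P_f)=\P_b$ and $\sigma(\P_b)=\P_f$. The candidate inverse is $\Phi:=\sigma\circ\phi\circ\sigma$, and the proof amounts to checking that $\Phi$ is well-defined on $\P$ and is a two-sided inverse of $\phi$.

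First I would check that $\Phi$ maps $\P$ into itself. For $(x,y)\in\P$, $\sigma(x,y)=(y,x)\in\P$ by $\sigma$-invariance, then $\phi(y,x)\in\P$ by Lemma \ref{lemma:billard_map_maps_phase_space_into_itself}, and a final application of $\sigma$ keeps us in $\P$. Next I would verify the inverse identity. Given $(x,y)\in\P$, write $\phi(x,y)=(y,z)$; by (ii) the point $z$ is not a vertex, so Lemma \ref{lemma:billiard_map_reversible_on_bigger_set} gives $(z,y)\in\P'$ and $\phi(z,y)=(y,x)$. Therefore
\[
\Phi(\phi(x,y)) \;=\; \sigma\phi\sigma(y,z) \;=\; \sigma\phi(z,y) \;=\; \sigma(y,x) \;=\; (x,y).
\]
The same computation applied to $(y,x)\in\P$ in place of $(x,y)$ yields $\phi\circ\Phi=\mathrm{id}_{\P}$, so $\phi^{-1}=\Phi=\sigma\circ\phi\circ\sigma$ and in particular $\phi(\P)=\P$.

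For the refined equalities on $\P_f$ and $\P_b$, I would combine the inclusion $\phi(\P_f)\subset\P_f$ from Lemma \ref{lemma:billard_map_maps_phase_space_into_itself} with the fact that $\phi^{-1}(\P_f)=\sigma\phi\sigma(\P_f)\subset\sigma\phi(\P_b)\subset\sigma(\P_b)=\P_f$, using $\sigma$ to interchange the forward and backward phase spaces. This upgrades the one-sided inclusion to equality, and the argument for $\P_b$ is identical. I don't anticipate a real obstacle here: the only thing that required genuine work (excluding pathological vertex reflections) has already been taken care of by the definition of $\P$ and by Lemma \ref{lemma:billard_map_maps_phase_space_into_itself}, so this corollary is a short bookkeeping consequence.
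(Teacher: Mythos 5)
Your proposal is correct and follows essentially the same route as the paper's proof: establish $\sigma(\P)=\P$, use Lemma \ref{lemma:billiard_map_reversible_on_bigger_set} (together with the fact that $z$ is never a vertex for $(x,y)\in\P$) to get $\sigma\circ\phi\circ\sigma\circ\phi=\mathrm{id}$ and its companion identity, and then combine $\phi(\P_f)\subset\P_f$ with $\sigma(\P_f)=\P_b$ to upgrade the inclusions to equalities. You merely spell out in more detail the step the paper compresses into ``From Lemma \ref{lemma:billiard_map_reversible_on_bigger_set} we obtain the equality.''
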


\begin{proof}
First note that $\sigma(\P) = \P$ because of the invariance of $\P'$ and $N$ under $\sigma$.  From Lemma \ref{lemma:billiard_map_reversible_on_bigger_set} we obtain the equalities $\sigma\circ\phi\circ\sigma\circ\phi=\mathrm{id}$ and $\phi\circ\sigma\circ\phi\circ\sigma = \mathrm{id}$ on $\P$. Hence $\phi$ is invertible on $\P$ with $\phi^{-1} = \sigma\circ\phi\circ\sigma$. Lemma \ref{lemma:billard_map_maps_phase_space_into_itself} implies $\phi(\P_f) \subset \P_f$ and $\phi(\P_b) \subset \P_b$. Combining this with $\sigma(\P_f)=\P_b$ resp.~$\sigma(\P_b)=\P_f$ and $\phi^{-1} = \sigma\circ\phi\circ\sigma$ implies $\phi^{-1}(\P_f) \subset \P_f$ and $\phi^{-1}(\P_b) \subset \P_b$. This proves the Corollary.
\end{proof}

It is helpful to distinguish between an orbit of the symplectic billiard map $\phi$, which is a sequence of pairs $(x_k,x_{k+1})_{k\in\Z} \subset \P$, and a trajectory, i.e.~the corresponding sequence of points $(x_k)_{k\in\Z} \subset P_- \sqcup P_+$. In order to formalize this we denote by
\begin{equation}\nonumber
\begin{aligned}
\pi_1: (P_-\times P_+) \sqcup (P_+\times P_-) &\to P_- \sqcup P_+\\
\pi_2: (P_-\times P_+) \sqcup (P_+\times P_-) &\to P_- \sqcup P_+
\end{aligned}
\end{equation}
the projection to the first resp.~second factor.

\begin{definition}\label{definition:symplectic_billiard_trajectory} $ $
\begin{enumerate}  \itemsep=1ex 
\item The symplectic billiard orbit of $(x_0,x_1) \in \P$ is the sequence $(\phi^k(x_0,x_1))_{k\in\Z} \subset\P$.
\item The symplectic billiard trajectory $(x_k)_{k\in\Z}\subset P_- \sqcup P_+$ starting at $(x_0,x_1)\in \P$ is obtained by setting $x_k:= \pi_1(\phi^k(x_0,x_1))$. In particular,
    $$
    (x_k,x_{k+1}) = \phi(x_{k-1},x_k) ,\, k\in \Z, \text{ and } (x_k,x_{k+1}) = \phi^{-1}(x_{k+1},x_{k+2}) ,\, k\in \Z.
    $$
\item We call $(x_{2k})_{k\in\Z} \subset P_\pm$ the even trajectory and $(x_{2k-1})_{k\in\Z}\subset P_\mp$ the odd trajectory of $(x_0,x_1)$.
\item We denote by $(x_k^\sigma)_{k\in\Z} := (x_{-k+1})_{k\in\Z}$ the backward symplectic billiard trajectory starting at $(x_0,x_1)$. 
\end{enumerate}
 
\end{definition}

The notation of the backward trajectory is motivated by the following lemma.

\begin{lemma}
The backward trajectory $(x_k^\sigma)_{k\in\Z}$ starting at $(x_0,x_1)\in \P$ coincides with the forward trajectory $(y_k)_{k\in \Z}$ starting at $(y_0,y_1):=  (x_1,x_0)=\sigma(x_0,x_1) $. 
\end{lemma}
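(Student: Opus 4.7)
The plan is to reduce the statement to a simple algebraic identity supplied by Corollary \ref{corollary:invertible_on_phase_space}, namely $\phi^{-1} = \sigma \circ \phi \circ \sigma$. Since $\sigma^2 = \mathrm{id}$, a straightforward induction on $|k|$ will yield
\[
\phi^{-k} = \sigma \circ \phi^k \circ \sigma \qquad \text{for all } k \in \Z,
\]
or equivalently $\phi^k \circ \sigma = \sigma \circ \phi^{-k}$. Everything takes place inside $\P$, which is invariant under both $\phi^{\pm 1}$ and $\sigma$ by Corollary \ref{corollary:invertible_on_phase_space}, so all compositions below are well-defined.

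Next I would unfold the two trajectories as iterates of $\phi$. By Definition \ref{definition:symplectic_billiard_trajectory}, the relation $(x_k, x_{k+1}) = \phi(x_{k-1}, x_k)$ gives $(x_k, x_{k+1}) = \phi^k(x_0, x_1)$ for every $k \in \Z$; in particular $x^\sigma_k = x_{-k+1} = \pi_1(\phi^{-k+1}(x_0, x_1))$. On the other hand, the forward trajectory $(y_k)_{k\in\Z}$ starting at $(y_0, y_1) = \sigma(x_0, x_1) = (x_1, x_0)$ satisfies
\[
(y_k, y_{k+1}) = \phi^k(x_1, x_0) = \phi^k \circ \sigma(x_0, x_1) = \sigma \circ \phi^{-k}(x_0, x_1) = \sigma(x_{-k}, x_{-k+1}) = (x_{-k+1}, x_{-k}).
\]
Reading off the first coordinate gives $y_k = x_{-k+1} = x^\sigma_k$ for every $k \in \Z$, which is exactly the claim.

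I do not foresee a real obstacle here; the only thing to be careful about is to verify the identity $\phi^{-k} = \sigma \circ \phi^k \circ \sigma$ cleanly from the base case (this is just $\phi^{-1} = \sigma \circ \phi \circ \sigma$ combined with $\sigma^2 = \mathrm{id}$) and to point out that $\sigma(x_0, x_1) \in \P$, which follows from $\sigma(\P) = \P$, as noted at the beginning of the proof of Corollary \ref{corollary:invertible_on_phase_space}.
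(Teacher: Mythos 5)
Your proposal is correct and follows essentially the same route as the paper: both rest on the identity $\phi^{-k}=\sigma\circ\phi^k\circ\sigma$ together with $\sigma(\P)=\P$, and then read off the first coordinate of $\phi^k(\sigma(x_0,x_1))=\sigma(\phi^{-k}(x_0,x_1))$; the paper merely phrases the final bookkeeping via the projection identities $\pi_1=\pi_2\circ\sigma$ and $\pi_2\circ\phi^k=\pi_1\circ\phi^{k+1}$ instead of writing out the pair $(x_{-k+1},x_{-k})$. No gaps.
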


\begin{proof}
Combining $\pi_1 = \pi_2 \circ \sigma $ and $\pi_2\circ\phi^{k} = \pi_1\circ\phi^{k+1}$ with $\phi^{-k} = \sigma \circ \phi^k\circ\sigma $, $k\in \Z$, we see
    \begin{equation*}
        \begin{split}
            y_k &= \pi_1(\phi^k(y_0,y_1)) = \pi_1(\phi^k(\sigma(x_0,x_1))) = \pi_2(\sigma(\phi^k(\sigma(x_0,x_1)))) \\
            &= \pi_2(\phi^{-k}(x_0,x_1)) = \pi_1(\phi^{-k+1}(x_0,x_1)) = x_{-k+1}.
        \end{split}
    \end{equation*}
\end{proof}

\begin{remark}\label{remark:(semi-)finite_trajectories}
We recall that for $(x_0,x_1) \in \P_\mathrm{max}$ the symplectic billiard trajectory might only be defined for a finite number of (forward and/or backward) iterations. Nevertheless, we still call $(x_k)_{k_1\leq k\leq k_2}$ the symplectic billiard trajectory starting at $(x_0,x_1)$ where the number  $k_1\leq0$ of backward iterations resp.~$k_2-1\geq1$ of forward iterations is chosen to be minimal resp.~maximal. Note that $(x_0,x_1) \in \P$ if and only if $k_1 = -\infty$ and $k_2 = \infty$, by definition of $\P$.
\end{remark}

\begin{remark}
We point out that the symplectic billiard map commutes with affine transformations of the plane, as in \cite{Albers_Tabachnikov_Introducing_symplectic_billiards}. Of course, the same affine transformation has to be applied to both polygons $P_\pm$ at the same time in the two table setting.

Moreover, individually translating / scaling one polygon does not change the symplectic billiard rule.
\end{remark}

Next, we recall two lemmas from the single table setting in \cite{Albers_Banhatti_Sadlo_Schwartz_Tabachnikov_2025} that immediately generalize to two polygons. 
Firstly there is a piecewise constant area form on $\P'$ which amounts to weighting its various rectangular components by numbers coming from the angles of the polygons. The explicit weights can be found in an earlier version \cite{albers2024symplecticbilliardspairspolygons} of this article.

\begin{lemma}[Lemma 2.1 in \cite{Albers_Banhatti_Sadlo_Schwartz_Tabachnikov_2025}]\label{lemma:T_piecewise_affine}
The map $\phi$ is area preserving with respect to the measure induced from restricting the area form above to $\P$.
\end{lemma}

As in \cite{Albers_Banhatti_Sadlo_Schwartz_Tabachnikov_2025} we assign to a symplectic billiard trajectory the bi-infinite sequence of the edges that this trajectory hits. This sequence is called the \textit{symbolic trajectory}. Equivalence classes of points in the phase space with the same symbolic trajectory are called \textit{tiles}. In particular, $\phi$ maps tiles to tiles.
It turns out, see Lemma \ref{lemma:positive_tiles_are_periodic}, that tiles are \textit{phase rectangles}, that is, subsets of $\P$ of the form $xy\times wz$, $\{x\}\times wz$, $xy\times \{w\}$, or $\{x\}\times \{w\}$. 

\begin{lemma}\label{lemma:connected_is_contained_in_tile}
Every arc-wise connected component of $\P$ is contained in a tile. 
\end{lemma}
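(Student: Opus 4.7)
The plan is to show that the symbolic trajectory is a locally constant function of the initial pair $(x,y)\in\P$. Since a locally constant function on an arc-wise connected set is constant, this immediately yields that every such component sits inside a single tile.

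First I would record the trivial input. For every $(x,y)\in\P$ we have $\P\subset\P'$, so neither $x$ nor $y$ is a vertex, and each of them lies in the relative interior of a uniquely determined edge of $P_\pm$ respectively $P_\mp$. These edge indices depend locally constantly on $(x,y)$, so the $0$-th and $1$-st letters of the symbolic trajectory are locally constant functions on $\P$.

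The core step is to show that $\phi:\P\to\P$ is continuous and that, moreover, the edge index of $\pi_2(\phi(x,y))$ is itself locally constant. Fix $(x_0,y_0)\in\P$ and write $\phi(x_0,y_0)=(y_0,z_0)$. By Lemma \ref{lemma:billard_map_maps_phase_space_into_itself} and Remark \ref{rmk:P_m_setminus_N=P_prime_setminus_N}, the point $z_0$ is not a vertex and hence lies in the relative interior of some edge $e_k^\pm$. The transversality $\det(\nu_{x_0},\nu_{y_0})\neq 0$ makes the affine line $x_0+T_{y_0}P_\mp$ meet $e_k^\pm$ transversely at $z_0$, and the condition $x_0z_0\subset\Int(P_\pm)$ is open in $(x,y)$. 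Consequently, for $(x,y)$ close to $(x_0,y_0)$ in $\P$, the line $x+T_yP_\mp$ intersects $e_k^\pm$ in a unique point $z$ close to $z_0$ satisfying $xz\subset\Int(P_\pm)$; by uniqueness in condition [A] this $z$ is exactly $\pi_2(\phi(x,y))$. This simultaneously delivers continuity of $\phi$ at $(x_0,y_0)$ and the locally constant dependence of the output edge on the input.

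Iterating, continuity and local constancy of edge indices propagate to $\phi^k$ for every $k\in\Z$, where Corollary \ref{corollary:invertible_on_phase_space} supplies the argument for $k<0$. Thus every letter of the symbolic trajectory is a locally constant function on $\P$, and the symbolic trajectory itself is locally constant. The main obstacle, compared to the convex single-polygon situation, is precisely the perturbation step above: in a non-convex setting one must ensure that the output still lands on a single well-defined edge under a small perturbation, which would fail near a vertex. This is exactly what the removal of the discontinuity set $N$ guarantees, via $z_0\notin V_\pm$.
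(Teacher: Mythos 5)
Your proof is correct and takes essentially the same route as the paper's, which simply observes that the symbolic trajectory can only change along a path if some trajectory hits a vertex, impossible for points of $\P$; you merely spell out the underlying continuity and local-constancy argument in detail. One small correction: the transversality of the line $x_0+T_{y_0}P_\mp$ with the edge containing $z_0$ follows from $\det(\nu_{y_0},\nu_{z_0})\neq 0$, which Lemma \ref{lemma:unique_intersection_point} guarantees because $z_0$ is not a vertex, rather than from $\det(\nu_{x_0},\nu_{y_0})\neq 0$ as you state.
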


\begin{proof}
The symbolic trajectory can only change along a path of starting points if one of the corresponding symplectic billiard trajectories hits a vertex. We recall that for points in $\P$ we can iterate the symplectic billiard map infinitely many times and never hit a vertex of $P_-\sqcup P_+$. Thus the symbolic trajectory is constant along a path in $\P$.
\end{proof}

\begin{lemma}[Lemma 2.3 in \cite{Albers_Banhatti_Sadlo_Schwartz_Tabachnikov_2025}]
\label{lemma:positive_tiles_are_periodic}
Tiles are phase rectangles. If a tile is a genuine phase rectangle, that is, has a non-zero area, then its symbolic trajectory is periodic. Furthermore, every orbit in this tile is periodic. More precisely, let $M$ be a tile of positive area with a periodic symbolic trajectory of period $n$. Then $\phi^n$ maps $M$ to itself, and the return map $\phi^n$ has either order 4, or order 2, or it is the identity.
\end{lemma}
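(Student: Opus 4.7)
The engine of the proof is Lemma~\ref{lemma:T_piecewise_affine}: along any fixed symbolic trajectory the recursion $x_{k+1}=a_k x_{k-1}+b_k$ holds, with $a_k,b_k$ determined by the symbolic trajectory. This decouples the even from the odd indices: $x_{2k}$ is an affine function of $x_0$ alone and $x_{2k+1}$ is an affine function of $x_1$ alone. From this, membership of $(x_0,x_1)\in\P$ in the tile with symbolic trajectory $(i_k)_{k\in\Z}$, which is the bi-infinite system of constraints $x_k\in e_{i_k}$, reduces to independent interval constraints on $x_0$ from the even $k$ and on $x_1$ from the odd $k$. Hence the tile equals $I\times J$ for (possibly degenerate) intervals $I\subset e_{i_0}$ and $J\subset e_{i_1}$, which is exactly a phase rectangle.

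Now assume $M=I\times J$ has positive area. The map $\phi$ preserves the area form $\sin\alpha\,dx\wedge dy$ (Lemma~\ref{lemma:T_piecewise_affine}), which has finite total mass on the phase space. Since $\phi$ is a bijection of $\P$ by Corollary~\ref{corollary:invertible_on_phase_space} that shifts the symbolic trajectory by one, each iterate $\phi^k(M)$ is itself a tile of the same positive area. Tiles with distinct symbolic trajectories are disjoint by definition, so if all $\phi^k(M)$ were distinct their total area would be infinite; hence $\phi^n(M)=M$ for some minimal $n>0$, and that $n$ is the period of the symbolic trajectory. Consequently every orbit in $M$ is periodic.

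To determine the order of $\phi^n|_M$, which is affine, I split on the parity of $n$. In the two-polygon formalism $\phi$ swaps the two components of $(P_-\times P_+)\sqcup(P_+\times P_-)$, forcing $n$ to be even; the decoupling then yields $\phi^n(x_0,x_1)=(Ax_0+B,\,Cx_1+D)$. Since this is an area-preserving bijection of $I\times J$ that splits as a product of affine bijections of $I$ and of $J$, and since every affine bijection of a bounded interval to itself is either the identity or the central reflection, $A,C\in\{\pm1\}$; hence $\phi^n$ is the identity or has order $2$. The order-$4$ case listed in the lemma corresponds to the single-table setting $P_-=P_+$ with odd symbolic period, where the decoupling instead produces the swap-type form $\phi^n(x_0,x_1)=(Px_1+Q,\,Rx_0+S)$; squaring reproduces the diagonal form with $PR=\pm1$, so $\phi^{2n}$ has order at most $2$ and $\phi^n$ has order $2$ or $4$.

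The most delicate step is the pigeonhole argument, which needs that each $\phi^k(M)$ is literally one tile (not several pieces of tiles) and that distinct tiles genuinely carry disjoint mass. Both facts rest on the bijectivity of $\phi$ on $\P$, on $\phi$ mapping tiles to tiles by shifting the symbolic trajectory, and on the area preservation from Lemma~\ref{lemma:T_piecewise_affine}. Once these ingredients are in place, the remaining order analysis is pure affine geometry of a rectangle.
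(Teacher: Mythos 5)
The paper does not actually prove this lemma --- it is imported verbatim from \cite{Albers_Banhatti_Sadlo_Schwartz_Tabachnikov_Polygonal_symplectic_billiards} --- and your argument reconstructs essentially the proof given in that source: the decoupled affine recursion from Lemma \ref{lemma:T_piecewise_affine} turns membership in a tile into a product of constraints on $x_0$ and on $x_1$, the finite invariant measure together with pairwise disjointness of distinct tiles forces the shifted tiles $\phi^k(M)$ to recur, and the return map is an affine bijection of a rectangle whose diagonal (respectively anti-diagonal, in the single-table odd-period case) form yields order $1$, $2$ or $4$. All of these steps are sound, and your observation that in the strict two-table formalism the symbolic period is forced to be even, so that the return map has order at most $2$ and the order-$4$ case only arises from the single-table encoding with odd symbolic period, is a correct sharpening.

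The one step that does not automatically survive the paper's extension to \emph{non-convex} tables is the very first one. For non-convex $P_\pm$, the requirement that the trajectory land on the edge $e_{i_{k+1}}$ at step $k+1$ is not merely the interval condition ``$A_k x_0 + B_k$ lies in $e_{i_{k+1}}$'': condition [A] additionally demands that the open segment from $x_{k-1}$ to the predicted point be contained in $\Int(P_\pm)$, and when an edge is partially occluded by a reflex vertex the set of admissible $x_0$ can be a union of several intervals. The equivalence class of a symbolic trajectory could then a priori be a disjoint union of phase rectangles rather than a single one. This does not damage the dynamical conclusions --- each connected piece is still a rectangle, and your measure-theoretic and order arguments apply to it unchanged --- but the literal assertion ``tiles are phase rectangles'' (and with it Corollary \ref{cor:connected_components_are_tiles}) then needs either convexity of the tables or a reading of ``tile'' as an arc-wise connected component. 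This is a gap you inherit from the paper's claim that the lemma ``immediately generalizes'' to its setting, not one introduced by your write-up.
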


The above lemmas immediately imply the following corollary.

\begin{corollary}\label{cor:connected_components_are_tiles}
The  arc-wise connected components of $\P$ are precisely the tiles.
\end{corollary}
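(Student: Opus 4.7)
The plan is to combine the two previous lemmas, Lemma \ref{lemma:connected_is_contained_in_tile} and Lemma \ref{lemma:positive_tiles_are_periodic}, by checking that each of them supplies exactly one of the two inclusions needed for equality of tiles and arc-wise connected components. Lemma \ref{lemma:connected_is_contained_in_tile} already states that every arc-wise connected component of $\P$ is contained in some tile. The remaining task is to show that, conversely, every tile is contained in a single arc-wise connected component of $\P$, for which it suffices to prove that tiles are themselves arc-wise connected.

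First, I would record that tiles partition $\P$: by definition, they are the equivalence classes of points in $\P$ under the relation of having identical symbolic trajectory, so any two distinct tiles are disjoint. Next, I would invoke Lemma \ref{lemma:positive_tiles_are_periodic}, which says that every tile is a phase rectangle, i.e.\ of the form $xy\times wz$, $\{x\}\times wz$, $xy\times\{w\}$, or $\{x\}\times\{w\}$. Each of these four shapes is a product of two subsets of $P_-\sqcup P_+$ which are either a single point or an open segment on a common edge, and is therefore manifestly arc-wise connected. Hence every tile $T$ is arc-wise connected, and so $T$ is contained in a single arc-wise connected component $C$ of $\P$.

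To conclude, I would chase the inclusions. Given a tile $T$, pick the arc-wise connected component $C$ of $\P$ with $T \subset C$; by Lemma \ref{lemma:connected_is_contained_in_tile} there exists a tile $T'$ with $C \subset T'$, so $T \subset T'$. Because tiles partition $\P$ and $T$ and $T'$ share a point, $T = T'$, which forces $T = C$. This shows that every tile is an arc-wise connected component, and combined with Lemma \ref{lemma:connected_is_contained_in_tile} in the other direction it gives that tiles and arc-wise connected components coincide. There is no real obstacle here; the only thing to be careful about is the bookkeeping of the two inclusions and the use of the partition property of the equivalence classes.
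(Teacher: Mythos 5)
Your argument is correct and matches the paper's intended reasoning: the paper derives the corollary immediately from Lemma \ref{lemma:connected_is_contained_in_tile} together with the fact from Lemma \ref{lemma:positive_tiles_are_periodic} that tiles are phase rectangles (hence arc-wise connected), exactly as you do. Your write-up just makes explicit the inclusion-chasing and the partition property that the paper leaves implicit.
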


As in \cite{Albers_Banhatti_Sadlo_Schwartz_Tabachnikov_2025} we call a periodic symplectic billiard orbit \textit{isolated} if its tile has zero area in phase space. Note that an isolated periodic orbit does not mean that there exists a neighborhood of its starting points $(x_0,x_1) \in \P$ in which none of the orbits are periodic with the same period.

\subsection*{Critical points and filled set of vertices}
For our criteria for periodicity in the next section, we need the following sets. 
We will define the set of critical points $C$. First we give an explanation, then a formal definition. Let $(x_k)_{k\in\Z}$ be any symplectic billiard trajectory hitting a vertex, i.e.~ $x_K\in V_\pm$ for some $K\in\N$. Then $x_{K-2\ell}\in C$ for all $\ell\geq 0$. This description simplifies for convex polygons $P_-$ and $P_+$ since then we may reverse symplectic billiard trajectories hitting a vertex, see Remark \ref{remark:bigger_set_reversible}. Thus, for convex polygons the set $C$ consists of all points along even trajectories that start in vertices. Finally, we need to add all vertices in which no trajectories end. This happens for instance when $P_-=P_+$ is a square.
\begin{remark}\label{remark:intuition_for_C}
This description also leads to the following iterative procedure for determining the set $C$ of critical points for convex polygons. Start with an initial vertex $v_i^\pm \in P_\pm$ and an (open!) edge $v_j^\mp v_{j+1}^\mp \subset P_\mp$. Map the entire segment $\{v_i^\pm\} \times v_j^\mp v_{j+1}^\mp$ repeatedly by the symplectic billiard map $\phi$ until the first time the image contains a vertex. Thus, this image is split by vertices $v_{k_1}^\mp,\dots,v_{k_n}^\mp$ into segments. Trace these vertices back to points $c_1,\ldots,c_n$ in the initial segment $v_j^\mp v_{j+1}^\mp$. Note $c_1,\ldots,c_n \in C$. Now start over with the subsegments $\{v_i^\pm\} \times c_k c_{k+1}$, $k=0,\ldots, n$ where $c_0:= v_j^\mp$ and $c_{n+1}:= v_{j+1}^\mp$. This iterative procedure stops only if the even trajectory hits a vertex. We repeat this process for all initial vertices. This process is demonstrated in Examples \ref{example:quad}  and \ref{example_unbounded_periodic}.

For a non-convex polygon $P$ there is a ``non-convex vertex'', i.e.~a vertex that lies in the interior of the convex hull of the polygon, as for example in Figure \ref{fig:map_not_defined}. In this case there may exist trajectories that end in this vertex but for which we cannot consider a trajectory starting in this vertex in the reversed direction because the billiard map is not well-defined in this case, that is, the map is not reversible. In this situation we need to consider both choices of starting a billiard trajectory and continue as in the convex case.
\end{remark}

Here is the formal definition of $C$.

\begin{definition}\label{def:critical_points}
We define the set of critical points
    $$
    C := \bigcup_{i \in \N_0} \pi_2(N_{2i}) \subset P_- \sqcup P_+. 
    $$
    Furthermore we call 
    $$
    C^\# := \big((C \cap P_-) \times P_+\big) \cup \big((C\cap P_+) \times P_-\big) \cup \big(P_-\times (C\cap P_+)\big) \cup \big(P_+ \times (C\cap P_-)\big)
    $$
    the $C$-grid.
\end{definition}
We point out that, by definition, $C^\#$ contains the discontinuity set $N$, i.e.
$$
N\subset C^\#.
$$ 
It is also helpful to consider the filled set of vertices $F$ defined next. This set will be of importance also in the periodicity criterion Theorem \ref{theorem:F_finite} below. The idea is to go from all vertices of $P_\pm$ in all directions tangent to $P_\mp$ and collect the intersections with $P_\pm$ and repeat the process. In contrast, in $C$ we only take directions of actual symplectic billiard trajectories into account. From the above description it is then clear that the filled set of vertices contains the set $C$ of critical points, i.e.~$C\subset F$.

\begin{definition}\label{def:filled_set_of_vertices}
We set $V_0^\pm:=V_\pm$ and define recursively for $i \in \N$
\begin{equation*}
    V_i^\pm := \Big\{ v \in P_\pm \Big| \begin{array}{c}
    \exists w\in V_{i-1}^\pm \text{ \rm such that the segment } vw \text{ \rm satisfies}\\ vw \subset \Int(P)
    \text{ \rm and } vw \text{ \rm is parallel to an edge of } P_\mp 
\end{array}\Big\}.
\end{equation*}
The filled set of vertices is
\begin{equation*}
    F := F_- \sqcup F_+ := \bigcup_{i\in\N_0}V_i^- \sqcup \bigcup_{i\in\N_0} V_i^+.
\end{equation*}
\end{definition}

\begin{lemma}\label{lemma:F_C_countable_N_null set}
The set $C$ of critical points and the filled set of vertices $F$  are countable. The set $C^\#$ is a null set.
\end{lemma}

\begin{proof}
There are countably many points in the filled set of vertices $F$ since each subset of the form $V_i^\pm$ has finitely many elements. Thus, $C\subset F$ implies that $C$ is countable, too. This, in turn, implies that the $C$-grid, that is, the set $C^\#$, is a null set. Note that from $N\subset C^\#$, we also again obtain that $N$ is a null set.
\end{proof}

\subsection*{Three examples}

\begin{example}\label{example:quad}
The first example is the Quad which has been studied in \cite{Albers_Banhatti_Sadlo_Schwartz_Tabachnikov_2025}. In particular, this is the single table setting; however, we adopt the two table perspective for visualization. 

We use the algorithm from Remark \ref{remark:intuition_for_C} to determine the set of critical points $C$. That is, we pick a vertex $x_0$ and consider all trajectories starting in $x_0$. For the chosen example after applying the symplectic billiard map once the image contains a vertex. Tracing back leads to the point $c\in C$, see Figure \ref{fig:quad_c_bestimmen_0} at the top. We repeat this process with the subsegments until the even trajectory hits a vertex, see Figure \ref{fig:quad_c_bestimmen_0} at the bottom.

\begin{figure}[ht]
    \centering
    \includegraphics[width=.65\linewidth]{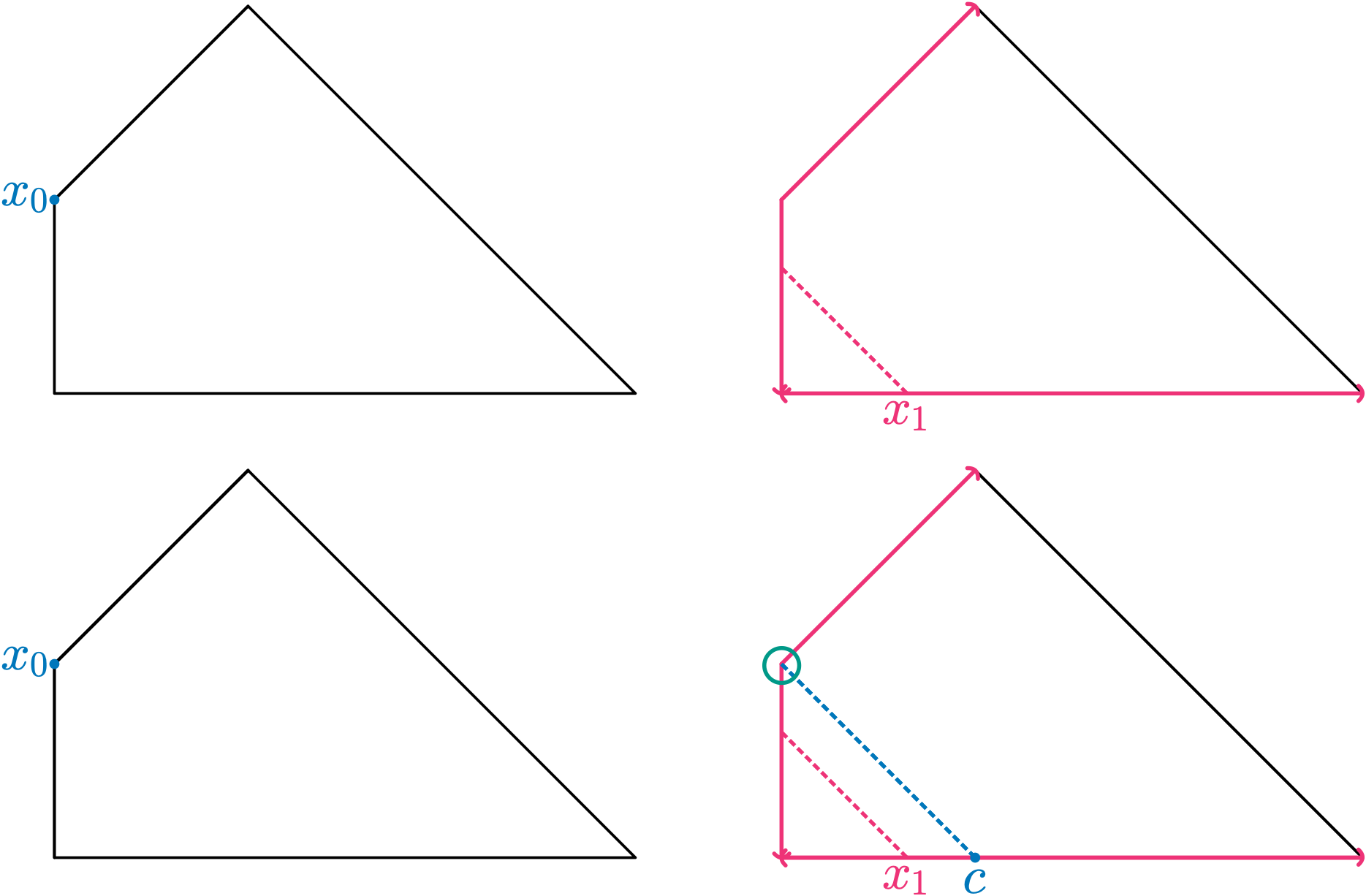}\\[4ex]
    \includegraphics[width=.65\linewidth]{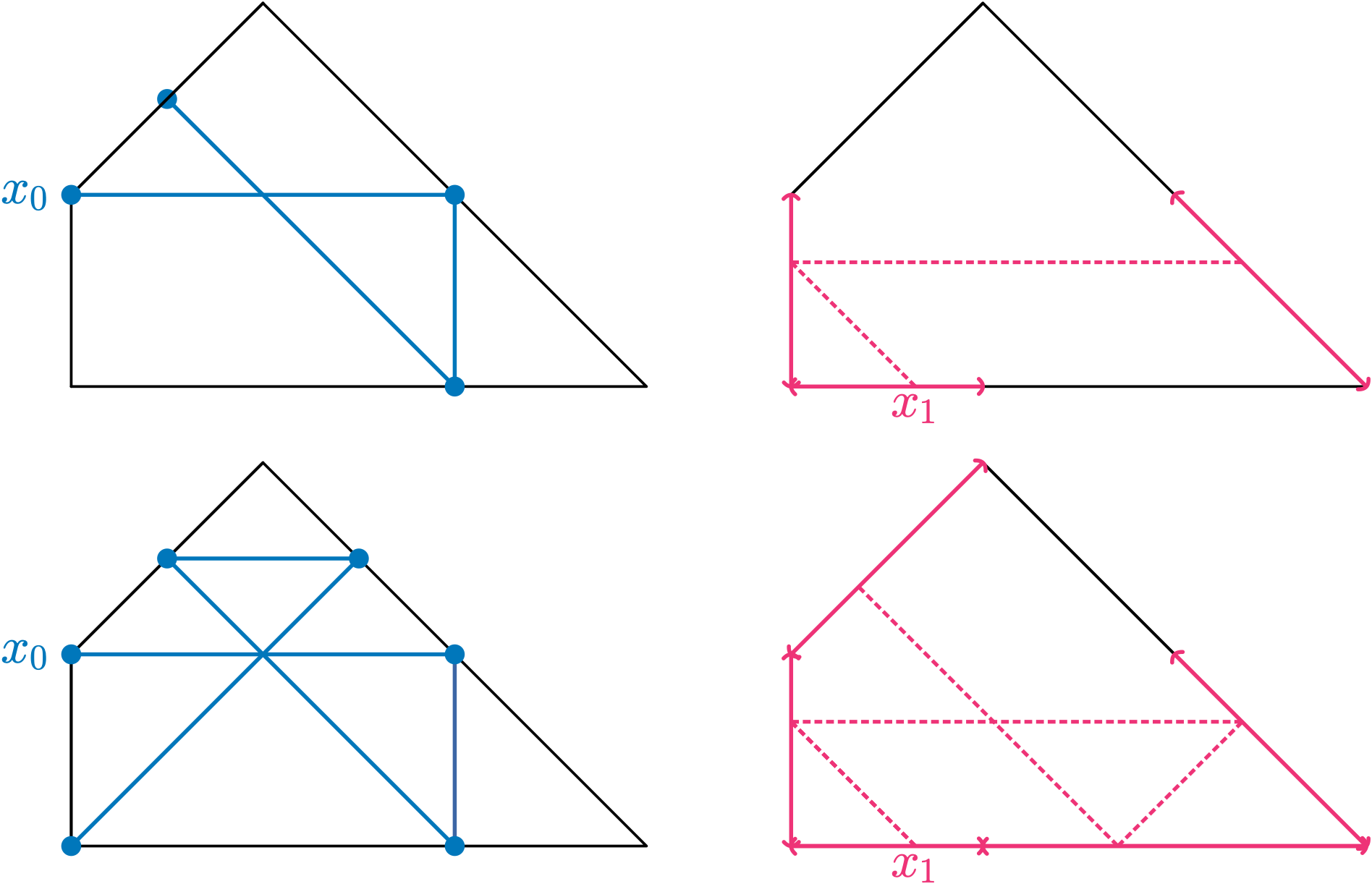}
\caption{Determining critical points of the Quad by starting trajectories in vertices.}
    \label{fig:quad_c_bestimmen_0}
\end{figure}

To determine $C$ we collect all the points on the even trajectory. It is not necessary to also collect the points that split segments as these will be added  when starting at other vertices. 

\begin{figure}[h]
    \centering
    \includegraphics[width=0.55\linewidth]{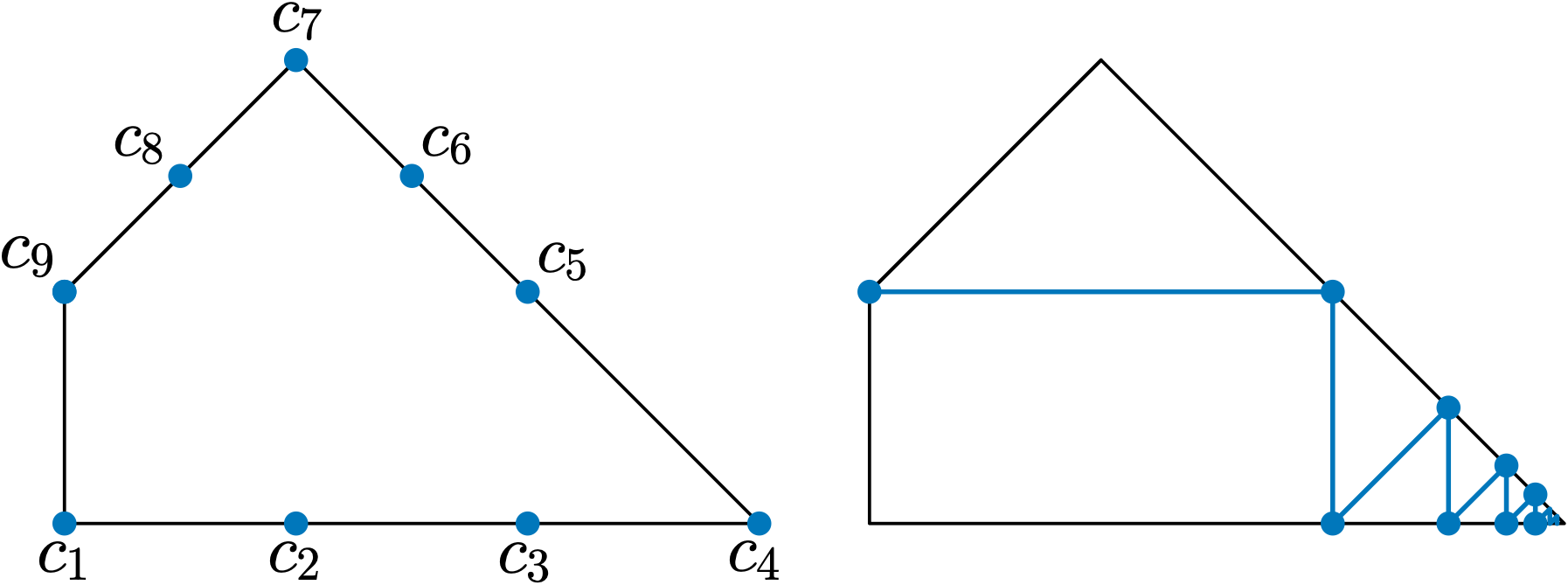}
\caption{\small On the left hand is displayed the set of critical points of the Quad. On the right hand we show how, iteratively following those directions, to generate an infinite subset of the filled set of vertices of the Quad. Note that this is not a trajectory.}
\label{fig:C_grid_quad_sub}
\end{figure}

Repeating this for all edges and vertices yields the set $C$ of critical points, displayed in Figure \ref{fig:C_grid_quad_sub} for the Quad. In contrast, Figure \ref{fig:C_grid_quad_sub} shows that the filled set of vertices $F$ of the Quad is infinite.

Furthermore, the collection of black lines in Figure \ref{fig:C_grid_quad} on the left displays the discontinuity set $N$, the white ``tiles'' are the actual tiles and their union is the phase space $\P$. Finally, the collection of black lines in Figure \ref{fig:C_grid_quad} on the right displays the $C$-grid. In particular, the Quad shows that, in general, the inclusions $C\subset F$ and $N\subset C^\#$ are strict.

\begin{figure}[ht]
    \begin{subfigure}{.38\linewidth}
        \centering
        \includegraphics[width=1.1\linewidth]{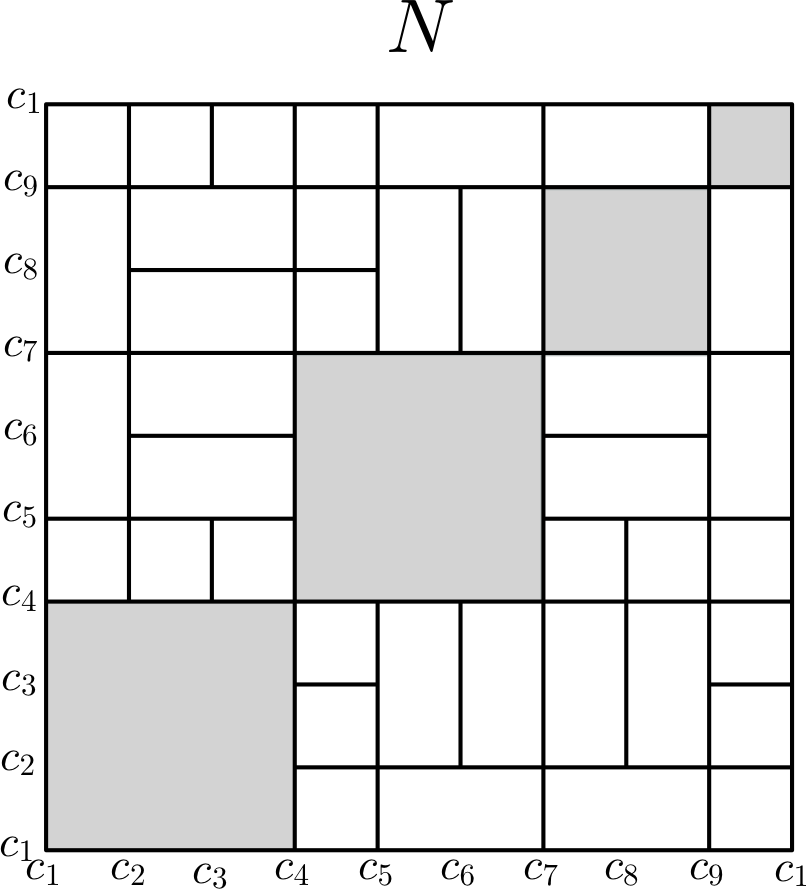}
    \end{subfigure}%
    \hspace{10mm} 
    \begin{subfigure}{.38\linewidth}
        \centering
        \includegraphics[width=1.1\linewidth]{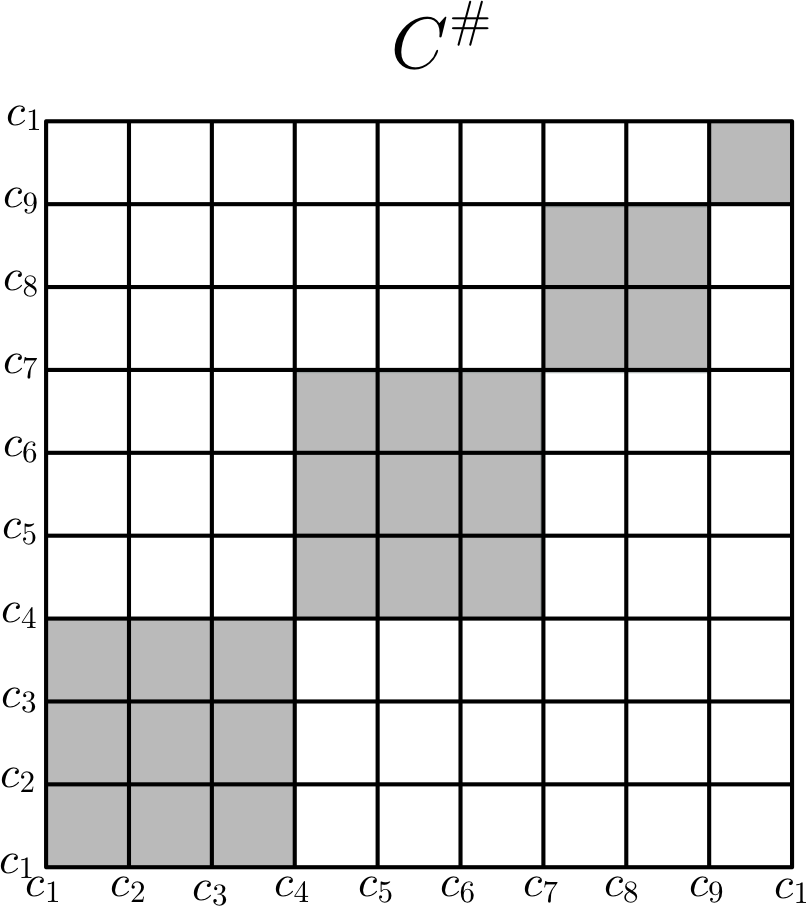}
    \end{subfigure}
    \caption{\small The discontinuity set $N$ (black lines) and the phase space of the Quad (white part), made up of tiles (connected white regions), and the $C$-grid (black lines). Note the strict inclusion $N\subset C^\#$.} 
    \label{fig:C_grid_quad} 
\end{figure}

\end{example}

\begin{example}
As a second example we look at the pair of polygons $(P_-,P_+)$ displayed in Figure \ref{fig:disjoint_C_grid_sub1}. In this particular example, the set of critical points $C\subset P_-\sqcup P_+$ and the filled set of vertices $F$ coincide. They are labeled by $c_1,\dots,c_4$ and $d_1,\dots,d_5$, see Figure \ref{fig:disjoint_C_grid_sub1}. 

\begin{figure}[ht]
    \centering
    \centering
    \includegraphics[width=0.6\linewidth]{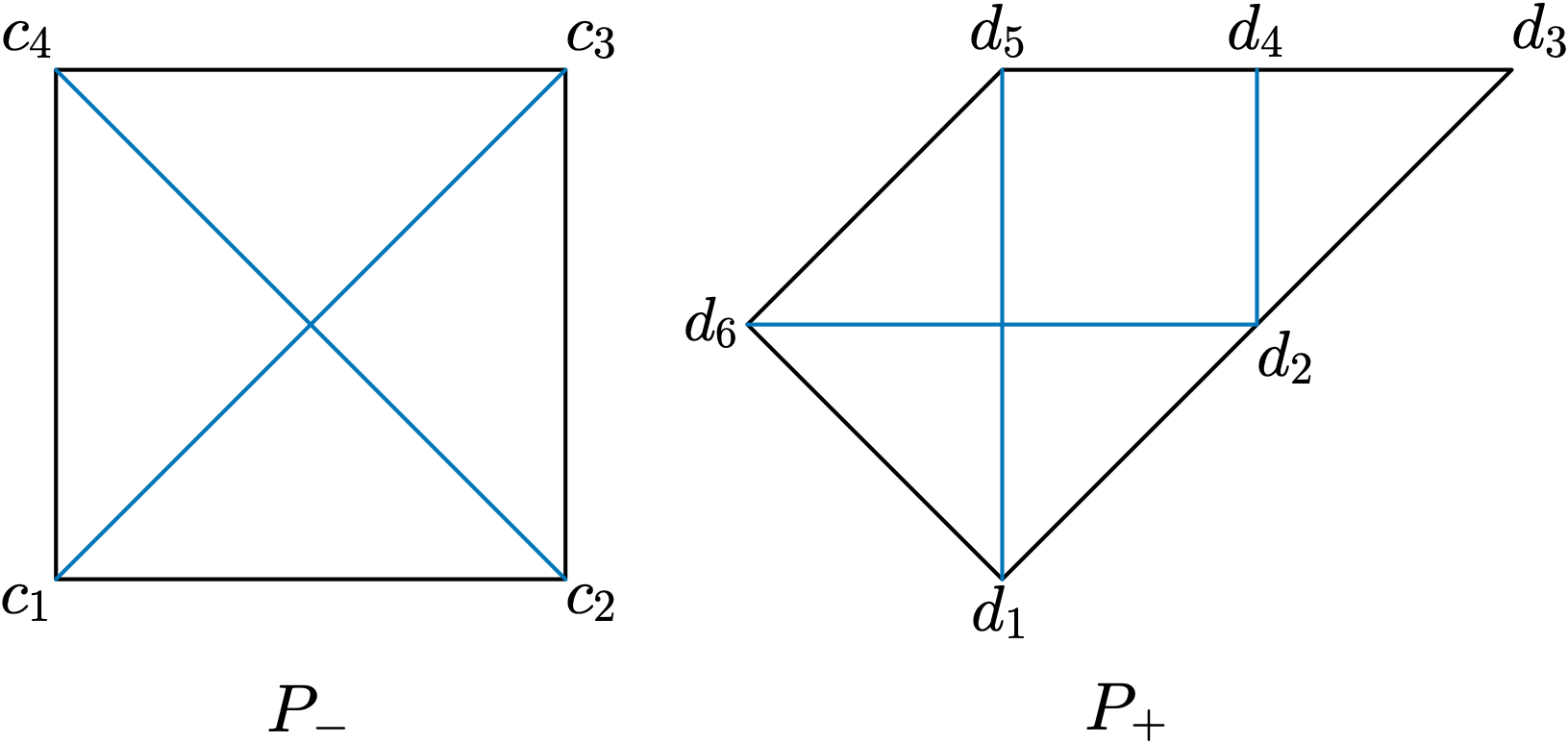}
    \caption{The filled set of vertices and the set of critical points coincide in this example.}
    \label{fig:disjoint_C_grid_sub1}
\end{figure}

The collection of black lines in Figure \ref{fig:disjoint_C_grid_sub3} on the left is the discontinuity set $N$, the white ``tiles'' are again the actual tiles and their union is the phase space $\P$. The grey shaded area is excluded since the corresponding sides are parallel and the symplectic billiard map is not defined. The collection of black lines in Figure \ref{fig:disjoint_C_grid_sub3} on the right displays the $C$-grid.

\begin{figure}[ht]
    \centering
    \includegraphics[width=0.85\linewidth]{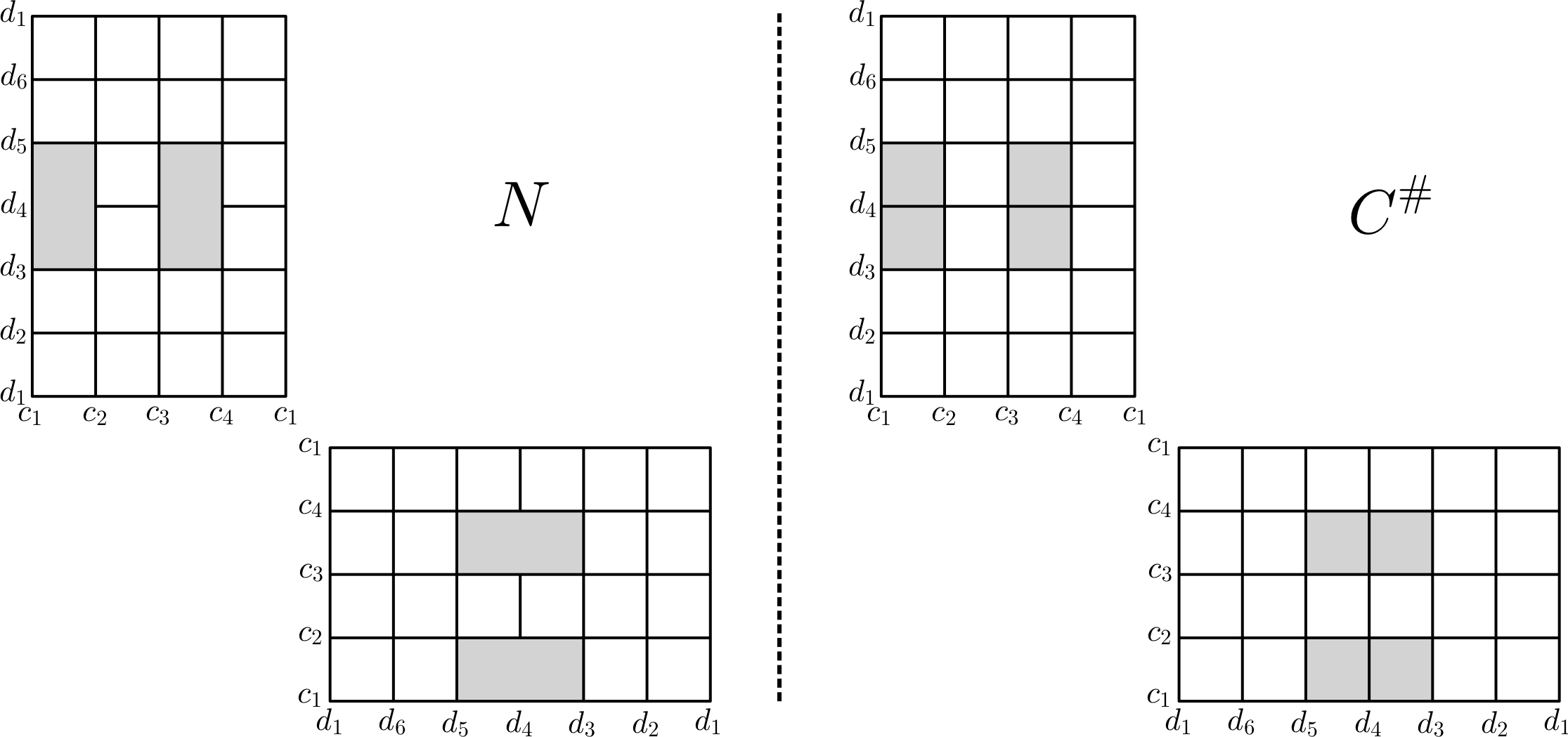}
    \caption{The discontinuity set and the phase space $\P$, made up of tiles, together with the $C$-grid.}
    \label{fig:disjoint_C_grid_sub3}
\end{figure}

\end{example}

\begin{example}\label{example_unbounded_periodic}
The third example consists of a square $P_-$ with vertices $(0,0)$, $(4,0)$, $(4,4)$, $(0,4)$ and a rhombus $P_+$ with  vertices $(6,5)$, $(8,1)$, $(12,-1)$, $(10,3)$.  We apply Remark \ref{remark:intuition_for_C} to determine the set of critical points. To set up our argument, consider the sequences $(a_i)_{i\in\N}$ and $(b_i)_{i\in\Z}$ as in Figure \ref{fig:C_infinite_explanation_sub1}. 

\begin{figure}[ht]
        \includegraphics[width=0.75\linewidth]{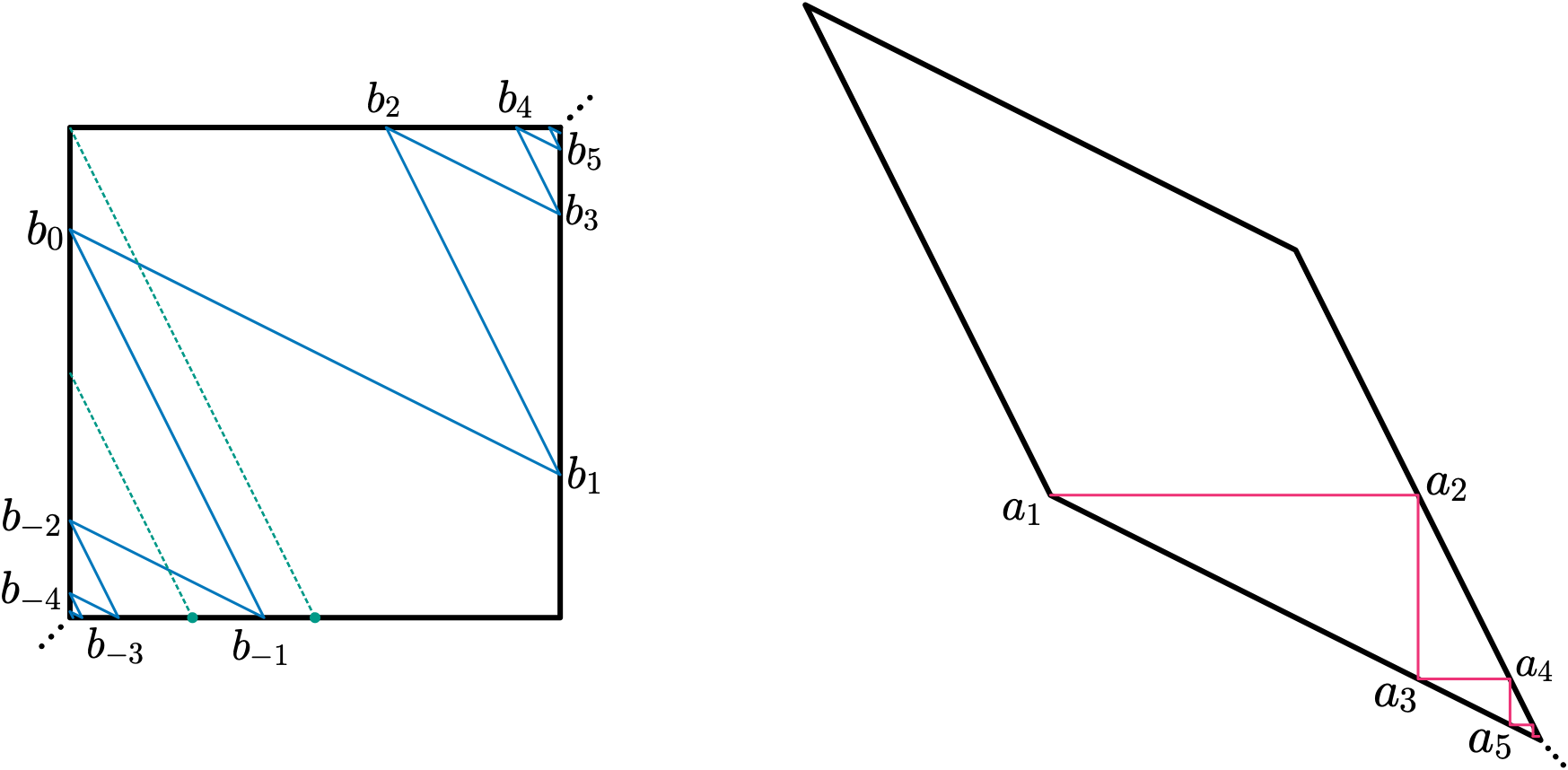}
        \caption{The sequences $(a_i)_{i\in\N}$ and $(b_i)_{i\in\Z}$.}
        \label{fig:C_infinite_explanation_sub1}
\end{figure}

We consider a trajectory starting in the vertex $a_1 = x_0$. The symplectic billiard trajectory starting in $(x_0,x_1) = (a_1, b_{-1})$ is the (in this case finite) sequence 
$$
(x_0,x_1,x_2,x_3,x_4,x_5,x_6,x_7,x_8) = (a_1,b_{-1},a_2,b_0,a_3,b_1,a_2,b_2,a_1),
$$ 
see Figure \ref{fig:C_infinite_explanation_sub2}. 

\begin{figure}[ht]
        \includegraphics[width=0.75\linewidth]{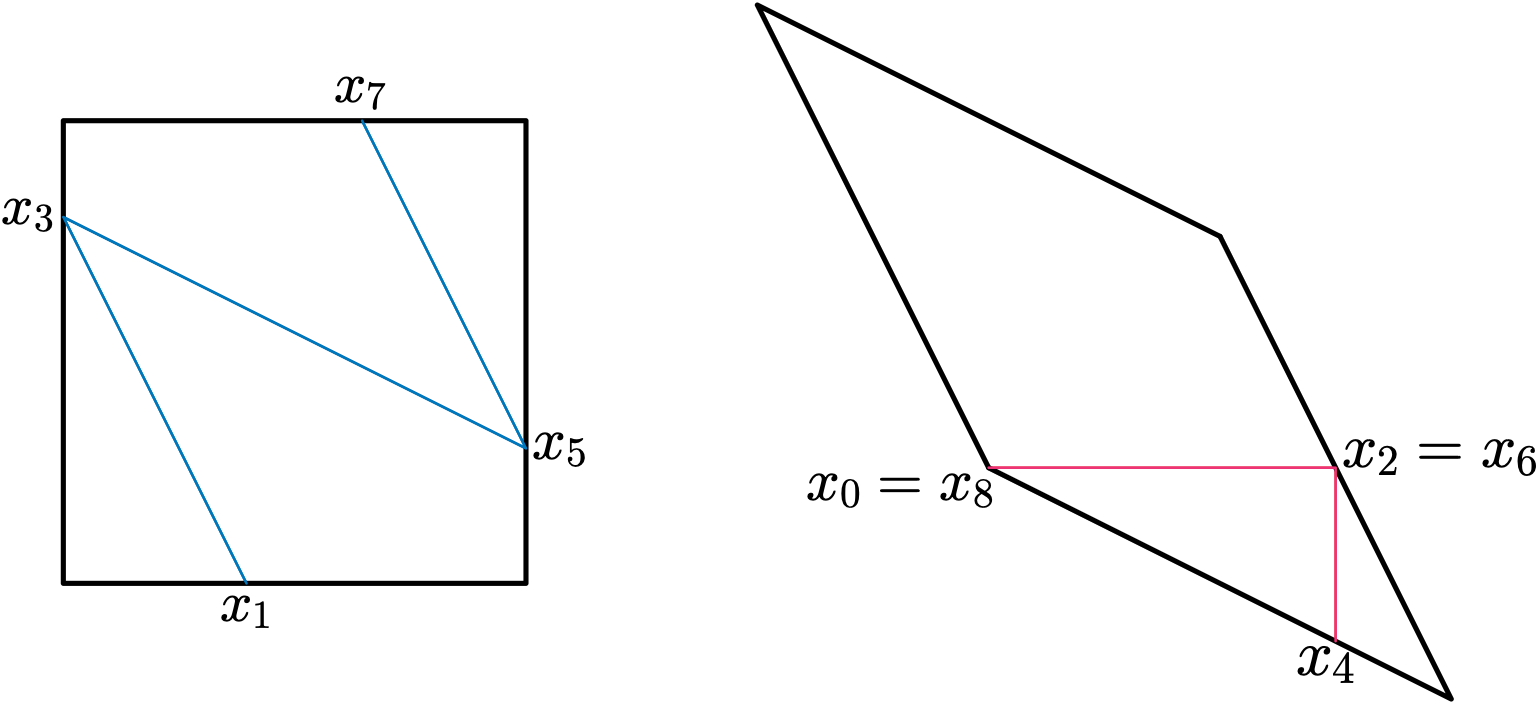}
    \caption{The finite trajectory $(x_n)$.}
            \label{fig:C_infinite_explanation_sub2}
\end{figure}

Note that altering $x_1$ within the green points in Figure \ref{fig:C_infinite_explanation_sub1} at $b_{-1}$ does not change the symbolic trajectory nor the even trajectory. If we instead choose $x_1=b_{-3}$ then the (again finite) trajectory has the following form 
$$
(x_n)_{n=0,\ldots,16} = (a_1,b_{-3},a_2,b_{-2},a_3,b_{-1},a_4,b_0,a_5,b_1,a_4,b_2,a_3,b_3,a_2,b_4,a_1).
$$
The symbolic and even trajectories do not change when altering $x_1$ within a range of $b_{-3}$ that is similar to that with the green points before. Repeating arguments like this yields the set of critical points $C$ as shown in Figure \ref{fig:C_infinite_periodic_sub1}.

\begin{figure}[ht]
        \includegraphics[width=.70\linewidth]{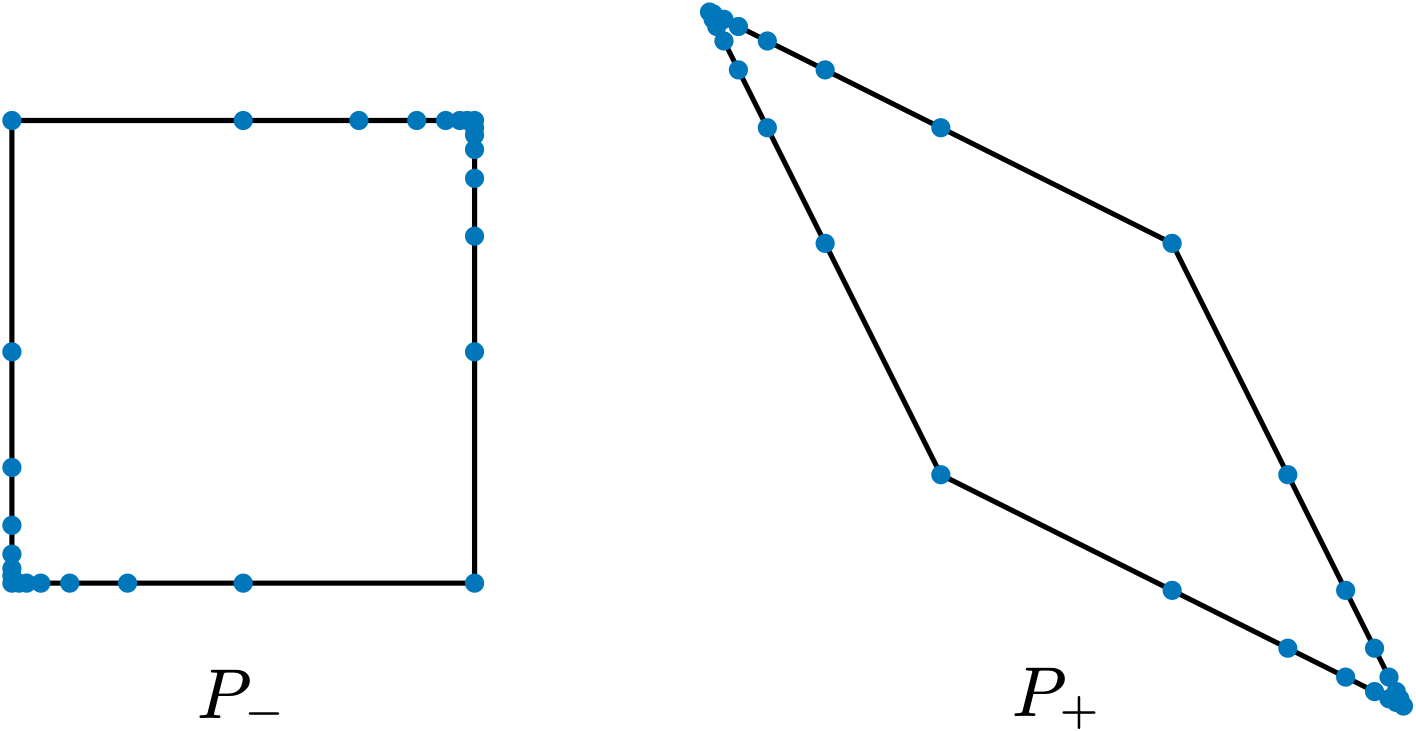}
        \caption{The set of critical points.}
        \label{fig:C_infinite_periodic_sub1}
\end{figure}

The $C$-grid is displayed in Figure \ref{fig:C_infinite_periodic_sub2}.
  
\begin{figure}[ht]
        \includegraphics[width=.6\linewidth]{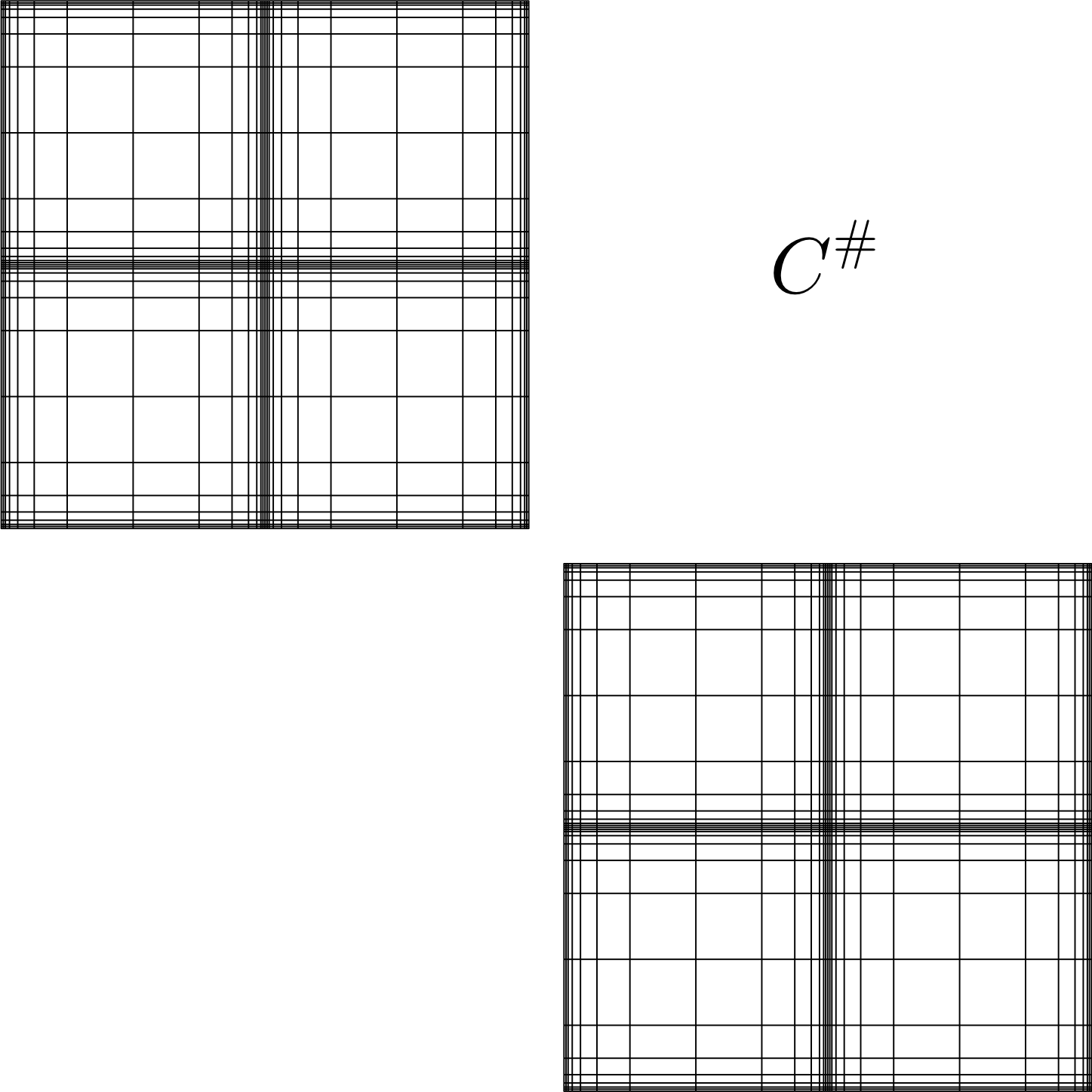}
        \caption{The $C$-grid.}
        \label{fig:C_infinite_periodic_sub2}
\end{figure}     

\end{example}

In the next section we will prove that if $C$ is finite then the symplectic billiard map is uniformly bounded fully periodic, i.e.~every orbit is periodic with a uniform period bound, see Theorem \ref{theorem:periodicity-criterion}.  In the current example the set $C$ is infinite but has the property that every limit point is a vertex. We will see that this implies that the symplectic billiard map is still fully periodic, however without a uniform period bound. In this specific example this easily follows directly from the above considerations. 

\begin{remark}\label{rmk:glossary}
For convenience we collect here all pertinent sets and their interpretation. 
\begin{itemize}   \renewcommand{\labelitemi}{-}  
\item $\P_\mathrm{max}=\{(x,y) \in (P_- \times P_+)\sqcup (P_+\times P_-)\mid  (x,y) \text{ satisfies condition A}\}$. This is the set of pairs on which one iteration of the symplectic billiard map is well-defined. 
\item $\P'=\{(x,y) \in (P_- \setminus V_- \times P_+\setminus V_+) \sqcup (P_+\setminus V_+ \times P_-\setminus V_- ) \mid\det(\nu_x,\nu_y)  \neq  0\}$. This is the set of pairs of non-vertices on which one iteration of the symplectic billiard map is well-defined. Note that for $(x,y)\in\P'$ the point $z$ in $\phi(x,y)=(y,z)$ may be a vertex.
\item $N$ is the discontinuity set, see Definition \ref{definition:iterable_phase_space}. This is the set of points which under forward or backward iteration of the symplectic billiard map hit a vertex.
\item $\P=\P_\mathrm{max}\setminus N=\P'\setminus N$ is the phase space, i.e.~the maximal set on which the symplectic billiard map is a bijection. In particular, we can iterate the symplectic billiard map forward and backward arbitrarily often.
\item $\P_f := \{(x,y) \in \P \mid \det(\nu_x,\nu_y) > 0\}$ is the forward phase space.
\item $\P_b := \{(x,y) \in \P \mid \det(\nu_x,\nu_y) < 0\}$ is the backward phase space.
\item $C$ is the set of critical points, see Definition \ref{def:critical_points}. These are the points in $P_\pm$ which are part of a trajectory hitting a vertex in $P_{\pm}$, i.e.~after an odd number of iterations of the symplectic billiard map $\phi$.
\item $C^\#$ is a grid generated by $C$, see Definition \ref{def:critical_points}. It contains $N$.
\item $F$ is the filled set of vertices. This is the subset of $P_+\sqcup P_-$ obtained as follows. We start from all vertices of $P_\pm$ in all directions tangent to $P_\mp$ and collect the intersections with $P_\pm$ and repeat the process. It contains $C$.
\end{itemize}

\end{remark}


\section{Criteria for periodicity}\label{section:criteria}

We begin by defining the following sets of pairs of polygons $P_-$ and $P_+$ and their corresponding phase space $\P=\P(P_-,P_+)$ equipped with the symplectic billiard map. We consider the sets of nowhere periodic, fully periodic, uniformly bounded fully periodic symplectic billiards, and those with isolated periodic orbits.
\begin{equation}\nonumber
\begin{aligned}
NP &:= \{(P_-,P_+)\mid\text{There is no periodic orbit in }\P(P_-,P_+).\}\\[.5ex]
FP &:= \{(P_-,P_+)\mid\text{Every orbit in } \P(P_-,P_+) \text{ is periodic}.\}\\[.5ex]
BP &:= \{(P_-,P_+)\in FP\mid  \text{There is a uniform upper bound on all periods.}\}\\[.5ex]
IP &:= \{(P_-,P_+)\mid \text{There is an isolated periodic orbit in }\P(P_-,P_+).\}
\end{aligned}
\end{equation}
Clearly, $BP \subset FP$ and the sets $NP$, $FP$ and $IP$ are pairwise disjoint. Each of these sets is non-empty. 
Indeed, in \cite{Albers_Tabachnikov_Introducing_symplectic_billiards} and \cite{Albers_Banhatti_Sadlo_Schwartz_Tabachnikov_2025} there are several examples of convex polygons for which every symplectic billiard orbit on the phase space is periodic and the period is uniformly bounded. One of these is the Quad, see Figure \ref{fig:C_grid_quad}. Hence $BP$ is non-empty. 
In Section \ref{section:kite} we will see that there is a family of convex polygons that have isolated periodic orbits, so $IP$ is non-empty.
In Section \ref{section:necktie} we will see that $NP$ is also non-empty. 
Finally, Example \ref{example_unbounded_periodic} provides an example in $FP\setminus BP$.
Here is an illustration.

\begin{center}
\begin{tikzpicture}
  \begin{scope}[blend group = soft light]
    \fill[red!30!white,xshift=8cm]  circle (1.5);
    \fill[green!30!white,xshift=12cm] circle (1.5);
    \fill[blue!30!white,xshift=4cm]  circle (1.5);
    \fill[white,xshift=4cm] circle (.75);
  \end{scope}
  \node at (4,0)  {BP};
  \node at (5.125,0) {FP};
  \node at (8,0)   {IP};
  \node at (12,0)   {NP};
\end{tikzpicture} 
\end{center}

In this section we give sufficient criteria for (possibly two and non-convex) polygons that guarantee that every orbit on their phase space is periodic with or without period bounds, i.e.~$(P_-,P_+) \in FP$ or $(P_-,P_+) \in BP$.

Here is a diagram of the results from this section where we use the filled set of vertices $F$ and the set $C\subset F$ of critical points for a pair $(P_-,P_+)$ of polygons, see Definitions \ref{def:filled_set_of_vertices}  and \ref{def:critical_points}.

\begin{center}
\begin{tikzpicture}[node distance=2.0cm, 
    every node/.style={fill=white}, align=center]

  \node (three)         []              {Corollary \ref{corollary:three_directions}};
  \node (F_finite)      [base, below of=three]               
  {$F$ finite};
  \node (C_finite)      [base, below of=F_finite]   
  {$C$ finite};
  \node (limit)         [base, below of=C_finite]   
  {limit points of $C$\\ are vertices};
  \node (closed)        [base, below of=limit]     
  {$C$ closed};
  \node (BP)            [base, right of=C_finite, xshift=7.5cm]    
  {$(P_-,P_+) \in BP$};
  \node (FP)            [base, below of=BP]     
  {$(P_-,P_+) \in FP$};
  \node (P_minus_C)     [base, below of=FP]   
  {every orbit on\\ $\P\setminus C^\#$ is periodic};
  \node (almost_e)          [base, below of=P_minus_C]   
  {almost every orbit\\ on $\P_\mathrm{max}$ is periodic};
                                                  
  \draw[->, double equal sign distance]     (F_finite) -- node[text width=1.5cm, xshift=1cm]
                                  {$C\subset F$}
                                  (C_finite);
  \draw[->, double equal sign distance]     (C_finite) -- (limit);
  \draw[->, double equal sign distance]     (limit) -- (closed);
  \draw[->, double equal sign distance]     (BP) -- (FP);
  \draw[->, double equal sign distance]     (FP) -- (P_minus_C);
  \draw[->, double equal sign distance]     (C_finite) -- node [text width=3cm]
                                    {Theorem \ref{theorem:periodicity-criterion} }
                                    (BP);
  \draw[->, double equal sign distance]     (F_finite)  -| node [xshift=-4.9cm, text width=3cm]
                                    {Theorem \ref{theorem:F_finite}}
                                    (BP);
  \draw[->, double equal sign distance]     (three) -- 
                                    (F_finite);
  \draw[->, double equal sign distance]     (P_minus_C) -- node[text width=2.5cm, xshift =-1.5cm]
                                    {Lemma \ref{lemma:F_C_countable_N_null set}}
                                    (almost_e);
  \draw[->, double equal sign distance]     (limit) -- node[text width = 3cm]{Theorem \ref{theorem:periodicity-criterion} }(FP);                                   
  \draw[->, double equal sign distance]     (closed) -- node[text width = 3cm]{Theorem \ref{theorem:periodicity-criterion} }(P_minus_C);
  \draw[->, double equal sign distance] (C_finite) to[bend left=280] node[midway,xshift=2.75cm]{Example \ref{example:quad} }(F_finite);
  \draw[->, double equal sign distance] (limit) to[bend left=280] node[midway,xshift=2.75cm]{Example \ref{example_unbounded_periodic} } (C_finite);
   \draw[->, double equal sign distance] (FP) to[bend left=80] (BP);
    \draw(1.8,-3.3) -- (2.5,-2.65);
    \draw(1.8,-5.3) -- (2.5,-4.65);
    \draw(7.0,-5.3) -- (7.7,-4.65);
    \draw[<-] (2.6,-3.0) -- (3.4,-3.0);
     \draw[<-] (2.6,-4.98) -- (3.4,-4.98);
     \draw[->] (6.3,-4.98) -- (7.1,-4.98);
  \end{tikzpicture}
\end{center}

Before stating the first theorem we recall the notation 
\begin{equation}\nonumber
F_\pm=F\cap P_\pm.
\end{equation}

\begin{theorem}\label{theorem:F_finite}
If the filled set of vertices $F$ is finite then every symplectic billiard orbit on the phase space $\P$ is periodic. All periods are bounded by $4|F_-||F_+|$.
\end{theorem}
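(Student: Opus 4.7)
The plan is to use the finiteness of $F$ to produce a $\phi$-invariant partition of $\P$ into finitely many open phase rectangles (plus lower-dimensional pieces), then deduce periodicity from Lemma \ref{lemma:positive_tiles_are_periodic}.

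The first step is to extract from Definition \ref{def:filled_set_of_vertices} the following \emph{symmetric} invariance of $F$: for $v,w\in P_\pm$ with $vw\subset\Int(P_\pm)$ parallel to some edge of $P_\mp$, one has $v\in F_\pm$ if and only if $w\in F_\pm$. The forward implication is built into the inductive construction of the $V_i^\pm$, and the reverse follows simply by swapping $v$ and $w$ in the defining clause. With this in hand, I would define the $F$-grid $F^\#:=(F_-\times P_+)\cup(F_+\times P_-)\cup(P_-\times F_+)\cup(P_+\times F_-)$ and note that $N\subset C^\#\subset F^\#$ (since $C\subset F$), so $\P\setminus F^\#=\P'\setminus F^\#$ is the disjoint union of the open phase rectangles $I\times J$, one for each pair of $F$-arcs $I\subset P_\pm$ and $J\subset P_\mp$ sitting on non-parallel edges. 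Since $F_\pm$ partitions $P_\pm$ into $|F_\pm|$ arcs, this collection contains at most $2|F_-||F_+|$ rectangles.

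The symmetric invariance ensures that $\P\setminus F^\#$ is $\phi$-invariant: for $(x,y)\in\P\setminus F^\#$ and $(y,z)=\phi(x,y)$, having $z\in F_\pm$ would force $x\in F_\pm$ (since $xz\subset\Int(P_\pm)$ is parallel to the edge of $P_\mp$ through $y$), contradicting $x\notin F$. Combined with Corollary \ref{corollary:invertible_on_phase_space}, $\phi$ thus permutes the finite collection of open $F$-grid rectangles lying in $\P$. Each such rectangle is connected, hence by Lemma \ref{lemma:connected_is_contained_in_tile} sits inside a single tile of positive area, and Lemma \ref{lemma:positive_tiles_are_periodic} makes every orbit in it periodic. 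The symbolic period $n$ equals the length of the $\phi$-orbit on rectangles, so $n\leq 2|F_-||F_+|$. Because $\phi$ swaps the two halves $(P_-\times P_+)\leftrightarrow (P_+\times P_-)$ this $n$ is even, and a direct computation shows that $\phi^2(x,y)=(z(x),w(y))$ has ``separated'' form on each tile. Iterating, the return map $\phi^n|_M$ factors as $(x,y)\mapsto(Z(x),W(y))$ with affine bijections $Z\colon I\to I$ and $W\colon J\to J$; each of these is either the identity or the unique orientation-reversing affine involution of its interval, so the return map has order at most $2$---rather than $4$---in our pair-of-polygons setting. Hence the orbit period is at most $2n\leq 4|F_-||F_+|$.

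It remains to handle orbits starting in $\P\cap F^\#$. If both $x_0\in F$ and $x_1\in F$, the symmetric invariance forces the entire trajectory $(x_k)_{k\in\Z}$ to lie in the finite set $F$; counting pairs in $(F_-\times F_+)\sqcup (F_+\times F_-)$ gives a period bound of $2|F_-||F_+|$. If exactly one coordinate lies in $F$, say $x_0\in F_\pm$ and $x_1\notin F_\mp$ (with $x_0$ necessarily not a vertex, since otherwise $(x_0,x_1)\in\sigma(N_0)\subset N$ contradicts $(x_0,x_1)\in\P$), then perturbing $x_0$ along its edge away from $F_\pm$ yields a path in $\P$ (using $N\subset F^\#$) from $(x_0,x_1)$ into $\P\setminus F^\#$, so the arc-component of $\P$ containing $(x_0,x_1)$ has positive area and Lemma \ref{lemma:positive_tiles_are_periodic} applies once more. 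The main subtleties I expect are verifying the order-$2$ refinement of the return map (which is what produces the constant $4$ rather than a naive $8$) and confirming that the perturbation argument genuinely yields a path inside $\P$.
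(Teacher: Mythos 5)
Your argument is correct, but it takes a genuinely different route from the paper's own proof of this theorem. The paper tracks a single trajectory directly: it shows that the points of $F$ act as ``guide rails'', i.e.~if $x_i$ lies in the $F$-arc $f_j^\pm f_{j+1}^\pm$ then the endpoints of the arc containing $x_{i+2}$ are exactly the images of $f_j^\pm,f_{j+1}^\pm$ under the translation along $T_{x_{i+1}}P_\mp$, and then uses the intercept theorem to conclude that the ratio $|f_j^\pm-x_i|/|f_j^\pm-f_{j+1}^\pm|$ is invariant along the even (resp.~odd) trajectory; hence each arc is visited at most twice, the orbit is a finite set, and periodicity follows from invertibility. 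Your proof instead builds the $F$-grid and runs the tile machinery --- this is essentially the paper's proof of Theorem \ref{theorem:periodicity-criterion}(III) with $C$ replaced by $F$, which the paper itself points out implies Theorem \ref{theorem:F_finite} since $C\subset F$; so you have in effect rediscovered the stronger argument. The trade-off: the paper's direct proof is more elementary and self-contained, while yours yields structural information (a $\phi$-invariant finite partition into rectangles) that generalizes to the weaker hypotheses of Theorem \ref{theorem:periodicity-criterion}. Two points of detail are worth noting. First, where the paper obtains the constant $4$ (rather than a naive $8$) from the splitting $\P=\P_f\sqcup\P_b$, you obtain it from the order-$2$ refinement of the return map; your refinement is indeed valid, because in the two-table formalism the symbolic period is necessarily even, so $\phi^n$ is separated and each factor is an affine self-bijection of a bounded interval, hence the identity or the flip. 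Second, two steps deserve a line of justification: that $\phi$ maps each open $F$-rectangle \emph{onto} (not merely into) another follows from combining invertibility with connectedness and the disjointness of the rectangles; and for orbits starting in $\P\cap F^\#$ your perturbation argument does produce a path in $\P$ (since $N\subset F^\#$ and $F$ is finite), but to get the stated period bound there you should additionally observe that the positive-area tile you land in contains one of the rectangles, hence is carried to itself by $\phi^m$ with $m$ at most the rectangle count, so the same bound $2m\le 4|F_-||F_+|$ applies.
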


\begin{remark}
The theorem above is actually an immediate corollary of Theorem \ref{theorem:periodicity-criterion} below because the set $C$ of critical points is contained in the filled set of vertices $F$. However, Theorem \ref{theorem:F_finite} is somewhat simpler to prove but gives a weaker period bound. To briefly outline the idea of the proof, recall the sets $V_i^\pm$ from Definition \ref{def:filled_set_of_vertices}. The connecting lines from points in $V_i^\pm$ to the respective points in $V_{i+1}^\pm$ act as ``guide rails'' for the even and odd trajectory. If  $F$ is finite this restricts a trajectory to only visit finitely many points, see Figure \ref{fig:f_trapez}. In particular, every trajectory is periodic. A counting argument gives the upper bound. A full proof can be found in an earlier version \cite{albers2024symplecticbilliardspairspolygons} of this article.

\begin{figure}[ht]
    \centering
    \includegraphics[width=.8\linewidth]{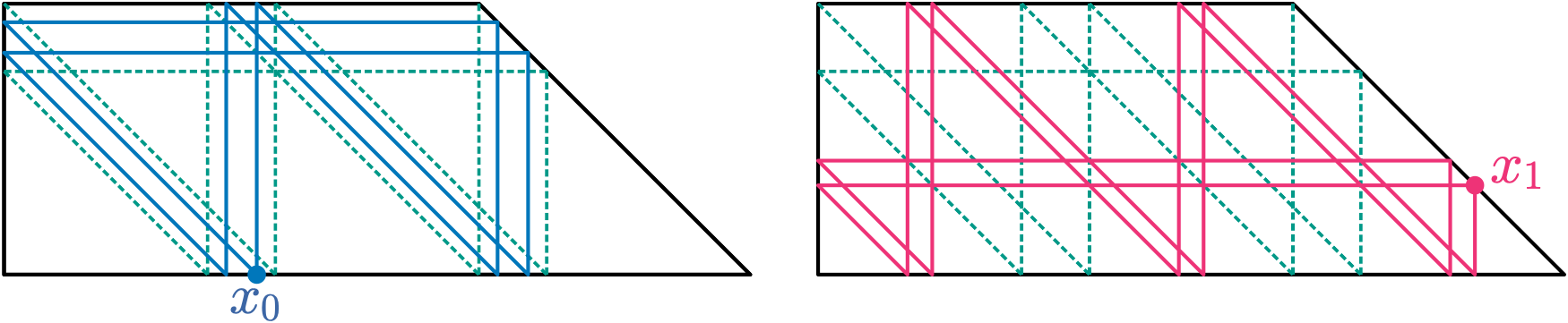}
    \caption{The connecting lines (dashed) from points in $V_i^\pm$ to the respective points in $V_{i+1}^\pm$ act as ``guide rails'' for the billiard trajectory (blue and red) starting at $(x_0,x_1)$.}
    \label{fig:f_trapez}
\end{figure}

In the example of a triangle almost every trajectory visits every edge twice whereas in the case of a square every trajectory visits  edges only once, compare the two cases in Figure \ref{fig:overkill_triangle_quad}. We point out that these examples are in the single table setting; however, in Figure \ref{fig:overkill_triangle_quad} we show even and odd trajectory separately, as usual. These examples show that in certain cases the period bound in Theorem \ref{theorem:periodicity-criterion} may be improved.

\begin{figure}[ht]
        \centering
        \includegraphics[width=.65\linewidth]{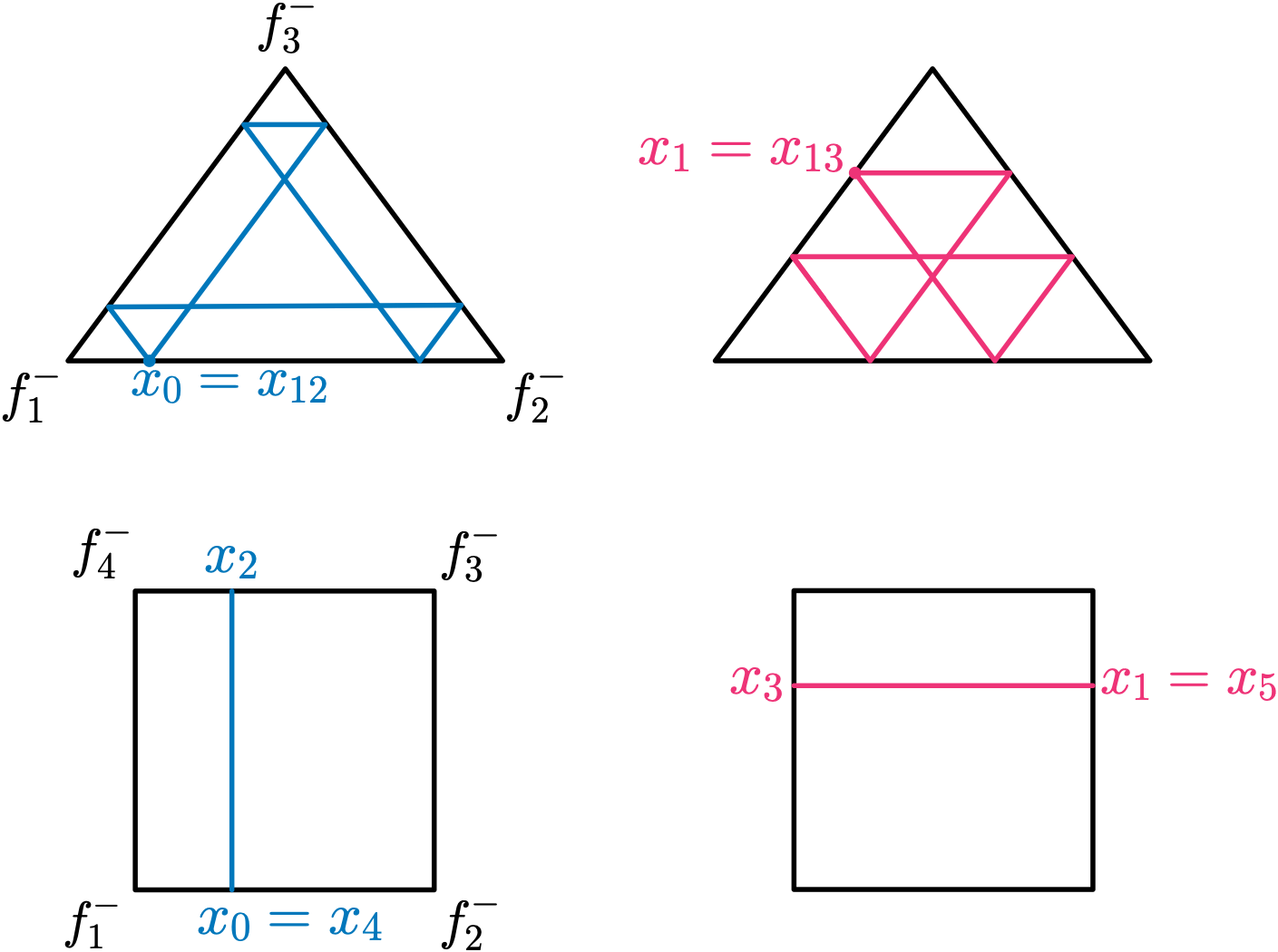}
        \caption{Periodic orbits in a  triangle resp.~in a square.}
        \label{fig:overkill_triangle_quad}
\end{figure}

\end{remark}

Theorem \ref{theorem:F_finite} has the following direct consequence.

\begin{corollary}\label{corollary:three_directions}
Let $(P_-,P_+)$ be a pair of polygons whose vertices lie on the integer lattice, i.e.~ $V_-,V_+ \subset \Z^2$. Assume in addition that $P_-$ and $P_+$ together have at most three distinct tangent directions. Then every symplectic billiard orbit on the phase space is periodic and there is a global bound on the period.
\end{corollary}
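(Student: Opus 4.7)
The plan is to apply Theorem \ref{theorem:F_finite}: once the filled set of vertices $F = F_-\sqcup F_+$ is shown to be finite, every orbit is periodic with period bounded by $4|F_-||F_+|$, which is precisely the statement of the corollary. Hence the task is to show $F$ is finite. By the affine equivariance of the symplectic billiard map (Remark \ref{remark:variational_principle_and_invariant_area_form}), followed by a rescaling by a suitable positive integer, I may assume without loss of generality that the three tangent directions are $d_1 = (1,0)$, $d_2 = (0,1)$, and $d_3 = (p,q)$ for coprime positive integers $p,q$, while $V_-, V_+ \subset \Z^2$ still. (If only two directions occur, the argument is trivial and $F=V$ is finite.)

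The central claim is that $F$ is contained in the lattice $\Lambda := \tfrac{1}{q}\Z \times \tfrac{1}{p}\Z$. I would prove this by induction on the layers $V_i^\pm$ from Definition \ref{def:filled_set_of_vertices}; the base case $V_0^\pm = V_\pm \subset \Z^2 \subset \Lambda$ is immediate. For the inductive step, suppose $v \in \Lambda$ and that the next point $v' = v + t d_k$ lies on an edge of direction $d_\beta$ --- equivalently, on a line $\{w : d_\beta^\perp \cdot w = c\}$ with $c \in \Z$, integrality of $c$ being forced by the lattice hypothesis on the edge endpoints. One then checks $v' \in \Lambda$ by a case-by-case inspection of the six pairs $(k,\beta)$ with $k\ne\beta$. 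For instance, when $k = 1$ and $\beta = 3$, the line $v + \R d_1$ keeps the second coordinate equal to $v_2 \in \tfrac{1}{p}\Z$; its intersection with the edge line $-qx + py = c$ then gives $v'_1 = (p v_2 - c)/q \in \tfrac{1}{q}\Z$, since $p v_2 \in \Z$. The remaining five cases follow the same pattern: in each, the relevant product $p v_2$ or $q v_1$ becomes integral because of the shape of $\Lambda$, and combines with the integer edge constant $c$ to produce denominators compatible with $\Lambda$.

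Since $P_-\sqcup P_+$ is bounded and $\Lambda$ is a discrete subgroup of $\R^2$, the intersection $\Lambda \cap (P_-\sqcup P_+)$ is finite, so $F$ is finite, and Theorem \ref{theorem:F_finite} then yields the uniform period bound. The only technical point requiring care is the case analysis for the invariance of $\Lambda$ under the six combinations of move-direction and edge-direction; the affine normalization and the final appeal to Theorem \ref{theorem:F_finite} are routine.
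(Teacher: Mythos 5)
Your proof is correct and follows essentially the same route as the paper: affine normalization of the tangent directions, showing that the filled set of vertices $F$ lies in a discrete lattice, and then invoking Theorem \ref{theorem:F_finite}. The only difference is cosmetic --- the paper normalizes all three directions to $(1,0),(0,1),(1,1)$ so that $F\subset\Z^2$ follows at once from the unimodularity of each pair of directions, whereas you normalize only two directions and verify containment in $\tfrac{1}{q}\Z\times\tfrac{1}{p}\Z$ by a six-case induction; both are valid.
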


\begin{proof}
As pointed out in Remark \ref{remark:variational_principle_and_invariant_area_form}, symplectic billiards commutes with applying affine transformations to both polygons. If there are three distinct tangent directions we may assume without loss of generality that these directions are 
$$
\begin{pmatrix}
1\\0
\end{pmatrix}, 
\begin{pmatrix}
0\\1
\end{pmatrix}
\text{ and }
 \begin{pmatrix}
1\\1
\end{pmatrix}.
$$
It is clear from Definition \ref{def:filled_set_of_vertices} that $F_\pm$ must now be a subset of the integer lattice $\Z^2$. The case of two tangent directions follows exactly the same way.
\end{proof}

The corollary allows us to construct a plethora of examples of polygonal symplectic billiards that carry only periodic symplectic billiard trajectories
\begin{figure}[ht]
    \centering
    \includegraphics[width = .45\linewidth]{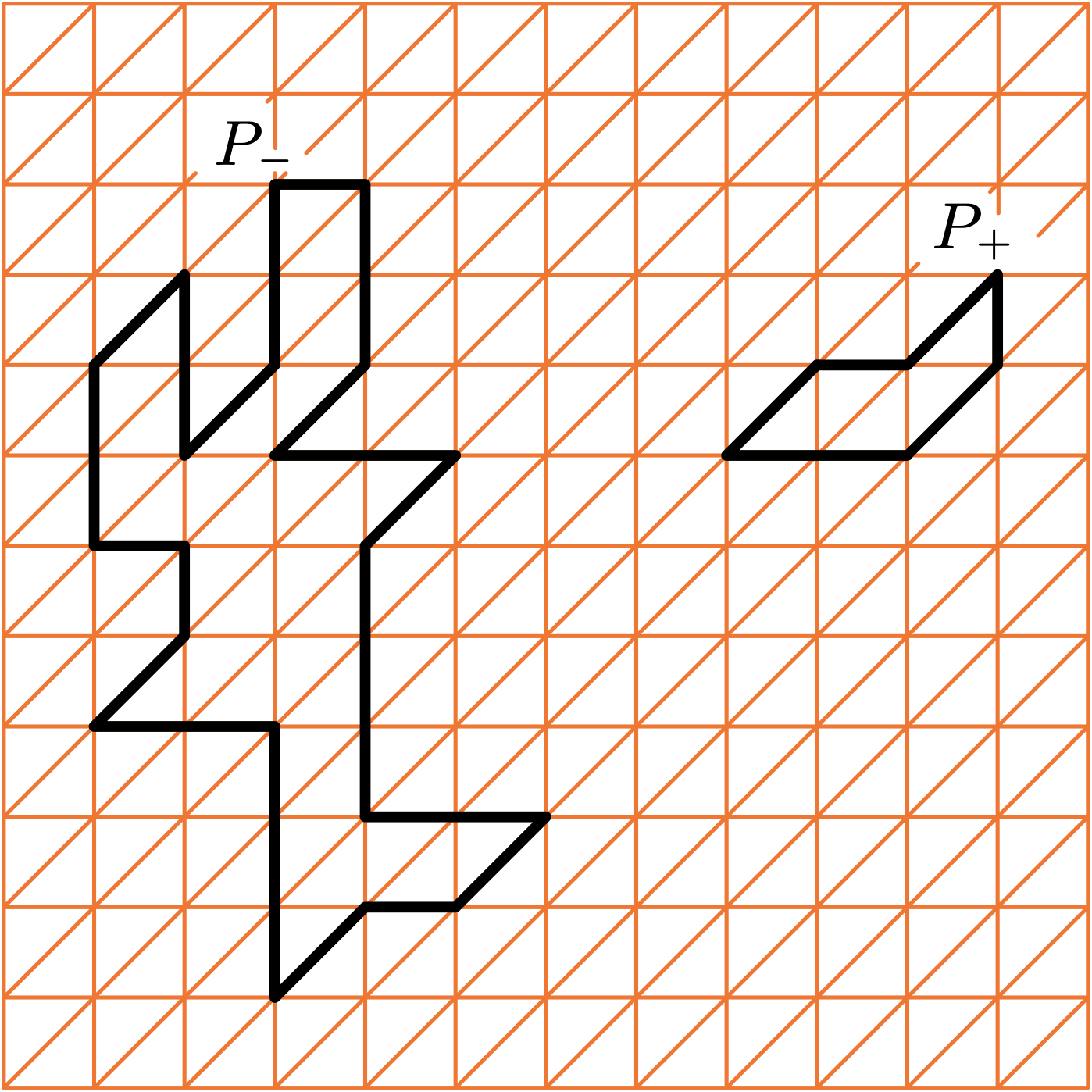}
    \caption{An example for Corollary \ref{corollary:three_directions}}
    \label{fig:three_directions_construction}
\end{figure}
with a uniform bound on the period. For example, any pair of (possibly non-convex) polygons ``inscribed'' into
$$
\left[\Z^2 + \R \begin{pmatrix} 1\\0\end{pmatrix}\right] \cup \left[\Z^2 + \R \begin{pmatrix}0\\1\end{pmatrix}\right] \cup \left[\Z^2 + \R\begin{pmatrix}1\\1\end{pmatrix}\right],
$$ 
see Figure \ref{fig:three_directions_construction}, is fully periodic.

More generally, Theorem \ref{theorem:F_finite} allows us to find examples in BP which are not restricted to the integer lattice and/or three directions. E.g.~in Figures \ref{fig:f_trapez} and \ref{fig:baguette_diamond_F} - \ref{fig:double_egg} we have a couple more examples, in both the single and the two table settings, each having a finite filled set of vertices $F$. In dashed green resp.~orange we show the connecting lines from points in $V_i^\pm$ to the respective points in $V_{i+1}^\pm$.

\begin{figure}[ht]
\centering
\begin{minipage}{\textwidth}
  \centering
  \includegraphics[width=.6\linewidth]{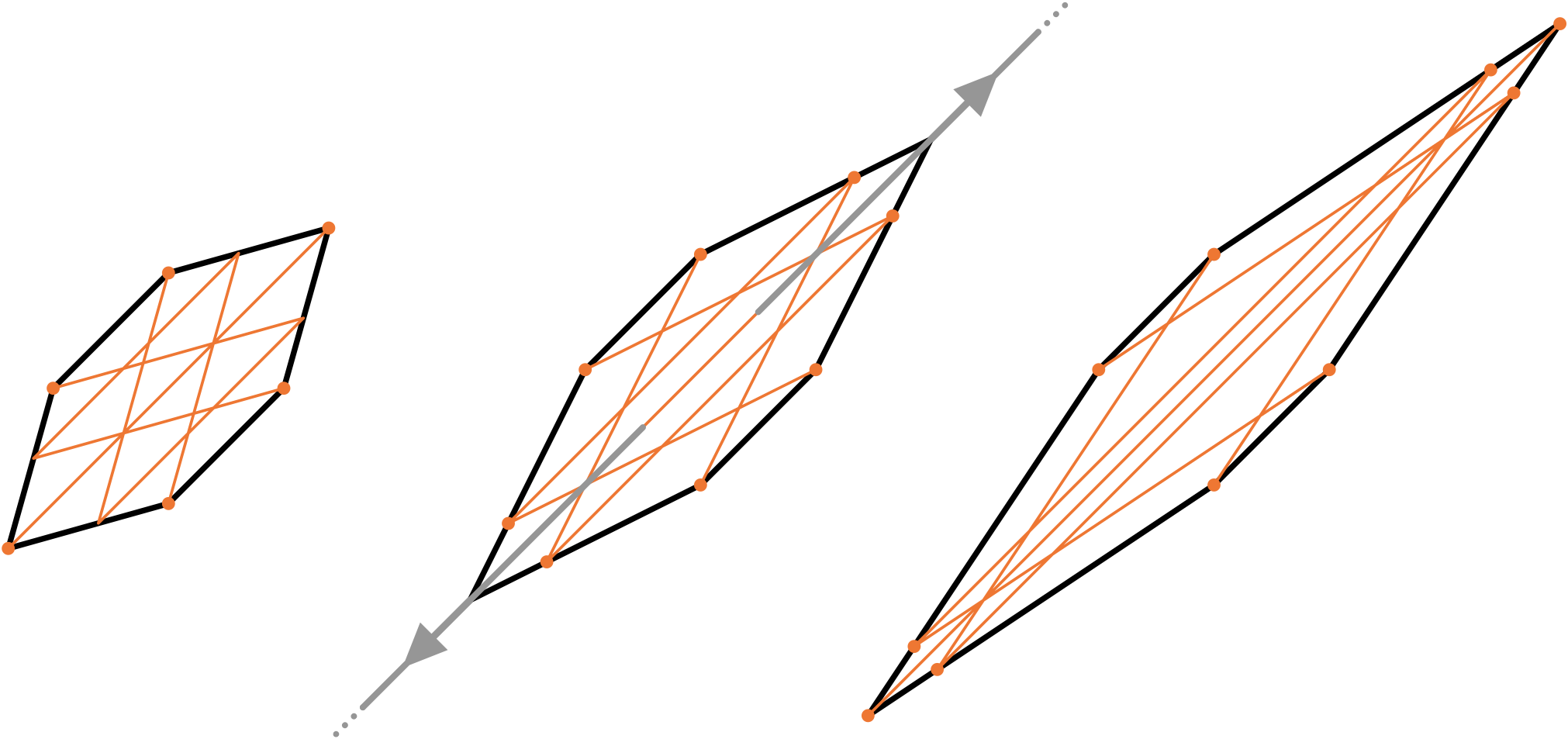}
  \caption{Baguette diamonds. Moving both tips simultaneously along the grey line gives a 1-parameter family of polygons all having a finite filled set of vertices. All of them are in BP.}
    \label{fig:baguette_diamond_F}
\end{minipage}%
\end{figure}

\begin{figure}[ht]
  \centering
  \includegraphics[width=.35\linewidth]{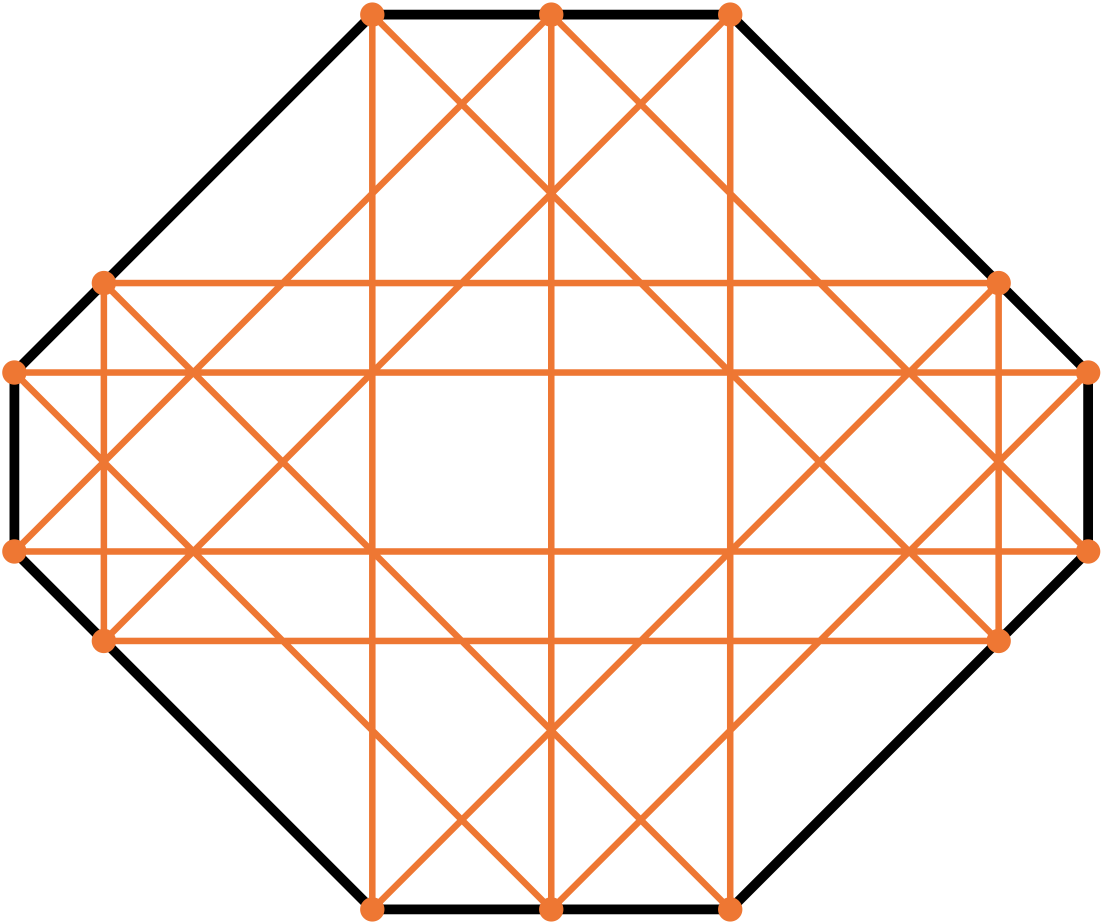}
  \caption{This octagon is in BP.}
    \label{fig:stop_sign_distorted}
\end{figure}

\begin{figure}[ht]
  \centering
  \includegraphics[width=.4\linewidth]{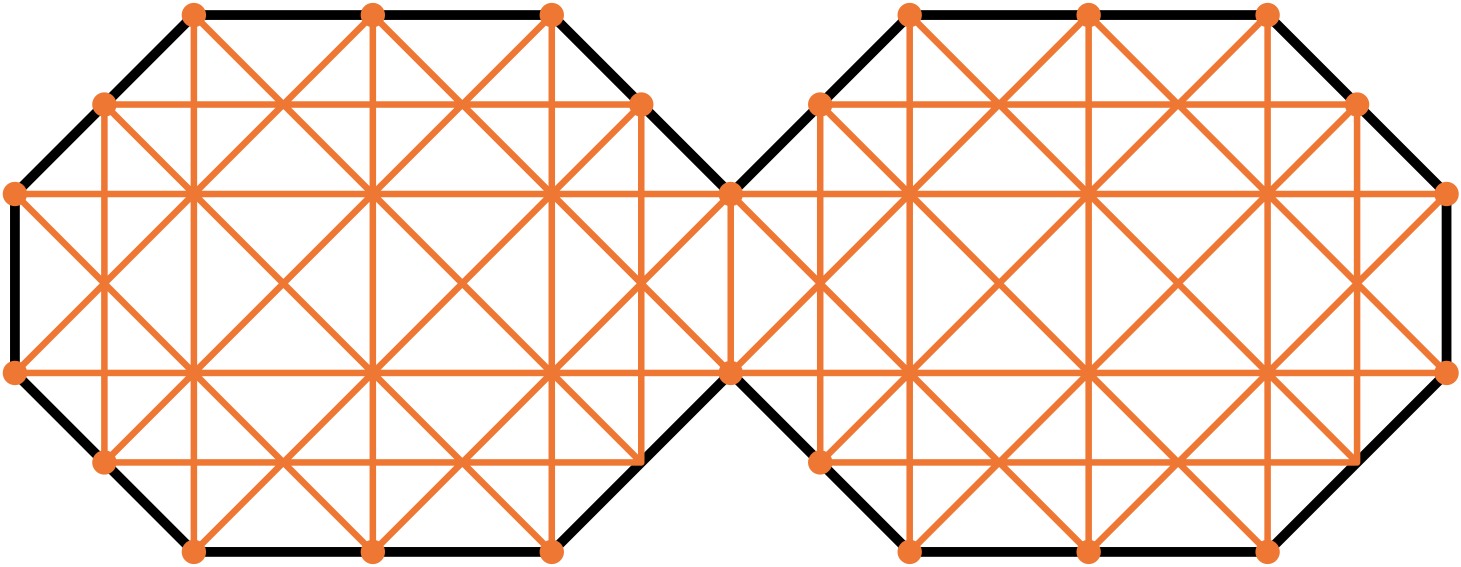}
  \caption{This polygon is in BP.}
    \label{fig:double_egg}
\end{figure}

While Theorem \ref{theorem:F_finite} allows us to easily create examples, it is still too limited. In fact, the polygons ``the Quad'' and ``the Penthouse'' from \cite{Albers_Banhatti_Sadlo_Schwartz_Tabachnikov_2025} have infinite filled sets of vertices, see Figure \ref{fig:C_grid_quad} for the Quad, but still every symplectic billiard orbit is periodic. However, for both examples the set of critical points $C$ is finite. See Definition \ref{def:critical_points} for the set $C$ of critical points and also the set $C^\#$. These examples also motivate the definition of the set $C$. Recall that in $C$ we only take directions of actual symplectic billiard trajectories into account, whereas in $F$ we simply go, starting from vertices, repeatedly in all possibly allowed directions regardless of whether they follow symplectic billiard trajectories or not. It turns out the requirement of $C$ being finite is still too strict, see Example \ref{example_unbounded_periodic}, but does guarantee a uniform period bound. 

\begin{theorem}[Periodicity criterion] \label{theorem:periodicity-criterion}
$ $
\begin{enumerate}  
\renewcommand{\theenumi}{\Roman{enumi}}\renewcommand{\labelenumi}{(\theenumi)}
\itemsep=1ex
\item If every limit point of the set of critical points $C$ is a vertex of $P_-\sqcup P_+$ then every symplectic billiard orbit on the phase space $\P$ is periodic.
\item If $C$ is closed then every orbit on $\P\setminus C^\#$ is periodic. Then, in particular, almost every orbit on $\P_{\max}$ is periodic. 
\item If $C$ is finite then every symplectic billiard orbit on the phase space $\P$ is periodic and the maximal period is bounded from above by $4|C\cap P_-||C\cap P_+|$. In the single table setting (counting only the critical points of $P = P_-=P_+$), we have the bound $2(|C|^2 - |C|)$. 
\end{enumerate}
\end{theorem}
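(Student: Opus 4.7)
I would prove the three statements in the order (II), (I), (III), each building on the previous: Part (II) establishes the basic tile-periodicity machinery on $\P_\mathrm{max}\setminus C^\#$; Part (I) extends it to all of $\P$ using the discreteness of $C$ away from vertices; Part (III) then quantifies the period when $C$ is finite.

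For Part (II), the hypothesis that $C$ is closed gives immediately that $C^\#$ is closed in $(P_-\times P_+)\sqcup(P_+\times P_-)$, being a finite union of products of the form $P_\pm\times(C\cap P_\mp)$ and $(C\cap P_\pm)\times P_\mp$. Since $N\subset C^\#$, we have $\P_\mathrm{max}\setminus C^\#\subset\P$, and this set is open in $\P_\mathrm{max}$. Any arc-connected component of $\P_\mathrm{max}\setminus C^\#$ is therefore open and has positive area, so it lies in a single tile of $\P$ by Lemma~\ref{lemma:connected_is_contained_in_tile}; by Lemma~\ref{lemma:positive_tiles_are_periodic} this tile is periodic. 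The ``almost every orbit'' assertion follows from Lemma~\ref{lemma:F_C_countable_N_nullset}, which guarantees $C^\#$ is a null set.

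For Part (I), note that $V\subset C$ (vertices lie in $\pi_2(N_0)\subset C$), so the hypothesis forces $\overline C=C$, i.e.~$C$ is closed, and Part (II) handles orbits in $\P\setminus C^\#$. For $(x_0,x_1)\in\P\cap C^\#$, the inclusion $\P\subset\P'$ forces $x_0,x_1\notin V$, so if $x_0\in C$ (resp.~$x_1\in C$) then $x_0$ (resp.~$x_1$) is isolated in $C$ as it cannot be a limit point. I would choose an open phase-space box $U\times V\ni(x_0,x_1)$ with $U,V$ contained in the respective edges of $x_0,x_1$ such that $U\cap C\subset\{x_0\}$ and $V\cap C\subset\{x_1\}$; then $C^\#\cap(U\times V)$ reduces to at most the cross $(\{x_0\}\times V)\cup(U\times\{x_1\})$. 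A continuous diagonal path $\gamma(t)=(x_0(t),x_1(t))$ with both coordinates displaced along their edges away from $x_0,x_1$ satisfies $\gamma(0)=(x_0,x_1)$ and, for $t>0$, $\gamma(t)\in(U\times V)\setminus C^\#\subset\P$ (non-parallel edges are preserved since we stay on the original edges). This path exhibits $(x_0,x_1)$ as arc-connected within $\P$ to an open set that, by Part (II), lies in a periodic tile; hence by Corollary~\ref{cor:connected_components_are_tiles} the tile containing $(x_0,x_1)$ has positive area, and its orbit is periodic by Lemma~\ref{lemma:positive_tiles_are_periodic}.

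Part (III) follows from Part (I) (finite $C$ is trivially closed), and the period bound is obtained by adapting the proof of Theorem~\ref{theorem:F_finite} with $C$ in place of $F$: one partitions each $P_\pm$ into $|C\cap P_\pm|$ arcs between consecutive critical points, and argues that the even and odd parts of any trajectory visit each $C$-arc at most twice, giving bounds $2|C\cap P_\pm|$ and $2|C\cap P_\mp|$ respectively; combined with the $\phi$-invariance of $\P_f,\P_b$ from Lemma~\ref{lemma:billard_map_maps_phase_space_into_itself}, one obtains the bound $4|C\cap P_-||C\cap P_+|$. The delicate point is that $C$ does not, a priori, enjoy the built-in closure of $F$ under the intercept-theorem construction producing guide-rail endpoints $a,b$, so some extra care is needed to verify $a,b\in C$ when starting from consecutive points of $C$; here the fact that we are tracking an actual orbit inside $\P$ and that $C$ is finite should reinstate the required property. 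In the single-table case, identifying the two copies of $P$ and excluding diagonal pairs $(x,x)$ yields the sharper bound $2(|C|^2-|C|)$. The principal obstacle is the arc-connected path argument in Part (I): without the hypothesis that every limit point of $C$ is a vertex, $C^\#$ could accumulate arbitrarily close to $(x_0,x_1)$ and block the diagonal escape into a periodic tile, so this hypothesis is used precisely to guarantee the local ``cross'' structure of $C^\#$.
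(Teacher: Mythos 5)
Your Parts (I) and (II) are correct and are essentially the paper's argument in a slightly different packaging: the paper proves a single Claim (if $C$ is closed and neither $x_0$ nor $x_1$ is a limit point of $C$, then the open rectangle $\underline{x}_0x_0\times\underline{x}_1x_1$ built from the nearest critical points to the left lies in $\P$, so the tile of $(x_0,x_1)$ has positive area and Lemma \ref{lemma:positive_tiles_are_periodic} applies), whereas you get (II) from openness of $\P'\setminus C^\#$ and (I) from the local ``cross'' structure of $C^\#$ near $(x_0,x_1)$ together with a diagonal escape path. Both routes exhibit an open phase rectangle inside the tile of $(x_0,x_1)$ via Lemma \ref{lemma:connected_is_contained_in_tile} and Corollary \ref{cor:connected_components_are_tiles}, so this part is fine.

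The genuine gap is the period bound in Part (III). You propose to rerun the guide-rail argument of Theorem \ref{theorem:F_finite} with $C$ in place of $F$, and you correctly flag the problem yourself: that argument needs the points $a,b$, obtained by translating two consecutive partition points $f_j^\pm,f_{j+1}^\pm$ along $T_{x_{i+1}}P_\mp$ through the interior, to lie again in the partitioning set. For $F$ this holds by the very definition of the recursion $V_i^\pm\mapsto V_{i+1}^\pm$; for $C$ it fails in general, because $C$ is generated by pulling vertices back along \emph{actual} symplectic billiard trajectories, and the segment $f_j^\pm a$ need not be a segment of any trajectory that eventually reaches a vertex (the forward continuation from $(w,f_j^\pm)$ depends on the edge carrying $w$, which need not match the witness for $f_j^\pm\in C$). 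The Quad of Example \ref{example:quad} has $C$ finite but $F$ infinite, so $C$ is strictly smaller than any ``guide-rail-closed'' set and the transplanted argument does not close up; your appeal to ``tracking an actual orbit'' does not repair this, since the guide rails are attached to the partition points, not to the orbit. The paper instead gets the bound by a counting argument that avoids guide rails entirely: for finite $C$ the complement of the $C$-grid $C^\#$ in $(P_-\times P_+)\sqcup(P_+\times P_-)$ has at most $2|C\cap P_-|\,|C\cap P_+|$ arc-wise connected components, each open and of positive area; tiles of $\P$ are at least as large as these components, the return map of a positive-area tile has order at most $4$ by Lemma \ref{lemma:positive_tiles_are_periodic}, and restricting to $\P_f$ resp.\ $\P_b$ (Lemma \ref{lemma:billard_map_maps_phase_space_into_itself}) halves the count, yielding $4|C\cap P_-|\,|C\cap P_+|$, with $2(|C|^2-|C|)$ in the single-table case after removing the diagonal. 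You should replace your guide-rail sketch by this component count.
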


\begin{proof}
We begin by noting that if $C$ is closed then for any point $z\in P_\pm$ that is not a limit point of $C$, we have a unique closest point $\underline z \in C$ to the left of $z$, according to the map $S^1 \to P_\pm$. The point $\underline z$ has positive distance to $z$. We point out that there is no vertex strictly in between $\underline z$ and $z$ since vertices are contained in $C$.

\medskip

\noindent \underline{Claim}: Let $(x_0,x_1) \in \P$ where neither $x_0$ nor $x_1$ is a limit point of $C$. If $C$ is closed then the symplectic billiard orbit of $(x_0,x_1)$ is periodic.

\smallskip

We will show that the tile containing $(x_0,x_1)$ has positive phase space area and then apply Lemma \ref{lemma:positive_tiles_are_periodic}.
Since $x_0$ and $x_1$ are not limit points of $C$  and $C$ is closed we have the corresponding closest points to the left, $\underline x_0, \underline x_1 \in C$, as defined above. Moreover, for every $(y_0,y_1) \in \underline x_0 x_0 \times \underline x_1 x_1$ the point $y_0$ lies in the same edge of $P_\pm$ as $x_0$ and, likewise, $y_1$ lies in the same edge of $P_\mp$ as $x_1$. We conclude  $\det(\nu_{y_0},\nu_{y_1}) = \det(\nu_{x_0},\nu_{x_1}) \neq 0$ and hence $(y_0,y_1) \in \P'$. Moreover, since $\underline x_0$ and $\underline x_1$ are the respective closest points in $C$, we have $(y_0,y_1) \notin C^\#$ and, in particular, $(y_0,y_1) \notin N\subset C^\#$, see Definition \ref{def:critical_points} for the set $C^\#$. Therefore, $(y_0,y_1)$ lies in the phase space $\P = \P'\setminus N$ and we conclude that $\underline x_0 x_0 \times \underline x_1 x_1 \subset \P$, i.e.~is a phase rectangle. By Lemma \ref{lemma:connected_is_contained_in_tile} each phase rectangle is contained in a tile. Since $\underline x_0$ resp.~$\underline x_1$ have positive distance to $x_0$ resp.~$x_1$ the corresponding tile has non-zero phase space area. Lemma \ref{lemma:positive_tiles_are_periodic} then implies that $(x_0,x_1)$ has a periodic symplectic billiard orbit, proving the Claim.

\medskip

We are now in the position to prove Theorem \ref{theorem:periodicity-criterion}. Assertion (I) follows from the Claim  since for any $(x_0,x_1) \in \P$ the points $x_0$ and $x_1$ are not vertices and thus, by assumption, not limit points of $C$.

Assertion (II) also follows from the Claim together with the fact that $C$ is closed if and only if it contains all of its limit points. Moreover, according to Lemma \ref{lemma:F_C_countable_N_null set} the set $C^\#$ is a null set.

For Assertion (III) we observe that the number of arc-wise connected components in $\big[(P_-\times P_+)\sqcup (P_+\times P_-)\big]\setminus C^\#$ is bounded by $2|C\cap P_-||C\cap P_+|<\infty$, see Figure \ref{fig:disjoint_C_grid_sub3} for an illustration. Using again that $C$ is finite, we conclude that every arc-wise connected component in $\big[(P_-\times P_+)\sqcup (P_+\times P_-)\big]\setminus C^\#$ is an open set. The phase space $\P$ has fewer (and potentially bigger) arc-wise connected components than $\big[(P_-\times P_+)\sqcup (P_+\times P_-)\big]\setminus C^\#$, see again Figure \ref{fig:disjoint_C_grid_sub3}. In particular, every arc-wise connected component of $\P$ has positive area. We recall from Corollary \ref{cor:connected_components_are_tiles} that arc-wise connected components of $\P$ are tiles.

By Lemma \ref{lemma:positive_tiles_are_periodic} the symplectic billiard map $\phi$ induces a return map on every tile of positive area. This return map has at most order 4. 
From this we obtain that every point in $\P$ is periodic with period bounded by $8|C\cap P_-||C\cap P_+|$. Taking into account that the phase space consists of the forward and backward phase space, see Definition \ref{definition:iterable_phase_space} and Lemma \ref{lemma:billard_map_maps_phase_space_into_itself}, improves this bound to $4|C\cap P_-||C\cap P_+|$. For the single table setting the analogous argument gives the bound $2|C|^2$ which can be improved to $2(|C|^2 - |C|)$ by observing that points on the diagonal are not in the phase space since the billiard map is not defined for points on parallel edges.
\end{proof}

\begin{remark}\label{rmk:after_Thm_periodicity}
In Example \ref{example:quad} we determined the set $C$ of critical points of the Quad which is, in particular, finite. Therefore, part (III) of Theorem \ref{theorem:periodicity-criterion} gives an alternative proof of the fact that the Quad is an element of $BP$, see \cite{Albers_Banhatti_Sadlo_Schwartz_Tabachnikov_2025}.    

Example \ref{example_unbounded_periodic}, a square and a rhombus, shows that $FP\setminus BP$ is non-empty. Note that in this example all limit points of $C$ are vertices. By part (I) of Theorem \ref{theorem:periodicity-criterion} this is an element of $FP$, i.e.~every orbit on the phase space is periodic. However, there is no uniform period bound, in fact, there are periodic orbits moving arbitrarily deep into the far corner of the rhombus, see Figure \ref{fig:C_infinite_explanation_sub1}. 

We also give an example of a single table element in $FP\setminus BP$, shown in Figure \ref{fig:star_single}. However, we do not give a full proof here. The regular pentagram has a filled set of vertices $F$ with limit points precisely in the outer vertices. Thus every limit point of $C$ is a vertex and Theorem \ref{theorem:periodicity-criterion} applies.
\end{remark}

In particular, we exhibited the following theorem.

\begin{theorem}\label{theorem:FP_but_not_BP}
There is a pair of polygons $P_-$ and $P_+$ for which every symplectic billiard orbit is periodic; however, their periods are not uniformly bounded. 
\end{theorem}

\begin{figure}[ht]
    \includegraphics[width=.4\linewidth]{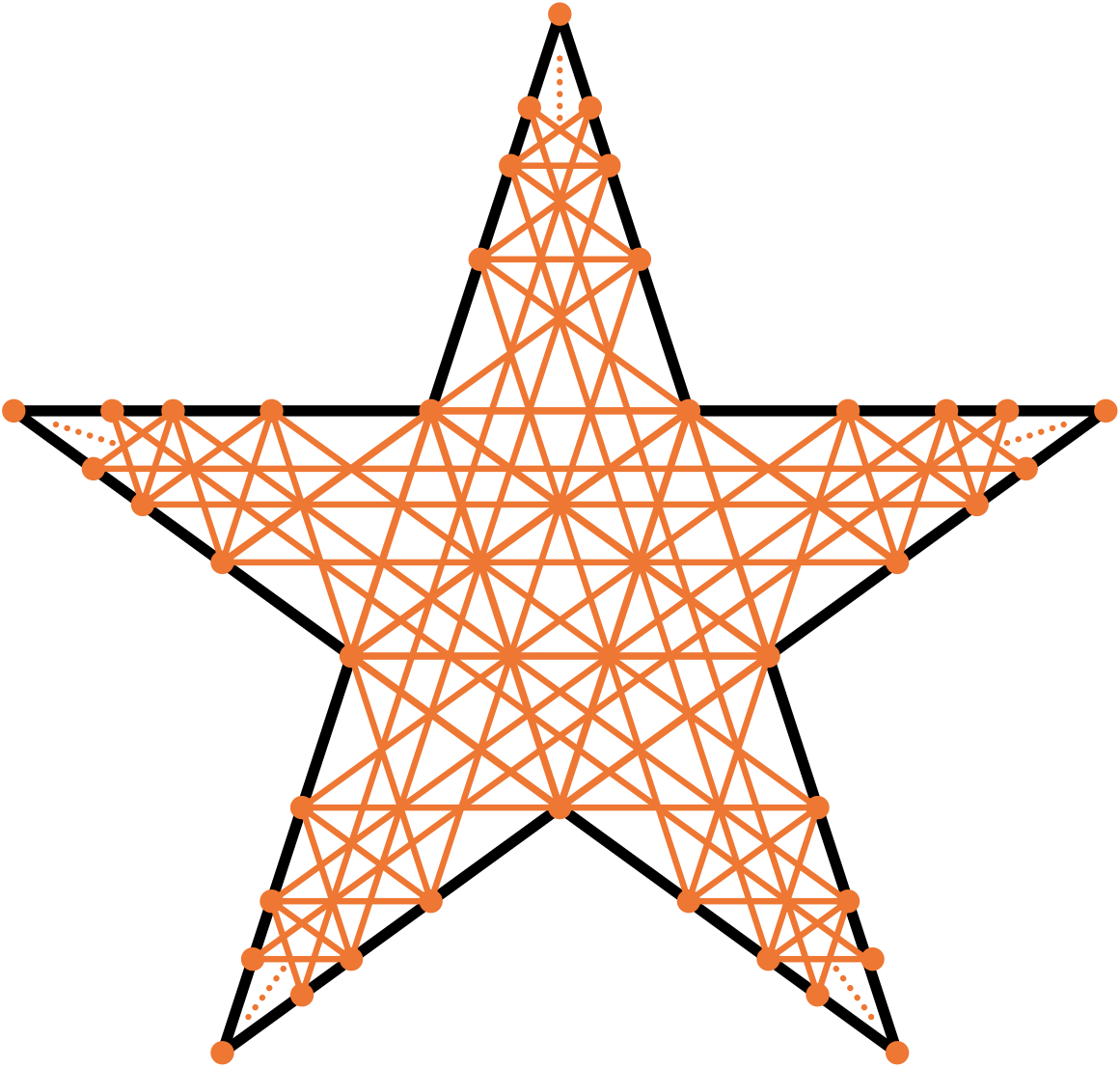} 
    \caption{The regular pentagram is in $FP\setminus BP$. The set $F$ has accumulation points precisely in the outer vertices of the pentagram.}
    \label{fig:star_single}
\end{figure}


\section{ne-quadrilaterals have isolated periodic orbits}\label{section:kite}

In this section we prove that every ne-quadrilateral (short for north-east quadrilateral) carries an isolated periodic symplectic billiard orbit of period 6.
By a ne-quadrilateral we mean a convex polygon with vertices $(0,1)$, $(0,0)$, $(1,0)$, $(X,Y)$ with $X > 1$, $Y > 1$ and $|X-Y| < 1$, see Figure \ref{fig:crooked_kite}. 

\begin{figure}[ht]
    \includegraphics[width=.8\linewidth]{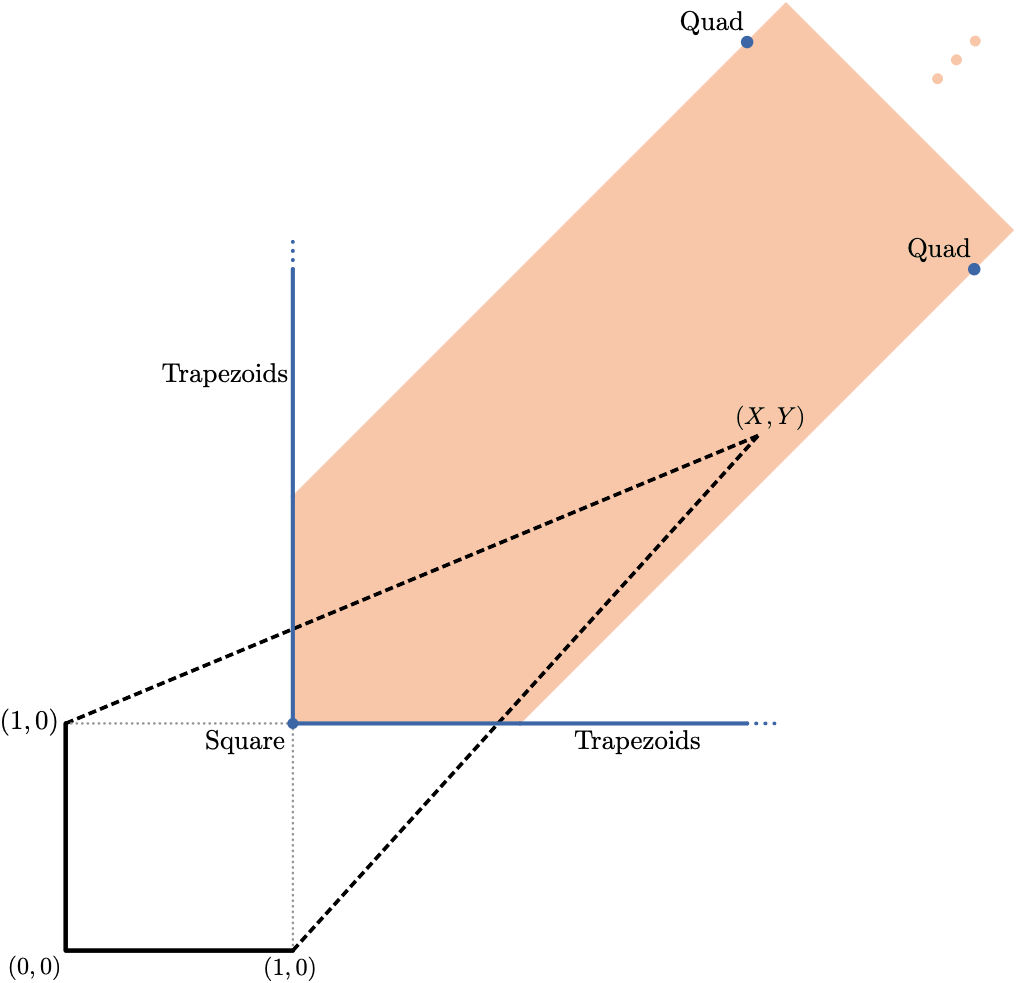} 
    \caption{The family of ne-quadrilaterals, i.e.~choose $(X,Y)$ in the orange region.}
    \label{fig:crooked_kite}
\end{figure}

These examples show that the set $IP$ is non-empty.
On the boundary of the family of ne-quadrilaterals, we have a square ($X = Y = 1$), and trapezoids ($X = 1$ or $Y = 1$). Both of these are in $BP$. Another point from $BP$ in the boundary is (an affine transformation of) the Quad, namely the polygon with vertices $(0,1)$, $(0,0)$, $(1,0)$, $(X=3,Y=4)$. However, computer experiments suggest that many quadrilaterals in the boundary are not in $FP$. Finally we point out that the space of quadrilaterals up to affine transformations is 2-dimensional and the space of ne-quadrilaterals forms an open subset in there.

Let us first illustrate how to find the isolated trajectory geometrically, see Figure \ref{fig:kite_period_6}.  The process is to cut off the red and blue triangles from the polygon. These triangles are then point-reflected in $\R^2$. After shrinking and translating they fit exactly into the polygon again and indeed form the even and odd parts of a 6-periodic symplectic billiard trajectory. Figure \ref{fig:kite_period_fail} shows how this construction fails on the boundary of the space of ne-quadrilaterals. Here, one of the slopes is that of the diagonal. 

\begin{figure}[ht]
    \centering
    \includegraphics[width=.6\linewidth]{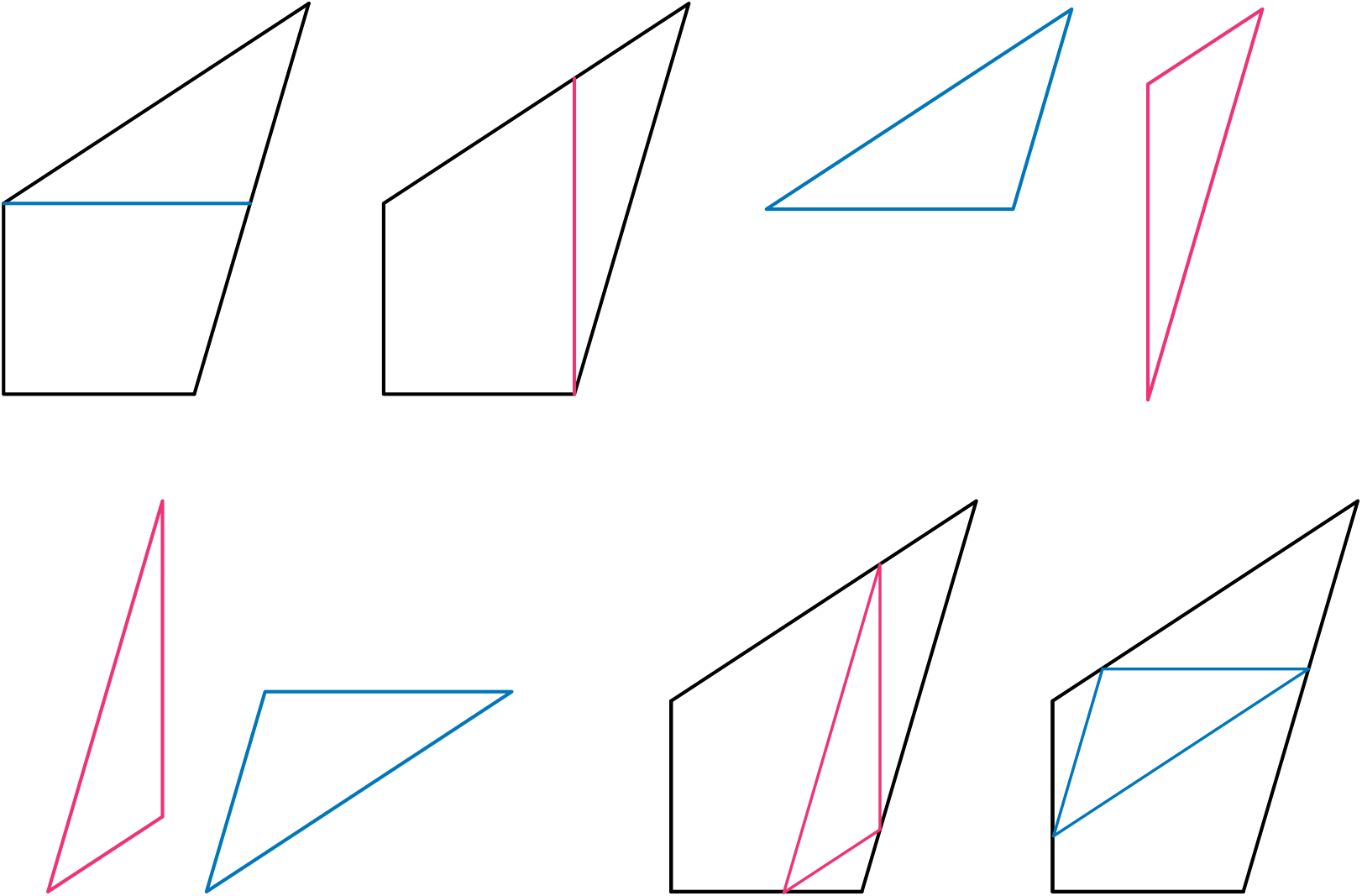}
    \caption{\small How to find a 6-periodic trajectory in any ne-quadrilateral. }
    \label{fig:kite_period_6}
\end{figure}

\begin{figure}[ht]
    \centering
    \includegraphics[width=.6\linewidth]{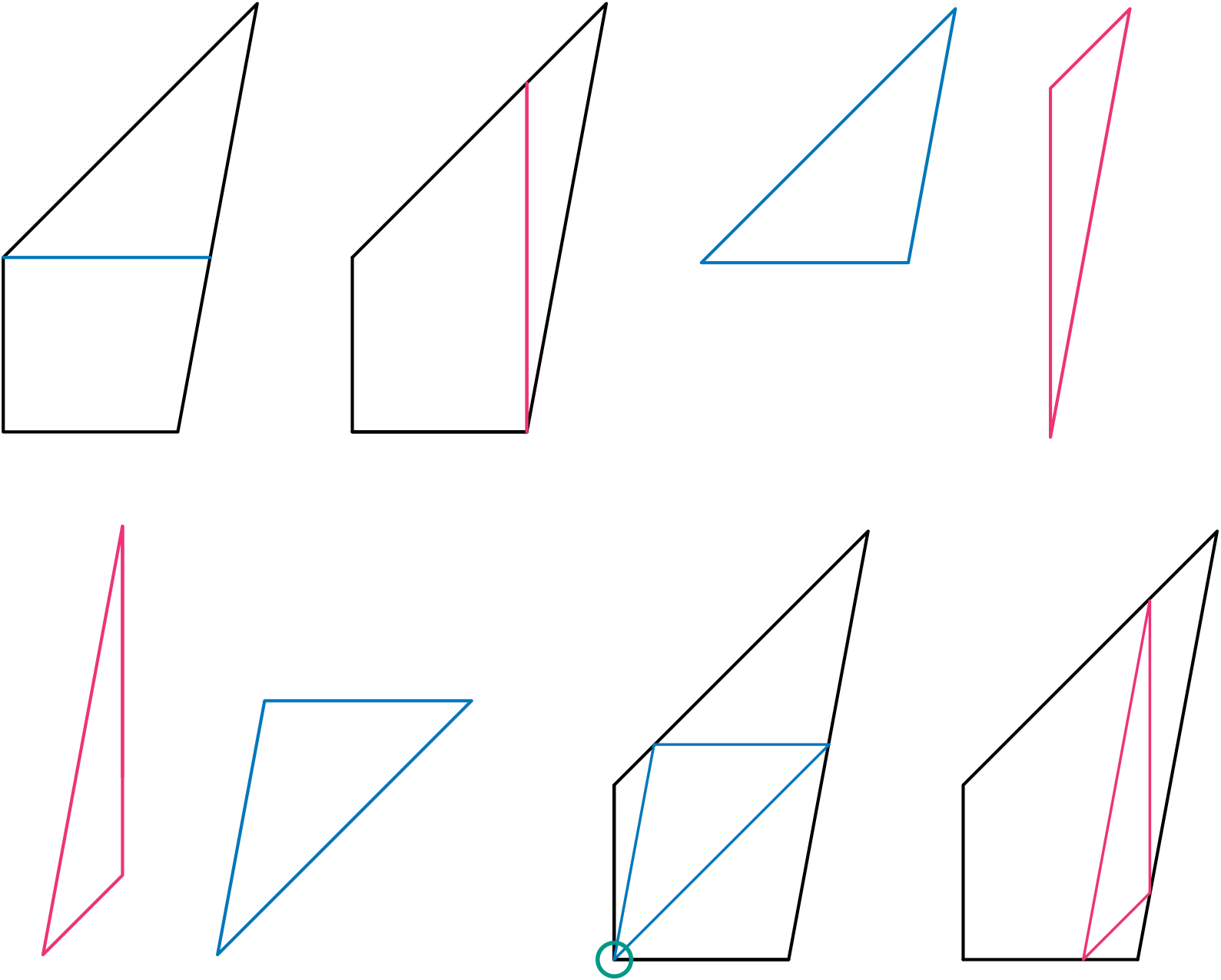}
    \caption{\small 
    This polygon has a diagonal slope and lies on the boundary of the family of ne-quadrilaterals. The 6-periodic orbits in nearby ne-quadrilaterals disappear in the displayed polygon as, in the limit, these run into the bottom-left vertex. }
    \label{fig:kite_period_fail}
\end{figure}

Let us now describe the periodic orbit in a ne-quadrilateral more analytically. The two relevant slopes of the sides of the ne-quadrilateral are $m_1=\frac{Y-1}{X}$ and $m_2=\frac{Y}{X-1}$. Now consider two families of rays starting on the horizontal side of the ne-quadrilateral with these two slopes, i.e.~$(s,0)+\R(X,Y-1)$ and $(s,0)+\R(X-1,Y)$ for $0\leq s<1$. In particular, each ray is parallel to one of the sides of the ne-quadrilateral. Since $X,Y > 1$, both rays intersect the ne-quadrilateral in two points, one being $(s,0)$ and the other is on a slanted side, see Figure \ref{fig:period_6_sub1}. We denote this other intersection point by $a(s)$ resp.~$b(s)$. 

\begin{figure}[ht]
        \includegraphics[width=0.65\linewidth]{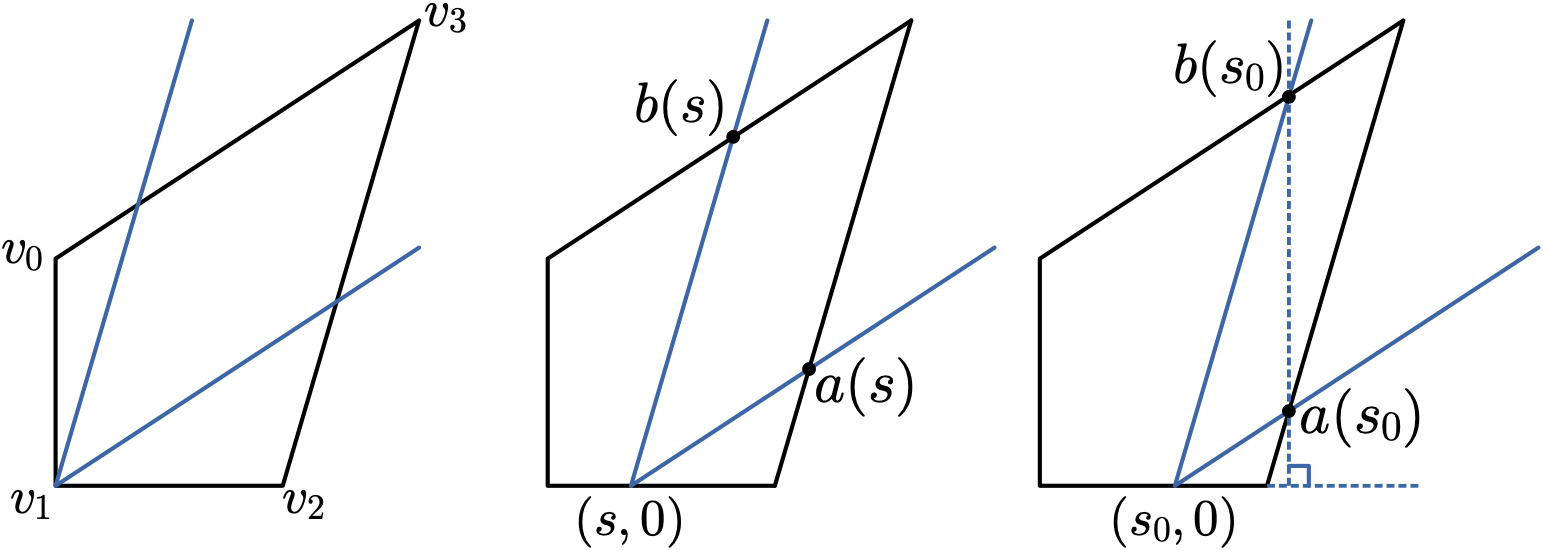}
        \caption{Defining the functions $a(s)$, $b(s)$ and determining the value $s_0$.}
        \label{fig:period_6_sub1}
\end{figure}

We claim that, due to the assumptions $X > 1$, $Y > 1$ and $|X-Y| < 1$, there is a unique $s_0\in(0,1)$ such that the $x$-coordinates of $a(s_0)$ and $b(s_0)$ agree, see again Figure \ref{fig:period_6_sub1}. Indeed, as a function of $s\in[0,1)$, the $x$-coordinate of $a(s)$ is strictly decreasing and that of $b(s)$ strictly increasing. The value of the $x$-coordinate of $a(0)$ equals $\frac{YX}{X+Y-1}$ and is strictly larger than that of $b(0)=\frac{(X-1)X}{X+Y-1}$. In the limit $s\to1$ this relation turns around; more precisely, the value of the $x$-coordinate of $a(1)$ equals $1$ which is strictly smaller than that of $b(1)=X$. The intermediate value theorem gives the desired $s_0$. 

We conclude that the triangle with vertices $(s_0,0)$, $a(s_0)$, $b(s_0)$ is inscribed in the ne-quadrilateral, has a vertical edge and two edges which are parallel to the slanted sides of the ne-quadrilateral. Analogously, considering two families of rays starting on the vertical side of the ne-quadrilateral instead we also find an inscribed triangle $(0,t_0)$, $d(t_0)$, $c(t_0)$ with one horizontal edge and two edges parallel to the slanted sides of the ne-quadrilateral. One of the coordinates of the latter triangle is of the form $(0,t_0)$. It can now easily be checked that the symplectic billiard orbit in the ne-quadrilateral with initial conditions $(x_0,x_1) = \big((0,t_0), b(s_0)\big)$ is 6-periodic, see Figure \ref{fig:period_6_sub2}.

\begin{figure}[ht]
        \includegraphics[width=0.6\linewidth]{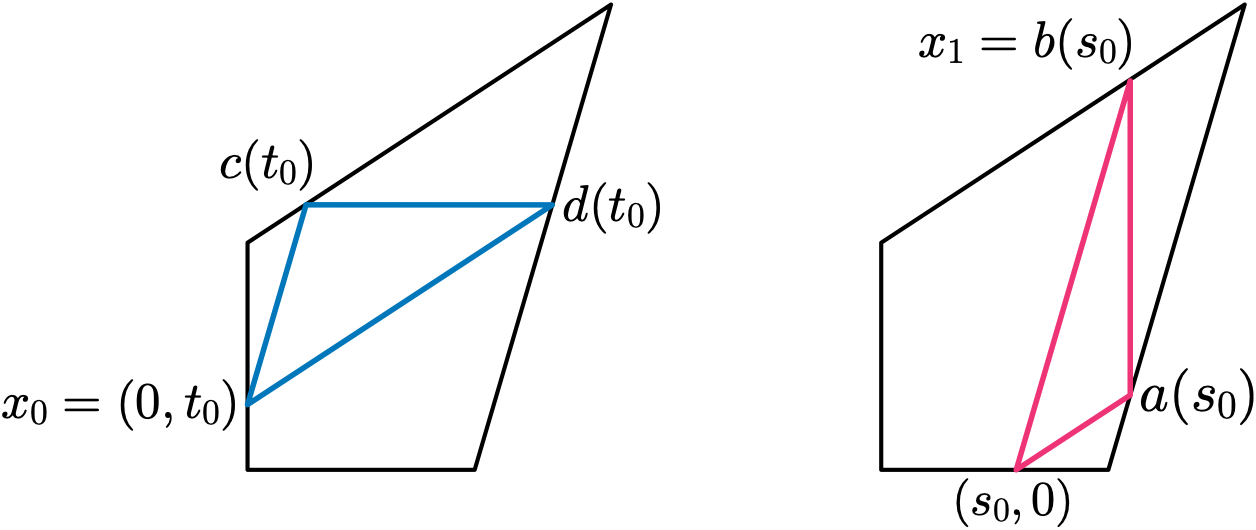}
        \caption{The symplectic billiard orbit in the ne-quadrilateral with initial conditions $(x_0,x_1) = \big((0,t_0), b(s_0)\big)$ is 6-periodic (and isolated).}
        \label{fig:period_6_sub2}
\end{figure}

It remains to show that this 6-periodic orbit is isolated. For that we consider symplectic billiard orbits with initial conditions $(x_0,x_1)$ satisfying $x_0=(0,t_0)$ and $|x_1-b(s_0)|<\delta$. For sufficiently small $\delta>0$ all these symplectic billiard orbits have (in forward time) the same even trajectory, see Figure \ref{fig:kite_contraction}. 

\begin{figure}[ht]
    \centering
    \includegraphics[width=0.7\linewidth]{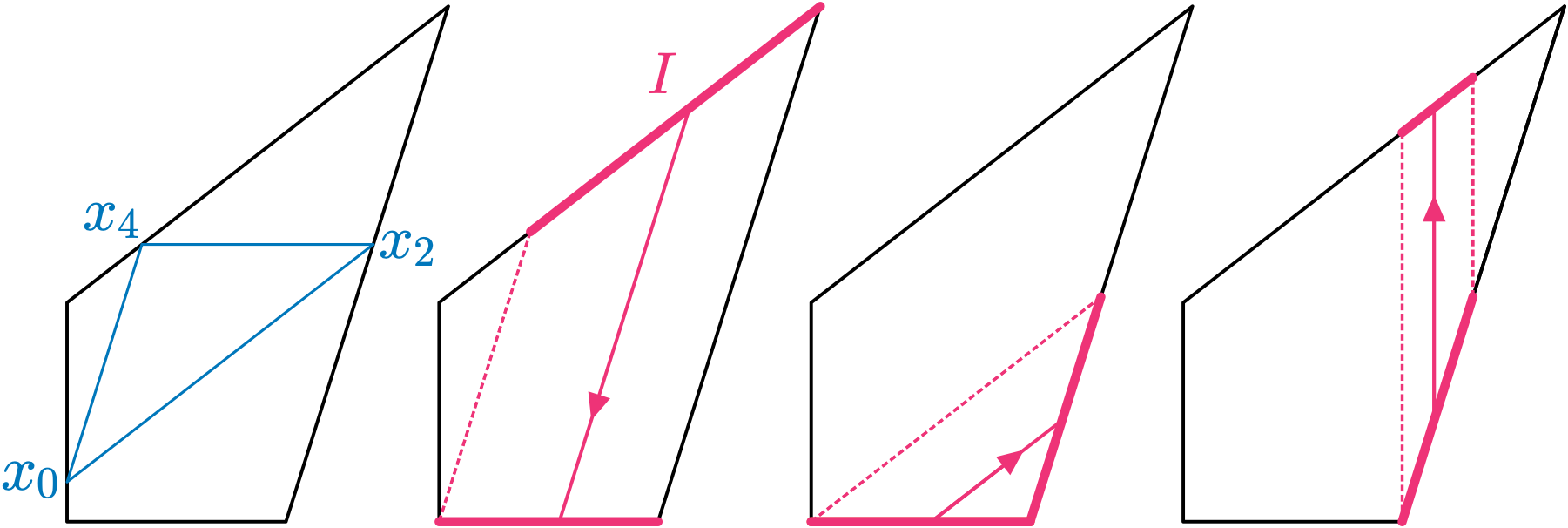}
    \caption{The return map of the set $\{x_0\}\times I$ is a contraction.}
    \label{fig:kite_contraction}
\end{figure}

In particular, we obtain a return map on the interval $(b(s_0)-\delta,b(s_0)+\delta)$. Due to the slopes of the sides of the ne-quadrilateral this return map is a contraction, see again Figure \ref{fig:kite_contraction}. In particular, this return map has a unique fixed point which necessarily is $b(s_0)$. Therefore, in any neighborhood of $\big((0,t_0), b(s_0)\big)$ in phase space there are non-periodic orbits and Lemma \ref{lemma:positive_tiles_are_periodic}  implies that the tile of $\big((0,t_0), b(s_0)\big)$ has zero area. Thus, by definition, this 6-periodic orbit is isolated. This proves that every ne-quadrilateral carries an isolated periodic symplectic billiard orbit of period 6.

\section{The necktie has no periodic orbits}\label{section:necktie}

In this section we present a pair of polygons, the necktie, see Figure \ref{fig:necktie}, for which all symplectic billiard orbits are non-periodic. For this, the first return map of a particular subset of the corresponding phase space plays a crucial role. It turns out that this return map is related to the so-called dyadic odometer resp.~the von Neumann-Kakutani transformation or the 2-adic adding machine. Before discussing symplectic billiards on the necktie we recall different representations of these maps. One is the one occurring as part of the first return map, the other clearly shows that this particular map has no periodic points.

\begin{figure}[ht]
    \centering
    \includegraphics[width=0.09\linewidth]{graphics/necktie.jpeg}
    \caption{The necktie -- a pair of polygons for which the symplectic billiard map has no periodic orbits at all.}
    \label{fig:necktie}
\end{figure}

Consider the set $\{0,1\}^\N$ of sequences of $0$ and $1$ and the map
\begin{equation*}
    \begin{split}
        S:\{0,1\}^\N &\to \{0,1\}^\N\\
        (1,\ldots,1,0,a_k,a_{k+1},\ldots) &\mapsto (0,\ldots,0,1,a_k,a_{k+1},\ldots).
    \end{split}
\end{equation*}
That is, whenever a sequence starts with a number of 1's followed by a 0 it is replaced by an equal number of 0's followed by a 1. The remaining part of the sequence is unchanged. This map is called the dyadic odometer. Next we give two other representations of this map. In the last one it is easy to see that $S$ has no periodic orbits.

The first representation of $S$ is as an interval exchange transformation (von Neumann-Kakutani transformation) via the binary representation 
\begin{equation}\nonumber
\begin{aligned}
\varphi: \{0,1\}^\N &\to [0,1]\\ 
(a_1,a_2,\ldots) &\mapsto \sum_{i=1}^\infty a_i2^{-i}.
\end{aligned}
\end{equation}
Of course, $\varphi^{-1}$ is defined only outside the dyadic rationals, i.e.~exactly those where the binary representation is not unique. The map $\varphi\circ S\circ\varphi^{-1}$ is the interval exchange transformation depicted in Figure \ref{fig:dyadic_odometer}.

\begin{figure}[ht]
    \centering
    \includegraphics[width=.5\linewidth]{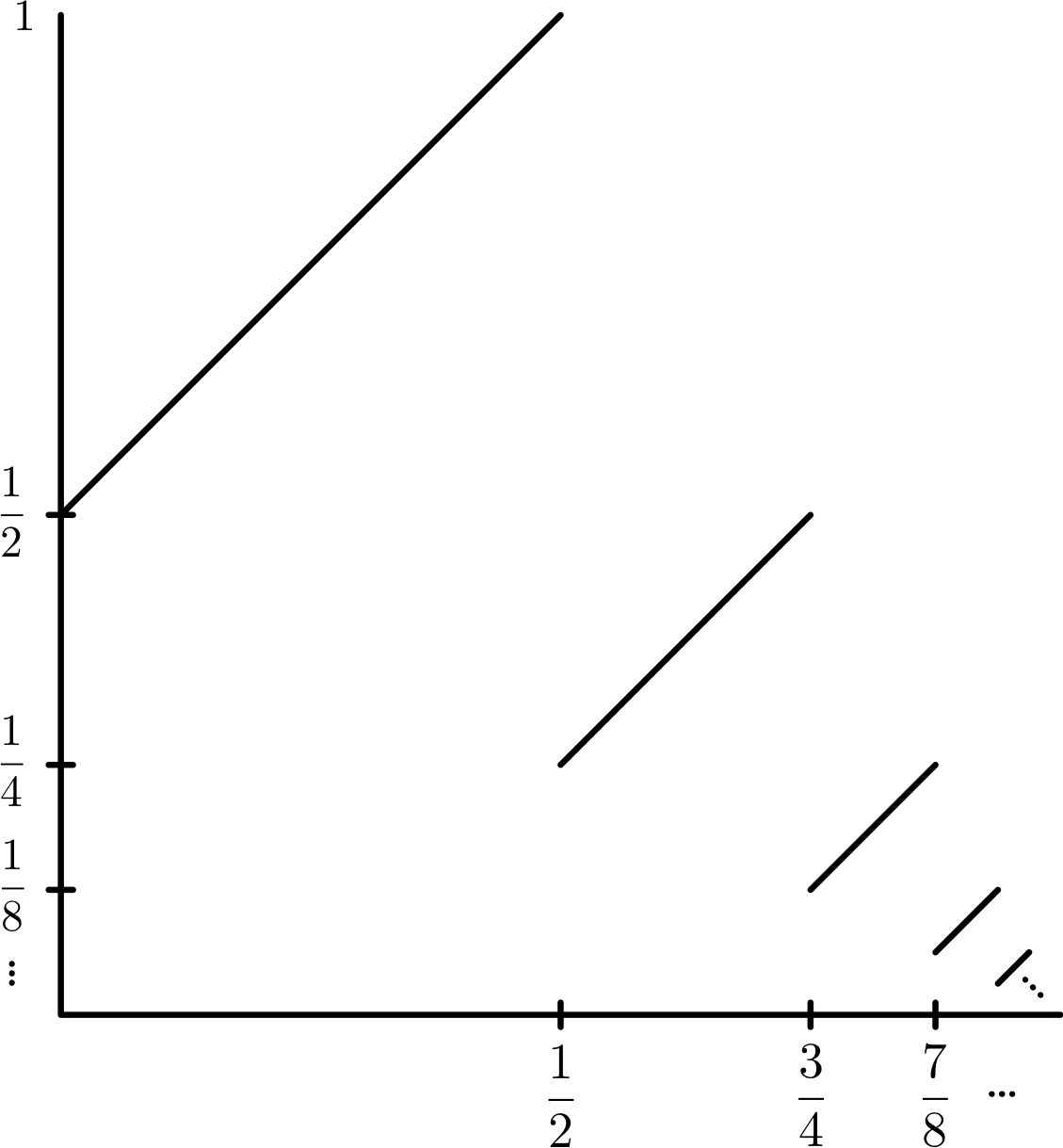}
    \caption{The von Neumann-Kakutani transformation.}
    \label{fig:dyadic_odometer}
\end{figure}

If we instead identify $\{0,1\}^\N$ with the 2-adic integers $\Z_2$, that is, if we consider the bijection
\begin{equation}\nonumber
\begin{aligned}
\psi: \{0,1\}^\N &\to \Z_2\\ 
(a_1,a_2,\ldots) &\mapsto \sum_{i=1}^\infty a_i 2^i,
\end{aligned}
\end{equation}
then the map $S$ becomes
\begin{equation}\nonumber
\begin{aligned}
\psi \circ  S \circ \psi^{-1}: \Z_2 &\to \Z_2\\
x &\mapsto x+1,
\end{aligned}
\end{equation}
the 2-adic adding machine. In the 2-adic integers the equation $x+n=x$, $n\in\N_0$, has only $n=0$ as a solution and thus the map $S$ does not have periodic points.

Now we describe a concrete realization of the necktie as the square and the kite with vertices $v_0 = (2,2)$, $v_1 = (2,0)$, $v_2 = (0,0)$ and $ v_3 = (0,2)$ resp.~$w_0 = (5,2)$, $w_1 = (4,2)$, $w_2 = (3,0)$ and $w_3 =(5,1)$, see Figure \ref{fig:square_kite}. 

\begin{figure}[ht]
    \centering
    \includegraphics[width=.55\linewidth]{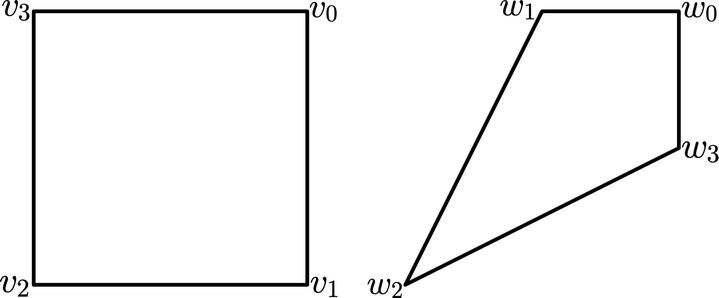}
    \caption{For convenience we rearrange the necktie from Figure \ref{fig:necktie}.}
    \label{fig:square_kite}
\end{figure}

To show that the symplectic billiard map has no periodic orbits we make the following observations:

\begin{enumerate}  
\itemsep=1.5ex
\item For every point in the phase space $\P$ the corresponding symplectic billiard trajectory eventually hits some point in $w_3w_0\cup w_0w_1$.
\item For $x\in w_3w_0$ the symplectic billiard map defines a map of  the 1-dimensional domain $\P \cap (\{x\}\times v_1v_2)$ resp.~$\P \cap (\{x\}\times v_3v_0)$. Similarly, for $x\in w_0w_1$ we obtain a map of $\P \cap (\{x\}\times v_0v_1)$ resp.~$\P \cap (\{x\}\times v_2v_3)$. After identifying the domain $\{x\}\times v_iv_{i+1}\cong(0,1)$ we obtain a measurable transformation of the segment $(0,1)$ and points in  $\{x\}\times v_iv_{i+1}$ that are \textit{not} in $\P$ correspond to the dyadic rationals in $(0,1)$. In this identification each map coincides with the von Neumann-Kakutani transformation. 
\end{enumerate}

Combining these two observations with the fact that $S$ has no periodic points immediately implies that the symplectic billiard map of the necktie cannot have periodic orbits. In the remainder of this section we prove the two observations.

\begin{proof}[Proof of Observation (i)]
Let  $(x_k)_{k\in\Z}$ be a symplectic billiard trajectory. Without loss of generality we may assume that the even trajectory $(x_{2k})_{k\in\Z}$ lies in the kite and the odd trajectory $(x_{2k+1})_{k\in\Z}$ in the square. We first observe that the even trajectory needs to follow a ``staircase'' pattern, see the blue part in Figure \ref{fig:observation_sub1}. 

\begin{figure}[ht]
        \centering
        \includegraphics[width=0.90\linewidth]{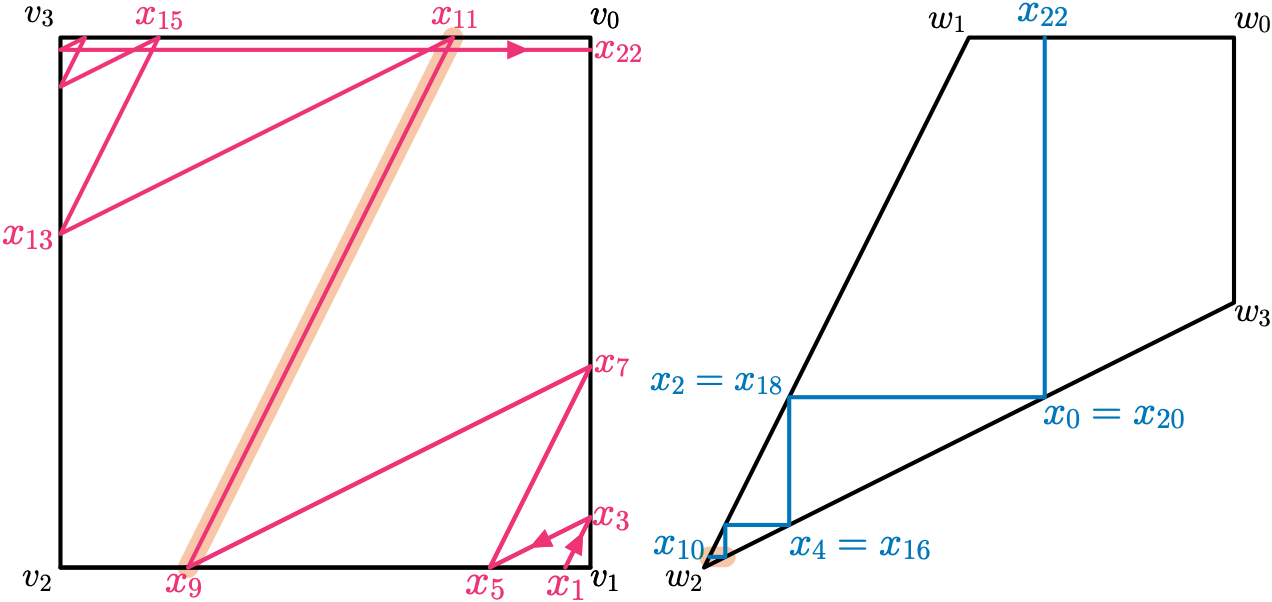}%
        \caption{A symplectic billiard trajectory starting at $(x_0,x_1)$ whose even trajectory eventually hits $w_0w_1$.\\[1ex]}
        \label{fig:observation_sub1}
\end{figure}
 
%
%
 
Indeed, $x_{2k+2}$ is obtained from $x_{2k}$ by moving vertically up/down or horizontally left/right, that is, parallel to the sides of the square. Assume now that the even trajectory descends along a staircase towards $w_2$ in the kite. We argue next that it has to ``turn around'' after a finite number of iterations, that is, it stops moving towards $w_2$ and starts moving away from $w_2$. This occurs at $x_{10}$ in Figure \ref{fig:observation_sub1}. Since the even trajectory in the kite moves towards $w_2$ the odd trajectory in the square moves up from the corner $v_1$ or down from $v_3$. Therefore, the odd trajectory necessarily will hit the vertical side or horizontal side of the square twice in a row after a finite number of iterations, see highlighted parts in Figure \ref{fig:observation_sub1}. Precisely then the even trajectory turns around, i.e.~retraces the previous steps and moves up the staircase again switching between horizontal and vertical direction while the odd trajectory continues to move up from $v_1$ towards $v_3$ or down from $v_3$ towards $v_1$. Therefore, the lines connecting consecutive elements of the odd trajectory become shorter and shorter. In particular, the odd trajectory needs to alternate between vertical and horizontal sides of the square. Thus, the even trajectory continues moving up on a staircase until it hits some point $x \in w_3w_0\cup w_0w_1$, as claimed. 
\end{proof}

\begin{remark}
We point out that when the even trajectory hits $w_3w_0\cup w_0w_1$ then the odd trajectory will next move either horizontally or vertically, that is, move from the vertical side to the vertical side of the square or from the horizontal side to the horizontal side. As a consequence the even trajectory will again ``turn around'' on the staircase. Also, while every even trajectory will at some point descend along a staircase towards $w_2$ in the kite and then necessarily turn around, the number of iterations until this turn around occurs very much depends on the value of the corresponding odd trajectory in the square. This leads to the highly non-trivial map on $\P \cap (\{x\}\times v_1v_2)$ and similar sets. 
\end{remark}

\begin{proof}[Proof of Observation (ii)]
By symmetry considerations it is enough to consider a point $x\in w_0w_1$ and describe the symplectic billiard trajectories starting in $\{x\}\times v_0v_1$. We identify $\{x\}\times v_0v_1$ with the interval $(0,1)$ via 
\begin{equation}\nonumber
\{x\}\times v_0v_1=\{(x,(1-t)v_0 + tv_1)\mid t\in(0,1)\}\cong(0,1).
\end{equation}
With this identification we claim that for every $\ell\in\N$ the subinterval $(\frac{2^{\ell-1}-1}{2^{\ell-1}},\frac{2^\ell-1}{2^\ell})$ of $\{x\}\times v_0v_1$ returns (after several applications of the symplectic billiard map) to $\{x\}\times v_0v_1$ and is mapped to the subinterval $(1-\frac{2^\ell-1}{2^\ell}, 1-\frac{2^{\ell-1}-1}{2^{\ell-1}})$. Moreover, this map is simply the translation
\begin{equation}\nonumber
\begin{aligned}
\left(\frac{2^{\ell-1}-1}{2^{\ell-1}},\frac{2^\ell-1}{2^\ell}\right) &\longrightarrow \left(1-\frac{2^\ell-1}{2^\ell}, 1-\frac{2^{\ell-1}-1}{2^{\ell-1}}\right)\\[1ex]
s&\mapsto \quad s+1-\frac{2^{\ell-1}-1}{2^{\ell-1}}-\frac{2^\ell-1}{2^\ell}.
\end{aligned}
\end{equation}
More precisely, if the initial value of a symplectic orbit is $(x_0,x_1)\in \{x\}\times v_0v_1\cong(0,1)$ with $(x_0,x_1)\in(\frac{2^{\ell-1}-1}{2^{\ell-1}},\frac{2^\ell-1}{2^\ell})$ then 
\begin{equation}\nonumber
x_{4\ell}=x_0 \quad\text{and}\quad x_{4\ell+1}=x_1+1-\frac{2^{\ell-1}-1}{2^{\ell-1}}-\frac{2^\ell-1}{2^\ell}.
\end{equation}
Moreover, $x_{2k}\neq x_0$ for all $k=1,\ldots,2\ell-1$. Thus, the self-map of $\{x\}\times v_0v_1$ is, after the identification $\{x\}\times v_0v_1\cong(0,1)$, precisely the above interval exchange map, the von Neumann-Kakutani transformation. In particular, the dyadic rationals are, after some iterations of the symplectic billiard map, mapped to points of the form $\frac{2^{k-1}-1}{2^{k-1}}$, $k\in\N$. These, in turn, then are mapped eventually into vertices, see our description below. Hence the dyadic rationals are not part of the phase space.  In fact, this identifies the dyadic rationals with points in the discontinuity set $N$.

To understand the above claim let us describe in more detail the odd part $(x_{2k+1})_{k\in\Z}$ of the symplectic billiard trajectory with initial values $(x_0,x_1)\in(\frac{2^{\ell-1}-1}{2^{\ell-1}},\frac{2^\ell-1}{2^\ell})\subset (0,1)\cong \{x\}\times v_0v_1$. The cases $\ell=1,2,3,4$ are shown in Figure \ref{fig:square_kite_map_description_without_2_adics}, where we write $x_1\in(0,\tfrac12)$ instead of $(x_0,x_1)\in(0,\tfrac12)$ etc.~for convenience.
 We point out that for $\ell\geq2$ the shift amount $1-\frac{2^{\ell-1}-1}{2^{\ell-1}}-\frac{2^\ell-1}{2^\ell}$ is negative while for $\ell=1$ it is $+\frac12$.

\begin{figure}[ht]

    \begin{subfigure}{\linewidth}
        \centering
        \includegraphics[width=0.65\linewidth]{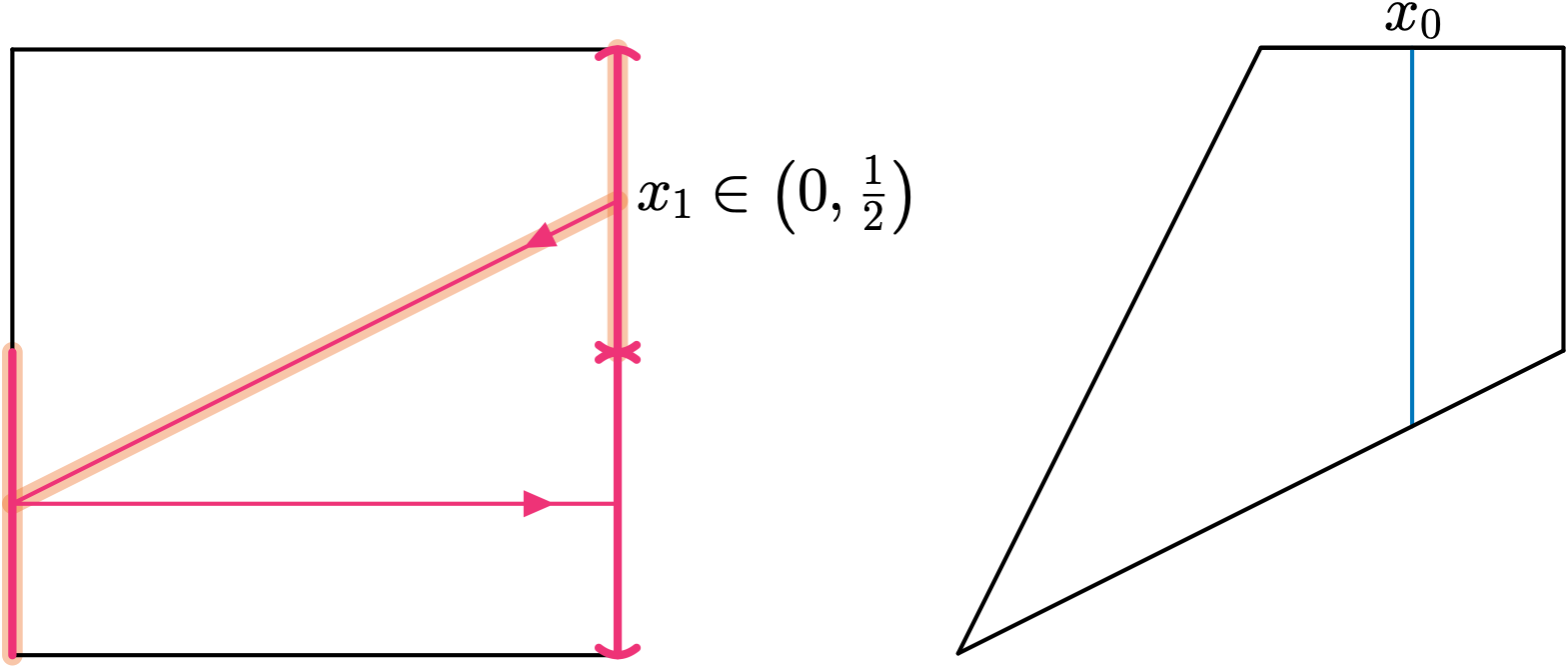}
        \label{fig:sub1}
    \end{subfigure}\\[1ex]
    
    \begin{subfigure}{\linewidth}
        \centering
        \includegraphics[width=0.65\linewidth]{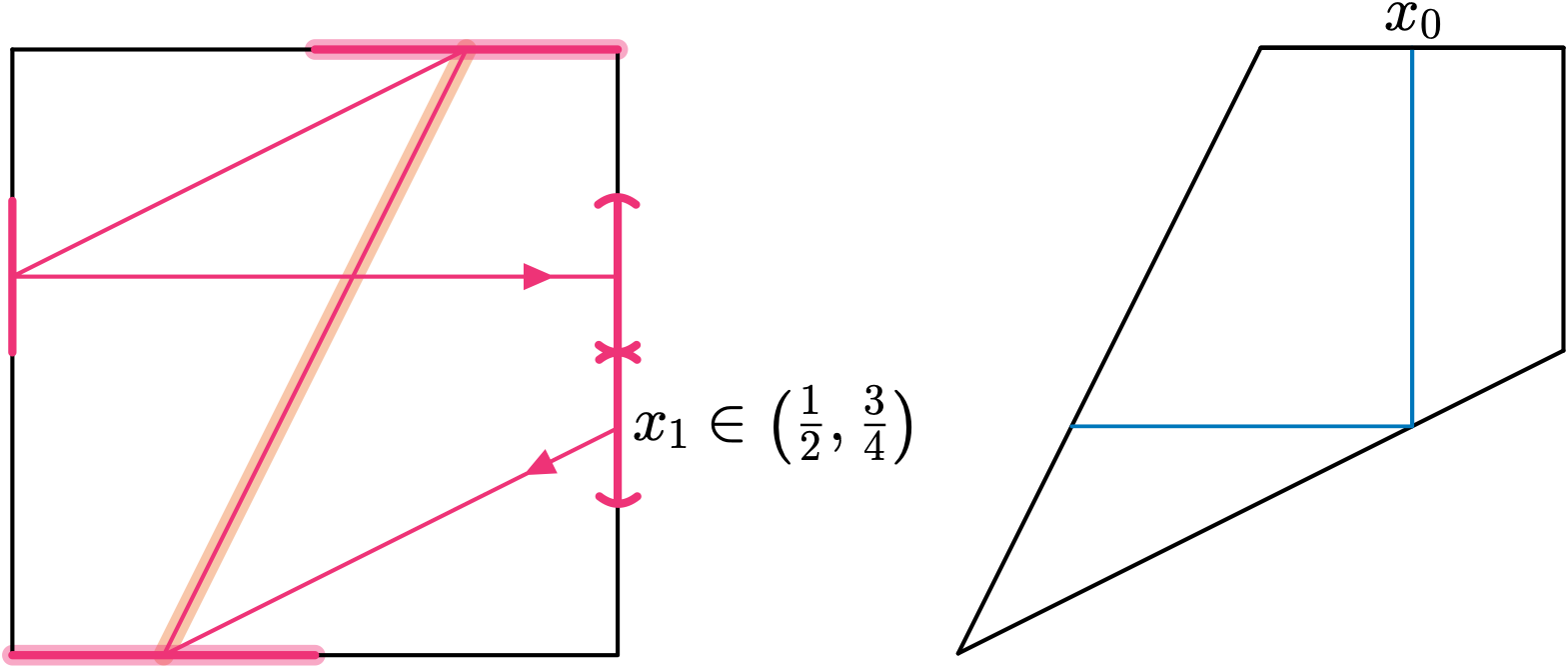}
        \label{fig:sub2}
    \end{subfigure}\\[1ex]

    \begin{subfigure}{\linewidth}
        \centering
        \includegraphics[width=0.65\linewidth]{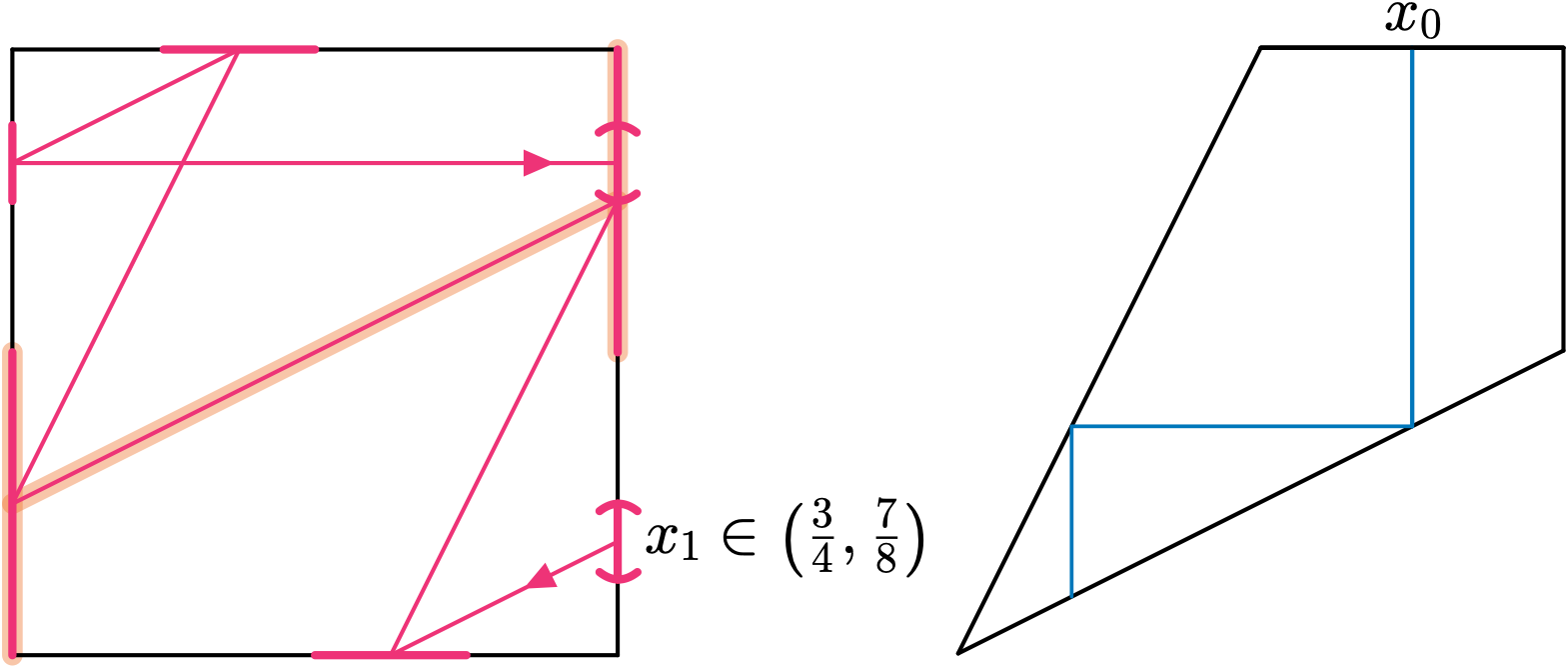}
        \label{fig:sub3}
    \end{subfigure}\\[1ex]
    
    \begin{subfigure}{\linewidth}
        \centering
        \includegraphics[width=0.65\linewidth]{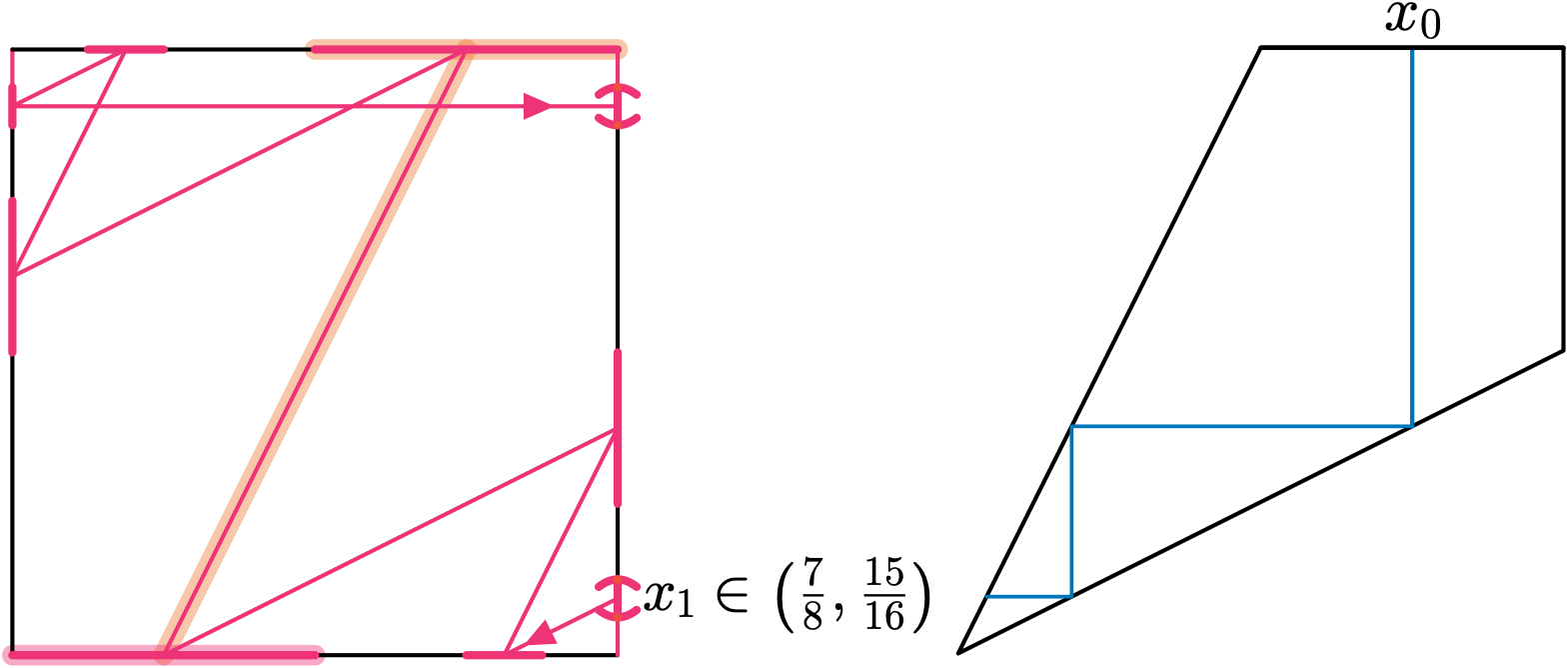}
        \label{fig:sub4}
    \end{subfigure}
    
    \caption{The map is the von Neumann-Kakutani transformation.}
    \label{fig:square_kite_map_description_without_2_adics}
\end{figure}

As explained above the even trajectory $(x_{2k})$ will move along a staircase (starting going down). With the given initial conditions the even trajectory will ``turn around'' precisely at $x_{2\ell}$ and move up until $x_{4\ell}=x_0$. Also as explained above, the points $x_{2\ell-1}$ and $x_{2\ell+1}$ are both on a vertical side (for $\ell$ odd) or a horizontal side (for $\ell$ even) of the square. The points $x_1, x_3, \ldots, x_{2\ell-1}$ and $x_{2\ell+1},\ldots, x_{4\ell-1}$ alternate between vertical and horizontal sides. Finally $x_{4\ell-1}$ and $x_{4\ell+1}$ are both on the vertical side and the map is $(x_0=x,x_1)\mapsto (x_{4\ell}=x,x_{4\ell+1})$.

As for the odd trajectories, the interval $(\frac{2^{\ell-1}-1}{2^{\ell-1}},\frac{2^\ell-1}{2^\ell})$ containing $(x_0,x_1)$ is mapped by the symplectic billiard map alternatingly $(\ell-1)$-times from a vertical to a horizontal resp.~a horizontal to a vertical side of the square. These respective maps are projections along lines with slope $\frac12$ resp.~$2$ (the slopes of the sides of the kite). Each projection is an orientation reversing map of the form $s\mapsto -2s+s_0$.
In fact, the initial interval $(\frac{2^{\ell-1}-1}{2^{\ell-1}},\frac{2^\ell-1}{2^\ell})$ is, after $(\ell-1)$ steps, enlarged to an interval of length $\frac12$  by this process. This interval of length $\tfrac12$ is either contained in a horizontal or vertical side of the square, depending on the parity of $\ell$. Next, this interval is mapped to the opposite side simply by ``a flip'', see the highlighted part in Figure \ref{fig:square_kite_map_description_without_2_adics}. This map is an orientation reversing isometry. This is the iteration step where the even trajectory ``turns around'', see above. Now, the entire process is inverted, that is, the interval of length $\frac12$ is mapped alternatingly $(\ell-1)$-times from a vertical to a horizontal resp.~a horizontal to a vertical side of the square by projections but now with the slopes interchanged. I.e.~the interval is shrinking (and is being shifted). After another $(\ell-1)$ steps the interval is shrunk back to size $\frac{1}{2^\ell}$ and lies on the vertical side $v_2v_3$ of the square. Finally, the even trajectory returns to $x_{4\ell}=x_0$ and the interval is mapped over horizontally to the vertical side $v_0v_1$. This is again an orientation reversing isometry. Therefore, the map of $(\frac{2^{\ell-1}-1}{2^{\ell-1}},\frac{2^\ell-1}{2^\ell})\subset \{x\}\times v_0v_1$ is the composition of $2(\ell-1)+2=2\ell$ 
 orientation reversing maps, forms an isometry and is thus of the form $s\mapsto s+s_\ell$. Following for instance the mid-point in $(\frac{2^{\ell-1}-1}{2^{\ell-1}},\frac{2^\ell-1}{2^\ell})$ through this process shows 
\begin{equation}\nonumber
s_\ell=1-\frac{2^{\ell-1}-1}{2^{\ell-1}}-\frac{2^\ell-1}{2^\ell}.
\end{equation}
This completes the argument.
\end{proof}

\section{Pairs of smooth, strictly convex tables}\label{section:smooth_existence}

In the previous section we exhibited an example of two polygonal convex tables so that the symplectic billiard map has no periodic orbits at all. In contrast, in this section we will show that for two smooth, strictly convex tables, there always exists a periodic orbit on the corresponding symplectic billiard map. In fact, for each $k \ge 2 $ there exists a $2k$-periodic orbit. However, we do not exclude that these orbits are multiple covers, i.e.~if $k$ is divisible by $p$ then the orbit might be the $\tfrac{k}{p}$-fold iteration of some $p$-periodic orbit. We also remind the reader that in the two table setting periodic orbits necessarily have even period.

Let us describe the set-up. We consider two smooth embedded loops $\gamma_-$ and $\gamma_+$ in the plane each bounding a strictly convex domain. It is convenient but not necessary for visualization to assume that these domains are disjoint. For a pair of points $(x_0,x_1) \in \gamma_\pm \times \gamma_\mp$ with distinct tangent directions, i.e.~$\det(\nu_{x_0},\nu_{x_1}) \neq 0$, we define a new point $x_2$ as the intersection of the curve $\gamma_\pm$ with the line $x_0 + T_{x_1}\gamma_\mp$ other than $x_0$. Since $\gamma_\pm$ are strictly convex the point $x_2$ is uniquely defined. The  symplectic billiard map is then by definition $\phi(x_0,x_1) = (x_1,x_2)$. Then $\phi$ maps the phase space
$$
\P(\gamma_-,\gamma_+) := \{(x,y)\in (\gamma_-\times \gamma_+) \sqcup (\gamma_+ \times \gamma_-) | \det(\nu_x,\nu_y) \neq 0\} 
$$
into itself. Thus, we may iterate $\phi$, speak of orbits, trajectories etc., as in the polygonal case. Moreover, due to convexity the symplectic billiard map is reversible on the entire phase space. We are now ready to state the analog of \cite[Theorem 11]{Albers_Tabachnikov_Introducing_symplectic_billiards} concerning the existence of periodic orbits. 

\begin{theorem}\label{theorem:smooth_existence}
    Let $\gamma_-$ and $\gamma_+$ be strictly convex, smooth loops in the plane. Then for each $k \ge 2$ there exists a $2k$-periodic symplectic billiard trajectory on the phase space $\P(\gamma_-,\gamma_+)$.
\end{theorem}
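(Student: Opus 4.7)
The plan is to adapt the variational argument of \cite[Theorem 11]{Albers_Tabachnikov_Introducing_symplectic_billiards} to the two-table setting, exploiting the generating-function viewpoint from Remark \ref{remark:variational_principle_and_invariant_area_form}. Fix $k\geq 2$ and parametrize $\gamma_\pm$ as smooth embeddings $S^1\to\R^2$. Let $X_k$ denote the space of $2k$-tuples $(x_0,\ldots,x_{2k-1})$ with $x_{2i}\in\gamma_+$ and $x_{2i+1}\in\gamma_-$, constrained so that the even subsequence wraps monotonically once around $\gamma_+$ and the odd subsequence wraps monotonically once around $\gamma_-$. Modulo the cyclic shift by two, the closure $\overline{X_k}$, in which adjacent same-curve vertices may coincide, is compact.

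On $\overline{X_k}$ define
$$
A(x_0,\ldots,x_{2k-1}) \;=\; \tfrac{1}{2}\sum_{i=0}^{2k-1}\omega(x_i,x_{i+1}),
$$
the signed symplectic area of the closed alternating polygon. By compactness $A$ attains a maximum. Assuming the maximum lies in the interior of $X_k$, varying each $x_i$ along its curve and applying the Lagrange multiplier computation from Remark \ref{remark:variational_principle_and_invariant_area_form} shows that for every $i$ (indices mod $2k$) the chord $x_{i-1}x_{i+1}$ is parallel to the tangent of the ambient curve at $x_i$; equivalently $\phi(x_{i-1},x_i)=(x_i,x_{i+1})$, so the cyclic orbit $\big((x_j,x_{j+1})\big)_{j\in\Z}$ is $2k$-periodic.

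The main obstacle is to rule out that the maximum lies on the stratum where two adjacent same-curve vertices have collapsed. A Birkhoff-type splitting argument is the natural tool: if $x_0=x_2\in\gamma_+$ at the maximum, splitting $x_0$ and $x_2$ apart by a small parameter $t>0$ along $\gamma_+$ changes $A$, to leading order in $t$, by the signed area of the thin triangle with opposite vertex $x_1\in\gamma_-$. Using strict convexity of $\gamma_+$ together with the fact that $\gamma_+$ lies locally on one side of its tangent line at $x_0$, an appropriate choice of splitting direction renders this change positive, contradicting maximality. The analogous argument excludes colliding odd vertices.

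Finally we must verify that the critical configuration actually lies in $\P(\gamma_-,\gamma_+)$, i.e.~$\det(\nu_{x_i},\nu_{x_{i+1}})\neq 0$ for all $i$. If $\nu_{x_i}\parallel\nu_{x_{i+1}}$ with, say, $x_i\in\gamma_\pm$ and $x_{i+1}\in\gamma_\mp$, then the criticality at $x_i$ forces the chord through $x_{i-1},x_{i+1}\in\gamma_\mp$ to be parallel to $T_{x_{i+1}}\gamma_\mp$; since the tangent line to a strictly convex closed curve meets that curve only at the point of tangency, this forces $x_{i-1}=x_{i+1}$, contradicting the interiority established above. Hence the interior maximizer produces the desired $2k$-periodic symplectic billiard orbit.
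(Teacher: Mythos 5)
Your overall strategy (maximize the signed area $\sum\omega(x_i,x_{i+1})$ over closed alternating $2k$-gons and read off the reflection law from the first-variation formula) is the right one, and your last step ruling out $\det(\nu_{x_i},\nu_{x_{i+1}})=0$ via strict convexity is fine. However, you take a Birkhoff-style route -- restricting to monotone configurations and excluding boundary maxima by a local splitting perturbation -- and it is exactly this splitting step, which you only sketch, that contains a genuine gap. Suppose $x_0=x_2=p\in\gamma_+$ at a maximizer and split $x_0=p+a$, $x_2=p+b$ with $a,b$ tangent to $\gamma_+$ at $p$ to first order. The change in $2A$ is
\begin{equation}\nonumber
\omega(x_{2k-1},a)+\omega(a,x_1)+\omega(x_1,b)+\omega(b,x_3)
=\omega(a,\,x_1-x_{2k-1})+\omega(x_1-x_3,\,b),
\end{equation}
which is \emph{first order} in the splitting parameter and involves the neighbouring vertices $x_{2k-1}$ and $x_3$; it is not the signed area of the thin triangle $x_0x_1x_2$ (that triangle's area is itself only one of several competing first-order terms). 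Writing $a=-s\tau$, $b=s'\tau$ with $s,s'\ge0$ for the order-preserving splittings and using the stationarity of the coincident direction (which forces $\omega(\tau,x_3-x_{2k-1})=0$), the first-order change is $-(s+s')\,\omega(\tau,x_1-x_{2k-1})$, whose sign is not controlled by convexity alone. So the perturbation you describe does not visibly increase $A$, and the collision case is not excluded; this is the crux of the argument, not a routine verification.

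The paper avoids this difficulty entirely by working on the unconstrained compact manifold $(\gamma_-\times\gamma_+)^k$ (no monotonicity) and replacing the local splitting argument by a global comparison: one shows that $\max f_k>\max f_{k-1}$ by appending a suitable pair $(c,d)$ to a maximizer of $f_{k-1}$ (after translating one curve so that the two relevant endpoints coincide, reducing the inequality to finding a triangle $bcd$ of positive oriented area). Then, at a maximizer of $f_k$, a backtracking $w_{j+1}=w_{j-1}$ would make the two terms containing $w_j$ cancel and force $\max f_k\le\max f_{k-1}$, a contradiction. If you want to keep your constrained-maximization route you must either carry out the second-order (curvature) analysis in the degenerate case and control the sign of the first-order term in general, or switch to the paper's $f_k$ versus $f_{k-1}$ comparison, which sidesteps the boundary analysis altogether.
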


\begin{remark}
As mentioned above we do not exclude that the $2k$-periodic orbits are multiple covers. Restricting to $2p$, $p$ prime, still leads to infinitely many periodic orbits.

We point out that if we find a $2k$-periodic orbit starting in $\P(\gamma_-,\gamma_+)\cap(\gamma_- \times \gamma_+)$ then after shifting indices by one along this periodic orbit we obtain a $2k$-periodic orbit starting in $\P(\gamma_-,\gamma_+)\cap(\gamma_+ \times \gamma_-)$. 
\end{remark}

The proof of Theorem \ref{theorem:smooth_existence} is basically the same as the one given in \cite{Albers_Tabachnikov_Introducing_symplectic_billiards}. For the reader's convenience we provide the argument here. Consider the function 
\begin{equation}\nonumber
\begin{aligned}
f_k : (\gamma_- \times \gamma_+)^k &\to \R\\
(z_1,\ldots,z_{2k}) &\mapsto \sum_{i=1}^{2k} \omega(z_i,z_{i+1})
\end{aligned}
\end{equation}
where we still read indices cyclically and $\omega$ is the standard symplectic form on the plane $\R^2$. The idea is that critical points of $f_k$ are periodic orbits up to ``back-tracking''. It turns out that there is no back-tracking for a maximum / minimum of $f_k$ and this gives us the sought after periodic orbit. 

\begin{lemma}\label{lemma:critical_point_shoelace_formula}
A point $(z_1,\ldots,z_{2k})\in(\gamma_- \times \gamma_+)^k$ is a critical point of $f_k$ if and only if
\begin{equation}\nonumber
z_{j+1}-z_{j-1} \in T_{z_j}\gamma_\pm
\end{equation}
holds for all $j=1,\ldots, 2k$. 
\end{lemma}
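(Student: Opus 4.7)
The plan is to compute the differential of $f_k$ directly and extract the critical point condition via non-degeneracy of the symplectic form $\omega$ on $\R^2$. The key observation is that $f_k$ is a cyclic sum in which each variable $z_j$ appears in exactly two consecutive terms, $\omega(z_{j-1},z_j)$ and $\omega(z_j,z_{j+1})$, so the partial derivative at $z_j$ is a finite, very explicit bilinear expression.

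Concretely, first I would fix $j$ and a tangent variation $\delta z_j \in T_{z_j}\gamma_\pm$ (where the sign depends on the parity of $j$, since the $z_j$ alternate between $\gamma_-$ and $\gamma_+$). Using bilinearity of $\omega$, the directional derivative becomes
\begin{equation*}
df_k(z_1,\ldots,z_{2k})\cdot \delta z_j = \omega(z_{j-1},\delta z_j) + \omega(\delta z_j, z_{j+1}),
\end{equation*}
and by antisymmetry of $\omega$ this simplifies to $\omega(z_{j-1}-z_{j+1},\delta z_j)$. Thus $(z_1,\ldots,z_{2k})$ is a critical point of $f_k$ if and only if
\begin{equation*}
\omega(z_{j-1}-z_{j+1},\delta z_j) = 0 \quad \text{for all } \delta z_j \in T_{z_j}\gamma_\pm \text{ and all } j=1,\ldots,2k.
\end{equation*}

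The final step is to translate this bilinear vanishing condition into the geometric condition in the statement. Since $\omega$ is the standard area form on $\R^2$, we have $\omega(v,w)=\det(v,w)$, and so $\omega(v,w)=0$ for every $w$ on a one-dimensional subspace $L\subset \R^2$ is equivalent to $v\in L$ (because $\omega$ is non-degenerate and its kernel along any nonzero $w$ is $\R w$ itself). Applying this with $L = T_{z_j}\gamma_\pm$ and $v=z_{j-1}-z_{j+1}$ yields precisely $z_{j+1}-z_{j-1} \in T_{z_j}\gamma_\pm$, and conversely.

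There is no real obstacle here: the argument is a routine Lagrange-multiplier style variational computation, and the only point requiring a bit of care is bookkeeping the alternation of the $z_j$ between $\gamma_-$ and $\gamma_+$ so that the allowed variations $\delta z_j$ live in the correct tangent line. Once that is set up, bilinearity and non-degeneracy of $\omega$ do all the work.
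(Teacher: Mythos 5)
Your argument is correct and is essentially the same as the paper's: both compute the differential of $f_k$ termwise (each $z_j$ appears in exactly two summands), restrict to tangent variations $\delta z_j\in T_{z_j}\gamma_\pm$, and use bilinearity plus non-degeneracy of $\omega$ on $\R^2$ to convert $\omega(z_{j-1}-z_{j+1},\delta z_j)=0$ into $z_{j+1}-z_{j-1}\in T_{z_j}\gamma_\pm$. The only difference is that you spell out the simplification and the one-dimensional kernel argument that the paper leaves implicit.
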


\begin{proof}
The map $f_k$ extends (by the same formula) to a map $F_k:\R^{4k}\to\R$.  Computing $DF_k(z_1,\ldots,z_{2k})$ and restricting to the tangent space $T_{(z_1,\ldots,z_{2k})}(\gamma_- \times \gamma_+)^k$ immediately gives the  critical point equation
$$
\omega(v,z_{j+1}-z_{j-1})=0\quad \forall v \in T_{z_j}\gamma_\pm\quad\forall j=1,\ldots, 2k
$$
which is, since $\gamma_\pm\subset\R^2$, equivalent to 
$$z_{j+1}-z_{j-1} \in T_{z_j}\gamma_\pm$$ for all $j=1,\ldots, 2k$ .
\end{proof}

We consider the maximal value of $f_k$ attained on the compact set $(\gamma_- \times \gamma_+)^k$.

\begin{lemma}\label{lemma:maximal_value_shoelace_formula}
    The maximal value of $f_k$ is strictly larger than that of $f_{k-1}$.
\end{lemma}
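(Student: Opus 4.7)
The plan is to take a maximizer $\tilde z = (\tilde z_1, \ldots, \tilde z_{2k-2}) \in (\gamma_- \times \gamma_+)^{k-1}$ of $f_{k-1}$ (which exists by compactness of the domain) and produce an explicit point in $(\gamma_- \times \gamma_+)^k$ on which $f_k$ strictly exceeds $f_{k-1}(\tilde z)$. The construction is by \emph{vertex insertion}: pick a new pair $(a,b) \in \gamma_+ \times \gamma_-$ and consider the cyclic sequence
$$
(\tilde z_1,\, a,\, b,\, \tilde z_2,\, \tilde z_3,\, \ldots,\, \tilde z_{2k-2}) \in (\gamma_- \times \gamma_+)^k,
$$
which respects the alternation pattern because $\tilde z_1 \in \gamma_-$, $\tilde z_2 \in \gamma_+$, and the inserted pair is $a \in \gamma_+$, $b \in \gamma_-$. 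A direct bilinearity computation then telescopes the difference to
$$
f_k(\tilde z_1,a,b,\tilde z_2,\ldots) - f_{k-1}(\tilde z) = \omega(\tilde z_1,a) + \omega(a,b) + \omega(b,\tilde z_2) - \omega(\tilde z_1,\tilde z_2) = \omega(\tilde z_1 - b,\, a - \tilde z_2),
$$
which is twice the signed area of the inserted quadrilateral. The task therefore reduces to exhibiting $(a,b) \in \gamma_+ \times \gamma_-$ on which this area term is strictly positive.

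To that end, I would first pick $b \in \gamma_-$ so that the chord vector $\tilde z_1 - b$ is \emph{not} parallel to the tangent direction $T_{\tilde z_2}\gamma_+$. Such a $b$ exists because strict convexity of $\gamma_-$ prevents all of $\gamma_-$ from lying on a single straight line through $\tilde z_1$, so the set of chord directions $\{\tilde z_1 - b : b \in \gamma_-\}$ cannot be confined to a single line. With $b$ fixed, I would then vary $a$ along $\gamma_+$ near $\tilde z_2$: writing $a - \tilde z_2 = t\tau + O(t^2)$ for $\tau \neq 0$ spanning $T_{\tilde z_2}\gamma_+$, one gets
$$
\omega(\tilde z_1 - b,\, a - \tilde z_2) = t \cdot \omega(\tilde z_1 - b,\, \tau) + O(t^2),
$$
and the linear coefficient is non-zero by the choice of $b$. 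Choosing the sign of $t$ to make this positive produces a point in $(\gamma_-\times\gamma_+)^k$ with $f_k > f_{k-1}(\tilde z) = \max f_{k-1}$, and hence $\max f_k > \max f_{k-1}$.

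The main obstacle I anticipate is conceptual rather than technical: making sure the ``bulging outward'' argument works irrespective of the orientation of the maximizer $\tilde z$. The expansion above resolves this cleanly, since both signs of $t$ are available so we can always pick the one that yields a positive contribution. The remaining ingredients---compactness for the existence of a maximizer, the telescoping identity, and the bookkeeping that insertion preserves the alternating structure---are entirely routine. Note that the argument covers $k=2$ as well (where $f_1 \equiv 0$), since the only input on $\tilde z$ is that it realizes $\max f_{k-1}$.
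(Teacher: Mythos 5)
Your proof is correct, and the overall strategy is the same as the paper's: take a maximizer of $f_{k-1}$ (compactness), insert a new pair of points respecting the alternation, and show the resulting increment of $f_k$ can be made strictly positive. The difference lies in how positivity of the increment is achieved. The paper appends the new pair $(c,d)$ at the end, then exploits the invariance of $f_k$ under translating $\gamma_-$ alone to arrange $w_1=w_{2k-2}$, which collapses the four-term increment to the oriented area of a single triangle $bcd$ with $b$ lying on both (translated) curves; it then chooses $c$ and $d$ by a transversality argument. You instead factor the increment directly as $\omega(\tilde z_1-b,\,a-\tilde z_2)$ (the cross product of the two diagonals of the inserted quadrilateral) and make it positive by a first-order perturbation of $a$ along $\gamma_+$ near $\tilde z_2$, after choosing $b$ so that the chord $\tilde z_1-b$ is transverse to $T_{\tilde z_2}\gamma_+$ (possible since a strictly convex closed curve is not contained in a line). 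Your route avoids the translation-normalization step entirely and uses only smoothness of $\gamma_+$ at one point plus non-degeneracy of one chord; the paper's route is slightly more geometric in flavor and makes the ``gain a triangle of positive area'' picture explicit. Both cover $k=2$ since the only input on the maximizer is that it realizes $\max f_{k-1}$. I see no gap in your argument.
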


\begin{proof}
Let $f_{k-1}$ attain its maximal value at $(w_1,\ldots,w_{2k-2})\in(\gamma_- \times \gamma_+)^{k-1}$. We claim that we can find a point $(c,d) \in \gamma_- \times \gamma_+$ such that $f_k(w_1,\ldots,w_{2k-2},c,d) > f_{k-1} (w_1,\ldots,w_{2k-2})$. This clearly will prove the Lemma.

For notational convenience we set $a = w_1$ and $ b = w_{2k-2}$. Rearranging the desired inequality gives
\begin{equation}\nonumber
\begin{aligned}
0 &< f_k(a,w_2,\ldots,w_{2k-3},b,c,d)- f_{k-1} (a,w_2,\ldots,w_{2k-3},b) \\
&=\omega(b,c) + \omega(c,d) + \omega(d,a) - \omega(b,a).
\end{aligned}
\end{equation}
We point out that changing the relative position of $\gamma_-$ and $\gamma_+$ does not change the value of $f_k$. Indeed, moving $\gamma_-$ to $\gamma_-+v$ for some $v\in\R^2$ and keeping $\gamma_+$ fixed leads to
\begin{equation}\nonumber
\begin{aligned}
f_k(w_1+v,&w_2,w_3+v,w_4,\ldots,w_{2k-1}+v,w_{2k})\\
&=f_k(w_1,\ldots,w_{2k})+\omega(v,w_2)+\omega(w_2,v)+\ldots+\omega(v,w_{2k})+\omega(w_{2k},v)\\
&=f_k(w_1,\ldots,w_{2k}).
\end{aligned}
\end{equation}
In particular, we may move $\gamma_-$ such that $\gamma_-\ni a=w_1=w_{2k-2}=b\in\gamma_+$. Thus, now the task is to find $c,d$ such that
\begin{equation}\nonumber
\omega(b,c) + \omega(c,d) + \omega(d,b)>0,
\end{equation}
that is, we reduced the problem to finding $(c,d) \in \gamma_- \times \gamma_+$ such that the oriented area of the triangle $bcd$ is positive where $a=b\in\gamma_-\cap\gamma_+$ is given.

Since $\gamma_-$ is strictly convex it is certainly not contained in $T_b\gamma_+$. In particular, we may choose $c\in\gamma_-\setminus T_b\gamma_+$ such that the line through $a=b$ and $c$ is transverse to $\gamma_+$ at $b=a$, i.e.
$$
a + \R(c-a) \pitchfork T_b\gamma_+.
$$
Now choose $d\in\gamma_+$ in the half-space w.r.t.~the line $a+\R(c-a)$ making the oriented area of the triangle $bcd$ positive. This finishes the proof of the Lemma.
\end{proof}

Now, we are in the position to prove Theorem \ref{theorem:smooth_existence}.
    
\begin{proof}[Proof of Theorem \ref{theorem:smooth_existence}]
We begin by rephrasing the condition of being a symplectic billiard trajectory in terms of the function $f_k$. We claim that a point $(z_1,\ldots,z_{2k}) \in (\gamma_- \times \gamma_+)^k$ corresponds to a $2k$-periodic symplectic billiard trajectory on the phase space if and only if $z_{i+1} - z_{i-1} \in T_{z_i}\gamma_\pm$ and $z_{i+1} \neq z_{i-1}$ for all $i$ where we again read indices cyclically. Indeed, using strict convexity of $\gamma_-$ and $\gamma_+$ these conditions imply $T_{z_i}\gamma_\pm \neq T_{z_{i+1}}\gamma_\mp$, i.e.~$\det(\nu_{z_i},\nu_{z_{i+1}}) \neq 0$, in particular, $(z_i,z_{i+1})\in\P(\gamma_-,\gamma_+)$. Thus, Lemma \ref{lemma:critical_point_shoelace_formula} implies that the $2k$-periodic symplectic billiard trajectories are precisely the critical points of the function $f_k : (\gamma_- \times \gamma_+)^k \to \R$ which satisfy the additional condition $z_{i+1} \neq z_{i-1}$ for all $i$. 

Now, let $(w_1,\ldots,w_{2k})$ be a point at which $f_k$ attains its maximal value on the smooth closed manifold $(\gamma_- \times \gamma_+)^k$. In particular, $(w_1,\ldots,w_{2k})$ is a critical point of $f_k$. We claim that $w_{i+1} \neq w_{i-1}$ for all $i$. Assume for a contradiction that $w_{j+1} = w_{j-1}$ for some $j$. For notational convenience we assume $j\neq1,2k-1,2k$. Then \begin{equation}\nonumber
\begin{aligned}
f_k(w_1,\ldots,w_{2k}) &= f_{k-1}(w_1,\ldots, w_{j-1}, w_{j+2},\ldots,w_{2k}) +\omega(w_{j-1},w_j)+\omega(w_j,w_{j+1})\\
&= f_{k-1}(w_1,\ldots, w_{j-1}, w_{j+2},\ldots,w_{2k}),
\end{aligned}
\end{equation}
that is, the terms containing $w_j$ cancel each other. This implies that the maximal value of $f_{k-1}$ is larger than or equal to that of $f_k$ which directly contradicts Lemma \ref{lemma:maximal_value_shoelace_formula}. Therefore, the maximum $(w_1,\ldots,w_{2k})$ is indeed a critical point of $f_k$ with $w_{i+1} \neq w_{i-1}$ for all $i$ and thus a $2k$-periodic symplectic billiard trajectory. This proves the Theorem.
\end{proof}

\begin{remark}
The above proof barely used that we study symplectic billiards in the plane and should look almost identical for two smooth, strictly convex, closed hypersurfaces in $\R^{2n}$, as in \cite{Albers_Tabachnikov_Introducing_symplectic_billiards}. We leave the two table perspective on symplectic billiards in higher dimensions for future work. 
\end{remark}

\begin{figure}[ht]
    \centering
    \includegraphics[width=.6\linewidth]{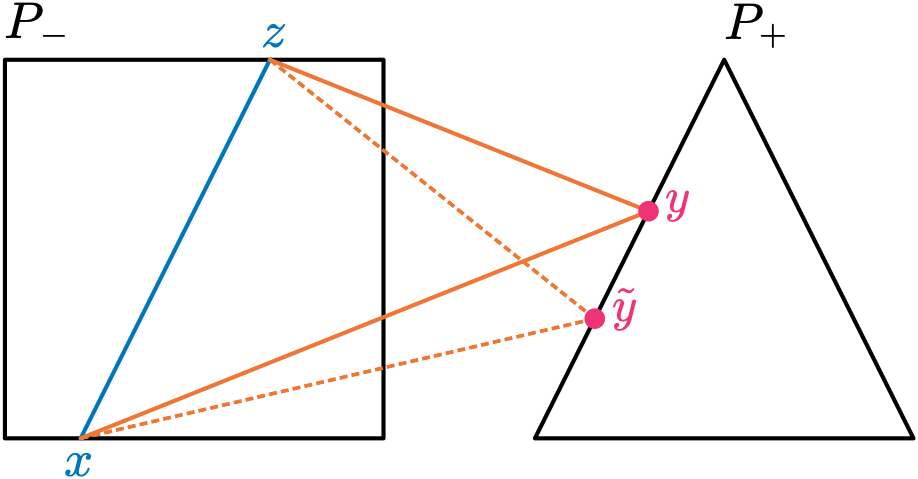}
    \caption{The triangles $xyz$ and $x\tilde yz$ have the same area since $x-z$ is parallel to the side on which $y$ and $\tilde y$ lie.}
    \label{fig:variational_principle}
\end{figure}

\begin{remark} \label{remark:variational_principle_and_invariant_area_form}
Similar to \cite{Albers_Tabachnikov_Introducing_symplectic_billiards} we give a variational characterization of the symplectic billiard rule. This also informs the definition of the functions $f_k$ from above. We recall this variational characterization here and denote for notational convenience by $S:=\omega:\R^2\times\R^2\to\R$ the standard symplectic form $\omega$ on $\R^2$.  For $(x,y)\in \P_\mathrm{max}$ we claim that $\phi(x,y)=(y,z)$ is equivalent to
$$
\frac{d}{dy}\Big|_{y\in P_\mp}\Big(S(x,y) + S(y,z)\Big) = 0.
$$
Indeed, $S(x,\tilde y) + S(\tilde y,z)$ equals twice the area of the triangle $x\tilde yz$, see Figure \ref{fig:variational_principle}. 

I.e.~the gradient with respect to $\tilde y$ of this area is perpendicular to $z-x$. By the principle of Lagrange multipliers we conclude that $\frac{d}{dy}\Big|_{y\in P_\mp}\Big(S(x,y) + S(y,z)\Big) = 0$ if and only if $z-x$ is parallel to the edge of $P_\mp$ containing $y$. Hence, we may call $S$ a generating function for the symplectic billiard map $\phi$. The same argument works in the smooth case.
\end{remark}

\bibliographystyle{amsalpha}
\bibliography{bibliography.bib}

\end{document}